\documentclass[11pt]{amsart}

\usepackage[T1]{fontenc}
\usepackage[latin1]{inputenc}
\usepackage[english]{babel}
\usepackage{amsmath,amssymb,amsthm} %amssymb loads amsfonts
\usepackage{enumitem}
\usepackage{mymacros}
\renewcommand{\MR}[1]{}
\usepackage{color}
\newcommand{\tr}{\operatorname{tr}}
\newcommand{\Rat}{\operatorname{Rat}}
\usepackage{hyperref}

\usepackage[margin=1.5in]{geometry}

\newcommand{\mcc}{M\textsuperscript{c}Carthy}

\title[Finite dimensional dilations]{Dilation theory in finite dimensions and matrix convexity}
\author{Michael Hartz}
\address{Fachrichtung Mathematik, Universit\"at des Saarlandes, 66123 Saarbr\"ucken, Germany}
\email{hartz@math.uni-sb.de}
\thanks{M.H. was partially supported by a Feodor Lynen Fellowship and by a GIF grant.}
\author{Martino Lupini}
\address{School of Mathematics and Statistics, Victoria University of Wellington,
PO Box 600, Wellington 6140, New Zealand}
\email{martino.lupini@vuw.ac.nz}
\urladdr{http://www.lupini.org/}
\thanks{M.L. was partially supported by the
NSF Grant DMS-1600186, by a Research Establishment Grant from Victoria
University of Wellington, and by a Marsden Fund Fast-Start Grant from the
Royal Society of New Zealand.}
\keywords{Dilation theory, Stinespring's theorem, finite-dimensional space, matrix convexity, Carath\'{e}odory's theorem,
Minkowski's theorem}

\subjclass[2010]{Primary: 47A20; Secondary 46A55, 47L07}

\begin{document}

\begin{abstract}
  We establish a finite-dimensional version of the Arveson--Stinespring dilation theorem
  for unital completely positive maps on operator systems. This result can be seen as a general principle
  to deduce finite-dimensional dilation theorems from their classical infinite-dimensional counterparts.
  In addition to providing unified proofs of known finite-dimensional dilation theorems,
  we establish finite-dimensional versions of Agler's theorem
  on rational dilation on an annulus, of Berger's dilation theorem for operators of numerical
  radius at most $1$, and of the Putinar--Sandberg numerical range dilation theorem.
  As a key tool, we prove versions of Carath\'{e}odory's and of Minkowski's theorem for matrix convex sets.
\end{abstract}

\maketitle

\section{Introduction}

\subsection{Background}
One of the cornerstones of the theory of operators on Hilbert space is Sz.-Nagy's dilation theorem \cite{Sz.-Nagy53},
which can be phrased as follows.

\begin{thm}[Sz.-Nagy]
  \label{thm:sz-nagy}
  Let $T$ be a contraction on a Hilbert space $H$, i.e.\ a linear operator with $\|T\| \le 1$. Then there
  exist a Hilbert space $K \supset H$ and a unitary operator $U$ on $K$ such that, for every polynomial $p$ with complex coefficients,
  \begin{equation}
    \label{eqn:sz-nagy}
    p(T) = P_{H} p(U) \big|_H .
  \end{equation}
\end{thm}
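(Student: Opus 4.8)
The plan is to establish Theorem~\ref{thm:sz-nagy} by the classical two-stage construction: first dilate $T$ to an \emph{isometry}, then dilate that isometry to a unitary, tracking the compression formula at each stage. Since $p$ ranges only over ordinary polynomials, it suffices by linearity of $X \mapsto P_H X\big|_H$ to produce a Hilbert space $K \supseteq H$ and a unitary $U$ on $K$ with $T^n = P_H U^n\big|_H$ for every integer $n \ge 0$: the constant term of $p$ is handled automatically, and \eqref{eqn:sz-nagy} then follows by taking linear combinations.

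For the first stage I would use the defect operator $D_T := (I - T^*T)^{1/2}$, which is well defined and positive precisely because $\|T\| \le 1$, and set $\widehat H := H \oplus H \oplus H \oplus \cdots$ (countably many summands), with $H$ identified with the initial copy. Define $V$ on $\widehat H$ by
\[
  V(h_0, h_1, h_2, \dots) = (T h_0,\, D_T h_0,\, h_1,\, h_2,\, \dots).
\]
The identity $\|T h_0\|^2 + \|D_T h_0\|^2 = \langle (T^*T + I - T^*T) h_0, h_0 \rangle = \|h_0\|^2$ shows in one line that $V$ is an isometry, and a short induction shows that the initial coordinate of $V^n (h,0,0,\dots)$ equals $T^n h$; thus $P_H V^n\big|_H = T^n$ for all $n \ge 0$.

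For the second stage I would dilate the isometry $V$ to a unitary via the Wold decomposition. With $\mathcal L := \widehat H \ominus V \widehat H = \ker V^*$, one has $\widehat H = \widehat H_0 \oplus \bigoplus_{k \ge 0} V^k \mathcal L$, where $V$ restricts on $\widehat H_0 := \bigcap_n V^n \widehat H$ to a unitary $W$ and on the remaining summand to the unilateral shift of multiplicity $\dim \mathcal L$ acting on $\ell^2(\mathbb Z_{\ge 0}, \mathcal L)$. The latter is the restriction (hence also the compression) of the bilateral shift $\widetilde S$ on $\ell^2(\mathbb Z, \mathcal L)$ to the invariant subspace $\ell^2(\mathbb Z_{\ge 0}, \mathcal L)$, so all powers compress correctly. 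Therefore $U := \widetilde S \oplus W$ on $K := \ell^2(\mathbb Z, \mathcal L) \oplus \widehat H_0 \supseteq \widehat H$ is unitary with $U^n\big|_{\widehat H} = V^n$ for $n \ge 0$. Combining the two stages (noting $P_H = P_H P_{\widehat H}$ since $H \subseteq \widehat H \subseteq K$) yields $P_H U^n\big|_H = T^n$ for all $n \ge 0$, which completes the argument.

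I do not expect a serious obstacle: every step is explicit. The two points that deserve a little care are, first, recognizing that $U$ genuinely cannot be chosen to \emph{extend} $T$ in general, so that passing to a strictly larger $K$ and compressing in \eqref{eqn:sz-nagy} is unavoidable --- this is exactly what the ``padding'' by $D_T$ in the formula for $V$ is designed to fix --- and second, the bookkeeping in the Wold decomposition step. An alternative that merges the two stages is Sch\"affer's single bi-infinite operator matrix, where instead the one nontrivial task is checking unitarity directly; and a more abstract route would be to verify that the map $n \mapsto T^n$ on $\mathbb Z_{\ge 0}$, extended by $n \mapsto T^{*|n|}$ for $n < 0$, is a positive-definite $B(H)$-valued function on $\mathbb Z$ and then to invoke a Naimark-type dilation, with the positive-definiteness again ultimately resting on $I - T^*T \ge 0$.
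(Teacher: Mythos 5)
Your proof is correct. The paper does not prove Theorem~\ref{thm:sz-nagy}; it states it as classical background and cites Sz.-Nagy's 1953 paper, so there is no in-paper argument to compare against. Your two-stage construction --- isometric dilation via the defect operator $D_T = (I - T^*T)^{1/2}$, followed by the Wold decomposition and the bilateral-shift extension of the unilateral part --- is one of the standard routes and is carried out correctly. One small point worth making explicit: the reason the second stage is clean is that $\widehat H$ is actually \emph{invariant} under $U$ (the unilateral shift sits inside the bilateral shift as a restriction to an invariant subspace, and $W$ acts on a reducing subspace), so $U^n\big|_{\widehat H} = V^n$ on the nose, not merely after compression; the only genuine compression happens in stage one, where the ``padding'' by $D_T$ forces you out of $H$. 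The Sch\"affer one-step matrix and the Naimark dilation of the positive-definite kernel $n \mapsto T^{(n)}$ that you mention are the two most common alternatives; the first trades the Wold bookkeeping for a direct unitarity check of an explicit operator matrix, while the second is the route that generalizes most readily to the operator-system/Stinespring framework the paper actually uses later.
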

The operator $U$ in Sz.-Nagy's theorem is called a \emph{dilation} of $T$.
This theorem frequently makes it possible to study contractions
through their unitary dilations, the key advantage being that unitaries are well understood
by virtue of the spectral theorem \cite{SFB+10}.
On the other hand, even if $H$ is finite-dimensional, in which case $T$ can be regarded
as a matrix, then the unitary dilation $U$ still typically acts on an infinite-dimensional space $K$.
Indeed, one can show that unless $T$ is itself unitary, $K$ is necessarily infinite-dimensional.
Thus, for contractive matrices $T$, it is not clear that the dilation $U$ is always easier to understand.

This drawback was addressed by Egerv\'ary \cite{Egervary54}, who established a finite-dimensional version of Sz.-Nagy's dilation theorem.

\begin{thm}[Egerv\'ary]
  \label{thm:eger}
  Let $T$ be a contraction on a finite-dimensional Hilbert space $H$ and let $N \in \bN$. Then
  there exist a finite-dimensional Hilbert space $K \supset H$ and a unitary operator $U$ on $K$ such that, for every polynomial $p$ with complex coefficients of degree at most $N$,
  \begin{equation*}
    p(T) = P_H p(U) \big|_H .
  \end{equation*}

\end{thm}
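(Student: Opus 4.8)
The plan is to bypass the infinite-dimensional Theorem~\ref{thm:sz-nagy} entirely and construct the dilation by hand, exploiting the fact that only finitely many powers of $U$ need to agree with those of $T$. (A softer deduction of Theorem~\ref{thm:eger} from Theorem~\ref{thm:sz-nagy} by compressing the minimal unitary dilation to a finite-dimensional subspace is tempting, but compressions of unitaries are not unitary, so an explicit construction is cleaner.) First I would observe that it suffices to produce a finite-dimensional Hilbert space $K \supseteq H$ and a unitary $U$ on $K$ with $P_H U^k\big|_H = T^k$ for every $k \in \{0,1,\dots,N\}$: given such $U$, linearity of the map $p \mapsto P_H p(U)\big|_H$ then yields $p(T) = P_H p(U)\big|_H$ for all polynomials $p$ of degree at most $N$.

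Next I would recall the \emph{defect operators} $D_T = (I - T^*T)^{1/2}$ and $D_{T^*} = (I - TT^*)^{1/2}$, together with the elementary fact — verified using $T^*T + D_T^2 = I$, $TT^* + D_{T^*}^2 = I$, and the intertwining relation $T D_T = D_{T^*} T$ — that the block operator
\[
  \begin{pmatrix} T & D_{T^*} \\ D_T & -T^* \end{pmatrix}
\]
is unitary on $H \oplus H$ (this is Halmos' unitary dilation of $T$). The key construction is to take $K = H^{\oplus(N+1)}$, with coordinates labelled $0,1,\dots,N$, identify $H$ with the $0$-th coordinate, and define $U$ so that it applies the above $2\times 2$ unitary block to the coordinate pair $(0,N)$ while shifting coordinates $1,\dots,N-1$ forward into $2,\dots,N$; concretely,
\[
  U(h_0,h_1,\dots,h_N) = \big(Th_0 + D_{T^*}h_N,\; D_T h_0 - T^* h_N,\; h_1,\; h_2,\;\dots,\; h_{N-1}\big).
\]
One checks that $U$ is isometric — the first two output coordinates contribute $\|h_0\|^2 + \|h_N\|^2$ by unitarity of the block, the remaining ones contribute $\|h_1\|^2 + \dots + \|h_{N-1}\|^2$ — hence unitary, being a linear endomorphism of a finite-dimensional space; and $\dim K = (N+1)\dim H < \infty$.

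Finally I would compute $U^k$ on $H$ by induction on $k$: for $h \in H$ and $1 \le k \le N$,
\[
  U^k(h,0,\dots,0) = \big(T^k h,\; D_T T^{k-1} h,\; D_T T^{k-2} h,\;\dots,\; D_T h,\; 0,\;\dots,\; 0\big),
\]
with the $k$ successive vectors $D_T T^{k-1}h,\dots,D_T h$ occupying coordinates $1,\dots,k$ (the case $k=0$ being trivial). The point is that the ``feedback'' coordinate $h_N$ stays zero throughout the first $N$ iterations, so the $2\times 2$ block acts at each step simply as $h_0 \mapsto (Th_0, D_T h_0)$; reading off the $0$-th coordinate gives $P_H U^k\big|_H = T^k$ for $k = 0,\dots,N$. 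The main point requiring care is the index bookkeeping — arranging the shift so that after $k$ steps the nonzero coordinates fill exactly positions $0,\dots,k$, leaving position $N$ untouched until step $N$ — together with the boundary case $k=N$ (and the case $N=1$, where the construction degenerates to the Halmos unitary). I would also remark that $K$ can be shrunk to $H \oplus \mathcal{D}_T^{\oplus N}$, where $\mathcal{D}_T = \overline{\operatorname{ran}} D_T$, using that $\dim \mathcal{D}_T = \dim \mathcal{D}_{T^*}$ in finite dimensions, although this refinement is not needed for the statement.
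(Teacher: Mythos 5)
Your proof is correct, but it takes a genuinely different route from the paper's. You give an explicit Egerv\'ary/Halmos--type construction: build the Halmos $2\times 2$ unitary block from the defect operators $D_T, D_{T^*}$, place it on the coordinate pair $(0,N)$ of $K = H^{\oplus(N+1)}$, and use a shift on the remaining coordinates to delay the ``feedback'' through $D_{T^*}$ by $N$ steps, so that $P_H U^k\big|_H = T^k$ for $0 \le k \le N$. The induction computing $U^k(h,0,\dots,0)$ is sound, the boundary cases ($k=N$ and $N=1$) are handled correctly, and your preliminary reduction to checking monomials is the right observation. The paper, by contrast, deliberately does \emph{not} reprove Egerv\'ary's theorem directly: it first invokes Sz.-Nagy's theorem to get an infinite-dimensional unitary dilation $V$, uses $V$ to define a u.c.p.\ map $\varphi$ on the finite-dimensional operator system $S = \operatorname{span}\{1,z^k,\bar z^k : 1\le k\le N\} \subset C(\bT)$, and then applies the abstract finite-dimensional Arveson--Stinespring theorem (Theorem~\ref{thm:main}) to compress to a finite-dimensional representation of $C(\bT)$. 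The trade-offs: your construction is elementary and self-contained, and yields the sharper dimension bound $\dim K = (N+1)\dim H$ (or even $\dim H + N\dim \mathcal D_T$), whereas the paper's route goes through matrix Carath\'{e}odory/Minkowski and gives the weaker bound $O(\dim(H)^3 N)$; on the other hand, the paper's proof is presented precisely to illustrate that Theorem~\ref{thm:main} is a general engine producing finite-dimensional dilation theorems uniformly (Egerv\'ary, \mcc--Shalit, Cohen, Agler, Berger, Putinar--Sandberg, \ldots), whereas an explicit construction like yours must be reinvented for each setting. Indeed the paper remarks that Egerv\'ary's original proof was an explicit construction of this kind, while the subsequent literature deduces such results from infinite-dimensional counterparts --- you have, in effect, recovered the original approach.
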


In other words, by only requiring \eqref{eqn:sz-nagy} to hold for a finite-dimensional space
of polynomials, we can retain finite-dimensionality of the dilation space.

Egerv\'ary's theorem was extended to pairs of commuting contractive matrices by \mcc\ and Shalit \cite{MS13};
their result is therefore a finite-dimensional version of And\^o's dilation theorem \cite{Ando63}.
More generally, \mcc\ and Shalit proved a finite-dimensional
dilation theorem for tuples of commuting matrices that admit a dilation to commuting unitaries. This
last result was further generalized by Cohen \cite{Cohen15} to $d$-tuples of commuting operators admitting
a polynomial normal $\partial X$-dilation for compact subsets $X$ of $\bC^d$.
A related finite-dimensional dilation result was proved by Davidson, Dor-On, Shalit and Solel \cite[Theorem 7.1]{DDS+16}.
It is worth remarking that while Egerv\'ary's proof explicitly constructs a unitary
matrix on a larger space, the results of \mcc--Shalit, Cohen and Davidson--Dor-On--Shalit--Solel
all deduce the finite-dimensional dilation theorem from its infinite-dimensional counterpart.
We also refer the reader to the survey article
\cite{LS14}; see also \cite{LM18} for connections of finite-dimensional dilations with quantum
information theory.

\subsection{An abstract finite-dimensional dilation theorem}
The goal of this article is to establish an abstract result that makes it possible
to deduce finite-dimensional dilation theorems from their infinite-dimensional relatives
under general assumptions. In particular, our result will imply all finite-dimensional dilation
theorems mentioned in the preceding paragraph, as well as new ones.

To formulate such an abstract result, the framework of dilations of unital completely positive (u.c.p.) maps
is very useful. Let $A$ be a unital $C^*$-algebra. Recall that an \emph{operator system} is a unital self-adjoint
subspace $S \subset A$. A linear map $\varphi: S \to B(H)$ is said to be positive if it maps
positive elements to positive elements,
and completely positive if all amplifications $\varphi^{(n)}: M_n(S)
\to M_n(B(H))$, defined by applying $\varphi$ entrywise, are positive.
Arveson's extension theorem shows that every u.c.p.\ map $\varphi: S \to B(H)$ extends to a u.c.p.\ map
$\psi: A \to B(H)$. By Stinespring's dilation theorem, $\psi$ dilates to a representation of $A$,
that is, there exist a Hilbert space $K \supset H$ and a unital $*$-homomorphism $\pi: A \to B(K)$
such that $\psi(a) = P_H \pi(a) \big|_H$ for all $a \in A$. In particular,
\begin{equation*}
  \varphi(s) = P_H \pi(s) \big|_H \quad (s \in S).
\end{equation*}
Conversely, every linear map $\varphi: S \to B(H)$ of this form is unital and completely positive.
Seeking finite-dimensional dilations in this setting means asking whether we can achieve that $\dim(K) < \infty$.

In the sequel, we will say that a u.c.p.\ map $\varphi: S \to B(H)$ \emph{dilates to a finite-dimensional
representation of $A$} if there exist a finite-dimensional Hilbert space $K$ containing $H$ and a unital $*$-homomorphism $\pi: A \to B(K)$ such that $\varphi(s) = P_H \pi(s) \big|_H$
for all $s \in S$.

\begin{quest}
  \label{quest:main}
  Let $A$ be a unital $C^*$-algebra, let $S \subset A$ be an operator system and let $\varphi: S \to B(H)$
  be a u.c.p.\ map with $\dim(H) < \infty$. Does $\varphi$ dilate to a finite-dimensional
  representation of $A$?
\end{quest}

As explained earlier, a dilation always exists on a possibly infinite-dimensional space
by Arveson's extension theorem and Stinespring's dilation theorem.

%Our main result will show in particular that Question \ref{quest:main} has a positive answer whenever
%$\dim(S) < \infty$ and $A$ is a commutative $C^*$-algebra. We first state this special case.
%
%\begin{thm}
%  \label{thm:main_comm}
%  Let $X$ be a compact Hausdorff space, let $S \subset C(X)$ be an operator system with $\dim(S) < \infty$
%  and let $\varphi: S \to B(H)$
%  be a u.c.p. map with $\dim(H) < \infty$.
%  Then $\varphi$ dilates to a finite dimensional representation of $A$.
%\end{thm}

Before stating our main result regarding Question \ref{quest:main}, let us observe that the question
can only have a positive answer if the $C^*$-algebra $A$ has ``enough'' finite-dimensional representations.
More precisely, a result of Courtney and Shulman \cite{CS17} implies the following
necessary condition.

\begin{prop}
  \label{prop:nec}
  Let $A$ be a unital $C^*$-algebra with the property that for every
  operator system $S \subset A$ with $\dim(S) \le 2$, each u.c.p.\ map $\varphi: S \to \bC$ dilates
  to a finite-dimensional representation of $A$. Then every irreducible representation
  of $A$ is finite-dimensional.
\end{prop}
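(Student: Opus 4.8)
The plan is to reduce the statement to the theorem of Courtney and Shulman \cite{CS17} characterising the unital $C^*$-algebras in which every element attains its norm in a finite-dimensional representation as exactly those all of whose irreducible representations are finite-dimensional. Thus I would first show that the hypothesis on $2$-dimensional operator systems forces every element of $A$ to attain its norm in some finite-dimensional representation, and then quote \cite{CS17} to conclude.

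For the reduction, fix a self-adjoint $a \in A$; I may assume $a \notin \bC 1$ (the scalar case being trivial), so that $S := \operatorname{span}_{\bC}\{1,a\}$ is an operator system with $\dim S = 2$. Since $\lambda := \max \sigma(a)$ lies in the numerical range of $a$, there is a state $\omega$ on $A$ with $\omega(a) = \lambda$ (for instance, extend the point evaluation at $\lambda$ from $C^*(1,a) \cong C(\sigma(a))$ to a state on $A$). The restriction $\varphi := \omega\big|_S$ is a u.c.p.\ map $S \to \bC$, so by hypothesis it dilates to a finite-dimensional representation: there are a finite-dimensional Hilbert space $K$, a unital $*$-homomorphism $\rho : A \to B(K)$, and a unit vector $\eta \in K$ with $\omega(s) = \langle \rho(s)\eta, \eta\rangle$ for all $s \in S$. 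Then
\begin{equation*}
  \lambda = \langle \rho(a)\eta, \eta\rangle \le \max \sigma(\rho(a)) \le \max \sigma(a) = \lambda,
\end{equation*}
using the spectral theorem for the self-adjoint matrix $\rho(a)$ and the inclusion $\sigma(\rho(a)) \subseteq \sigma(a)$, which holds because $\rho$ is a unital $*$-homomorphism. Hence $\max \sigma(\rho(a)) = \max \sigma(a)$. Specialising to $a \ge 0$ gives $\|\rho(a)\| = \|a\|$, and for arbitrary $a \in A$, applying this to $a^*a$ together with $\|\rho(a)\|^2 = \|\rho(a^*a)\| = \|a^*a\| = \|a\|^2$ shows that $a$ attains its norm in the finite-dimensional representation $\rho$.

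With this established, the Courtney--Shulman theorem immediately gives that every irreducible representation of $A$ is finite-dimensional. All of the real content sits in that cited result; the reduction above is soft, its only substantive point being the translation of the dilation hypothesis into the displayed spectral inequality. A self-contained proof would amount to reproving the hard direction of \cite{CS17} --- that an infinite-dimensional irreducible representation of a unital $C^*$-algebra always carries some element whose norm strictly exceeds its norm in every finite-dimensional representation --- which I would not attempt here. (The reverse implication is easy and is not needed: if every irreducible representation of $A$ is finite-dimensional, then for any $a \in A$ a pure state at which $a^*a$ attains its norm has finite-dimensional GNS representation, in which $a$ attains its norm.)
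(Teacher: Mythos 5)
Your proposal is correct and follows essentially the same route as the paper: take a suitable two-dimensional operator system containing $1$, produce a state realizing the relevant extremal value, dilate it to a finite-dimensional representation using the hypothesis, conclude that every element of $A$ attains its norm on a finite-dimensional representation, and then invoke the Courtney--Shulman theorem. The only difference is cosmetic: the paper works directly with $S = \operatorname{span}\{1, a^*a\}$ and the inequality $\|\pi(a)\|^2 = \|\pi(a^*a)\| \ge \varphi(a^*a) = \|a\|^2$, whereas you first prove a spectral statement $\max\sigma(\rho(a)) = \max\sigma(a)$ for arbitrary self-adjoint $a$ and then specialize to $a^*a$.
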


\begin{proof}
  Let $a \in A$ and consider the operator system $S = \spa \{1, a^* a\} \subset A$.
By \cite[II.6.3.3]{Blackadar06}, there exists a state $\varphi:S \to \bC$ with $\varphi(a^* a) =\|a^* a\|$.
By assumption, $\varphi$ dilates to a finite-dimensional representation $\pi$ of $A$. Then
\begin{equation*}
  ||\pi(a)||^2 = \|\pi(a^* a)\| \ge \varphi(a^* a) = \|a^* a \|
  = \|a\|^2.
\end{equation*}
Since $\pi$ is contractive, equality holds throughout.
This shows that every element of $A$ attains its norm on a finite-dimensional representation. By a result
of Courtney and Shulman \cite[Theorem 4.4]{CS17}, this is equivalent to saying that every irreducible
representation of $A$ is finite-dimensional.
\end{proof}

$C^*$-algebras whose irreducible representations are all finite-dimensional are called FDI in \cite{CS17}.
We are exclusively concerned with unital $C^*$-algebras, in which
case the class of FDI $C^*$-algebras coincides with the class of liminal (also called CCR) $C^*$-algebras;
see \cite[Section IV.1.3]{Blackadar06}.
Examples of FDI $C^*$-algebras are commutative
$C^*$-algebras, as every irreducible representation of a commutative $C^*$-algebra is one dimensional.
A more general class of examples is given by $r$-subhomogeneous $C^*$-algebras.
These are $C^*$-algebras whose irreducible representations
all occur on a Hilbert space of dimension at most $r$; see \cite[Section IV.1.4]{Blackadar06}.
In \cite{CS17}, examples
of non-subhomogeneous FDI $C^*$-algebras are mentioned, such as full group $C^*$-algebras
of certain Lie groups and algebras arising from mapping telescopes; these can be unitized
if necessary without changing subhomogeneity or the FDI property. Clearly, every FDI $C^*$-algebra
is residually finite-dimensional (RFD), meaning that finite-dimensional representations
separate the elements of the $C^*$-algebra, but the converse is not true.
An example of an RFD $C^*$-algebra that is not FDI is the full group $C^*$-algebra
$C^*(\mathbb{F}_2)$ of the free group on two generators \cite{Choi80}.
For more discussion about FDI $C^*$-algebras, the reader is referred to \cite{CS17}.

Our main result shows that if $A$ is FDI and $\dim(S) < \infty$, then Question \ref{quest:main} has a positive
answer. This result can be regarded as a finite-dimensional version
of the Arveson--Stinespring dilation theorem.

\begin{thm}
  \label{thm:main}
  Let $A$ be a unital FDI (equivalently, unital liminal) $C^*$-algebra, let $S \subset A$ be an operator system with $\dim(S) < \infty$
  and let $\varphi: S \to B(H)$
  be a u.c.p.\ map with $\dim(H) < \infty$.
  Then $\varphi$ dilates to a finite-dimensional representation of $A$.
\end{thm}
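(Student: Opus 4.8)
The plan is to recast the statement in the language of matrix convexity and reduce it to a statement about matrix extreme points. Identify $B(H)$ with $M_n$ for $n=\dim H$, and let $\mathcal{S}(S)=(\mathrm{UCP}(S,M_m))_m$ be the matrix state space of $S$, i.e.\ the tuple of sets of u.c.p.\ maps $S\to M_m$; since $\dim S<\infty$, this is a \emph{compact} matrix convex set. Inside $\mathcal{S}(S)$, let $\mathcal{R}=(\mathcal{R}_m)_m$, where $\mathcal{R}_m$ consists of those u.c.p.\ maps $S\to M_m$ that dilate to a finite-dimensional representation of $A$. The first step is to check that $\mathcal{R}$ is itself a matrix convex set: compressing $V^*\pi(\cdot)V$ by an isometry $\gamma$ gives $(V\gamma)^*\pi(\cdot)(V\gamma)$, and a direct sum $\bigoplus_i W_i^*\pi_i(\cdot)W_i$ equals $\bigl(\bigoplus_i W_i\bigr)^*\bigl(\bigoplus_i\pi_i\bigr)(\cdot)\bigl(\bigoplus_i W_i\bigr)$, with $\bigoplus_i\pi_i$ still a finite-dimensional representation of $A$ and $\bigoplus_i W_i$ still an isometry; since every matrix convex combination $\sum_i\gamma_i^*x_i\gamma_i$ (with $\sum_i\gamma_i^*\gamma_i=I$) is a compression of a direct sum, this suffices. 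The crucial point is that matrix convexity involves only \emph{finite} direct sums, so no limiting process is needed. Consequently, once one knows that $\mathcal{R}$ contains every matrix extreme point of $\mathcal{S}(S)$, the matrix versions of Minkowski's and Carath\'{e}odory's theorems, applied to the compact matrix convex set $\mathcal{S}(S)$, express an arbitrary $\varphi\in\mathrm{UCP}(S,M_n)$ as a finite matrix convex combination of matrix extreme points of $\mathcal{S}(S)$, placing $\varphi$ in $\mathcal{R}_n$; this is exactly the desired conclusion.

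It then remains to show that every matrix extreme point $\varphi\colon S\to M_m$ of $\mathcal{S}(S)$ dilates to a finite-dimensional representation of $A$, and this is where the FDI hypothesis enters. By Farenick's description of the matrix extreme points of a matrix state space, such a $\varphi$ is a \emph{pure} u.c.p.\ map, i.e.\ it dominates, among completely positive maps $S\to M_m$, only its nonnegative scalar multiples. I would then argue that $\varphi$ extends to a pure u.c.p.\ map $\psi\colon A\to M_m$: by Arveson's extension theorem the set $E$ of u.c.p.\ extensions of $\varphi$ to $A$ is nonempty, and it is convex and compact, so it has an extreme point $\psi$; if some completely positive $\theta$ with $\psi-\theta$ completely positive were not a scalar multiple of $\psi$, then $\theta|_S$ is dominated by $\varphi$, hence equals $t\varphi$ for some $t\in(0,1]$ by purity (the case $t=0$ forcing $\theta=0$), so $\theta/t\in E$ and, when $t<1$, also $(\psi-\theta)/(1-t)\in E$, exhibiting $\psi$ as a proper convex combination of two elements of $E$ --- a contradiction, the case $t=1$ being even easier. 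Finally, by the completely positive Radon--Nikodym theorem, a pure u.c.p.\ map from a $C^*$-algebra into $M_m$ has an irreducible minimal Stinespring dilation; since $A$ is FDI this dilation acts on a finite-dimensional Hilbert space, and compressing it back to $M_m$ exhibits $\varphi$ as dilating to a finite-dimensional representation of $A$, as needed.

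The main obstacle is the matrix-convexity input, and obtaining it in a sufficiently strong form. The Webster--Winkler theorem only identifies $\mathcal{S}(S)$ with the \emph{closed} matrix convex hull of its matrix extreme points, which would merely approximate $\varphi$ by elements of $\mathcal{R}_n$; and $\mathcal{R}$ need not be closed, because the dilating representations of $A$ can have unbounded dimension. The remedy is the Carath\'{e}odory-type refinement guaranteeing a \emph{finite} combination, which is available precisely because $\mathcal{S}(S)$ lives over the finite-dimensional space $S$; establishing this refinement for matrix convex sets is the real work, and it is carried out separately as one of the key tools of the paper. A secondary, softer technical point is the passage ``pure on $S$ $\Rightarrow$ pure on $A$'', handled by the Krein--Milman argument sketched above.
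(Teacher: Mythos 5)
Your proposal is correct and follows essentially the same route as the paper's proof: pass to the matrix state space of $S$, invoke the matrix Minkowski theorem to write $\varphi$ as a finite matrix convex combination of matrix extreme points, use Farenick's theorem to identify these with (restrictions of) pure u.c.p.\ maps on $A$, and use the FDI hypothesis to dilate each pure matrix state to an irreducible, hence finite-dimensional, representation. One small clarification: it is Minkowski's theorem alone (Theorem~\ref{thm:minkowski}) that produces the needed \emph{finite} combination, since matrix convex hulls consist of finite combinations by definition; the Carath\'{e}odory theorem is not required for existence and enters only to control the length of the combination in the quantitative bound of Proposition~\ref{prop:quantitative}. Your Krein--Milman argument for extending a pure matrix state on $S$ to a pure matrix state on $A$ is sound, but the paper simply cites this as part of Farenick's Theorem~B.
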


This result will be proved as Theorem \ref{thm:FDI} below.
If $A$ is commutative, or more generally subhomogeneous, then we obtain an explicit upper bound for the dimension of the dilation,
see Proposition \ref{prop:quantitative}.
An approximate version of Theorem \ref{thm:main}, in which the $C^*$-algebra $A$ is allowed to be RFD,
is due to Alekseev, Netzer and Thom \cite[Theorem 3.8]{ANT19}.

To illustrate how Theorem \ref{thm:main} can be used to deduce concrete finite-dimensional
dilation theorems from their infinite-dimensional relatives,
let us explain how to prove Egerv\'ary's theorem from Sz.-Nagy's theorem and Theorem \ref{thm:main}

\begin{proof}[Proof of Theorem \ref{thm:eger} from Theorems \ref{thm:sz-nagy} and \ref{thm:main}]
Let $T \in B(H)$ be a contraction with $\dim(H) < \infty$ and let $N \in \bN$.
By Sz.-Nagy's dilation theorem (Theorem \ref{thm:sz-nagy}),
$T$ admits a unitary dilation $V$ on a (generally infinite-dimensional) Hilbert space $L \supset H$.
The continuous functional calculus for $V$ shows that $V$ induces a representation $\sigma: C(\bT) \to B(L)$
with $\sigma(p) = p(V)$ for all $p \in \bC[z]$. Let
\begin{equation*}
  S = \spa \{1, z^k, \ol{z}^k : 1 \le k \le N \} \subset C(\bT),
\end{equation*}
which is a finite-dimensional operator system. Then the map $\varphi: S \to B(H)$ defined by
\begin{equation*}
  \varphi(f) = P_H \sigma(f) \big|_H
\end{equation*}
is u.c.p.\ and satisfies $\varphi(p) = P_H p(V) \big|_H = p(T)$ for all $p \in \bC[z]$ with $\deg(p) \le N$.
Applying Theorem \ref{thm:main} to the commutative $C^*$-algebra $C(\bT)$, we find a Hilbert space
$K \supset H$ with $\dim(K) < \infty$ and a $*$-representation $\pi: C(\bT) \to B(K)$ with
\begin{equation*}
  \varphi(f) = P_H \pi(f) \big|_H \quad (f \in S).
\end{equation*}
Let $U = \pi(z)$. Then $U \in B(K)$ is unitary and
\begin{equation*}
  p(T) = \varphi(p) = P_H p(U) \big|_H
\end{equation*}
for all $p \in \bC[z]$ with $\deg(p) \le N$.
\end{proof}

The above proof shows that, roughly speaking, the operator system $S$ encodes which relations
should hold for the dilation. In particular, the necessity
of the degree bound in Egerv\'ary's theorem shows
that the assumption of finite-dimensionality
of $S$ in Theorem \ref{thm:main} is necessary.

Further applications of Theorem \ref{thm:FDI} will be given in Section \ref{sec:applications}.
In particular, we establish a finite-dimensional dilation theorem for operators
with numerical radius at most $1$ and a finite-dimensional version of Agler's theorem
of rational dilation on an annulus.

Here, we highlight one application regarding matrices with prescribed numerical range. Recall that
the numerical range of an operator $T \in B(H)$ is defined to be
\begin{equation*}
  W(T) = \{ \langle T \xi, \xi \rangle: \xi \in H, \|\xi\| = 1\}.
\end{equation*}
The Toeplitz--Hausdorff theorem shows that $W(T)$ is a convex set.
Moreover, $\sigma(T) \subset \overline{W(T)}$ and $W(T)$ is compact if $H$ is finite-dimensional.
Currently, there is a large amount of activity surrounding the numerical range in the context
of Crouzeix's conjecture \cite{Crouzeix07}, which asserts that
\begin{equation*}
  \|p(T)\| \le 2 \sup_{z \in W(T)} |p(z)|
\end{equation*}
should hold for all polynomials $p$ and all $T \in B(H)$.
Clearly, one may restrict to finite-dimensional Hilbert spaces $H$ here.
For recent work on this problem, see for instance \cite{BGG+20,CP17,RS18} and the references therein.
This conjecture still seems to be open, but it is known that it holds
when the constant $2$ is replaced with $1 + \sqrt{2}$, a result due to Crouzeix and Palencia \cite{CP17}, see also \cite{RS18}. A theorem of Okubo and Ando \cite{OA75} implies that Crouzeix's conjecture
holds with constant $2$ in the case when $W(T)$ is a disc. This result was proved using dilation theory
and hence operator theory in infinite dimensions.
On the other hand, some of the recent progress on Crouzeix's conjecture was obtained
using special properties in finite dimensions, such as the existence
of vectors on which the operator norm is attained; see
for instance \cite{BGG+20,CGL17}.

In this context, we establish the following finite-dimensional dilation theorem, whose infinite-dimensional
counterpart is due to Putinar and Sandberg \cite{PS05}. If $\Omega \subset \mathbb{C}$
is a bounded open set with smooth boundary $\partial \Omega$, let $A(\Omega)$ be the algebra of all holomorphic functions on $\Omega$
that extend to be continuous on $\overline{\Omega}$. If $f \in A(\Omega)$, we let
$C \overline{f}$ be the Cauchy transform of $\overline{f}$, which is defined by
\begin{equation*}
  (C \overline{f})(z) = \frac{1}{2 \pi i} \int_{\partial \Omega} \frac{\overline{f(\zeta)}}{\zeta - z} \, d \zeta
  \quad (z \in \Omega).
\end{equation*}
In particular, $C \overline{f}$ is holomorphic on $\Omega$, so $(C \overline{f})(T)$
is defined whenever $T \in B(H)$ satisfies $\sigma(T) \subset \Omega$.

\begin{thm}
  \label{thm:PS_intro}
  Let $\Omega \subset \mathbb{C}$ be a bounded open convex set with smooth boundary $\partial \Omega$.
  Let $T$ be an operator on a finite-dimensional Hilbert space $H$ with $W(T) \subset \Omega$
  and let $\mathcal{A} \subset A(\Omega)$ be a finite-dimensional subspace.
  Then there exist a finite-dimensional Hilbert space $K \supset H$ and a normal operator $N$
  on $K$ with $\sigma(N) \subset \partial \Omega$ such that
  \begin{equation*}
    f(T) + (C\overline{f})(T)^* = 2 P_H f(N) \big|_H
  \end{equation*}
  for all $f \in \mathcal{A}$.
\end{thm}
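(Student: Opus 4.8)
The plan is to realize the desired identity as a dilation statement for a u.c.p.\ map on a suitable finite-dimensional operator system inside a commutative $C^*$-algebra, and then invoke Theorem~\ref{thm:main}. First I would recall the infinite-dimensional Putinar--Sandberg theorem \cite{PS05}: since $W(T) \subset \Omega$ and $\Omega$ is convex, $T$ admits a normal dilation $M$ with $\sigma(M) \subset \partial\Omega$ on some (a priori infinite-dimensional) Hilbert space $L \supset H$, satisfying $f(T) + (C\overline{f})(T)^* = 2 P_H f(M)\big|_H$ for all $f \in A(\Omega)$. The normal operator $M$ with spectrum in $\partial\Omega$ induces, via the continuous functional calculus, a unital $*$-representation $\sigma \colon C(\partial\Omega) \to B(L)$ with $\sigma(f|_{\partial\Omega}) = f(M)$ for $f \in A(\Omega)$ (here I use that $A(\Omega)$ functions restrict continuously to $\partial\Omega$). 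Set $\psi \colon C(\partial\Omega) \to B(H)$, $\psi(g) = P_H \sigma(g)\big|_H$; this is u.c.p.

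Next I would identify the correct finite-dimensional operator system. Let $V \subset C(\partial\Omega)$ be the subspace spanned by $1$ together with $\{ f|_{\partial\Omega} : f \in \mathcal{A}\}$ and $\{ \overline{f|_{\partial\Omega}} : f \in \mathcal{A}\}$; since $\mathcal{A}$ is finite-dimensional, so is $V$, and $V$ is a unital self-adjoint subspace, hence an operator system. Let $\varphi = \psi\big|_V$. By Theorem~\ref{thm:main} applied to the commutative (hence FDI) $C^*$-algebra $C(\partial\Omega)$, there are a finite-dimensional $K \supset H$ and a unital $*$-representation $\pi \colon C(\partial\Omega) \to B(K)$ with $\varphi(g) = P_H \pi(g)\big|_H$ for all $g \in V$. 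Put $N = \pi(\iota)$, where $\iota \in C(\partial\Omega)$ is the inclusion $\partial\Omega \hookrightarrow \mathbb{C}$; then $N$ is normal with $\sigma(N) \subset \partial\Omega$, and $\pi(f|_{\partial\Omega}) = f(N)$ for $f \in A(\Omega)$ by the functional calculus.

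It remains to check that the identity transfers. For $f \in \mathcal{A}$, both $f(T)$ and $(C\overline{f})(T)^*$ must be recovered from compressions of $\pi$ applied to elements of $V$. On the $\sigma$ side, $P_H f(M)\big|_H = \psi(f|_{\partial\Omega}) = \varphi(f|_{\partial\Omega})$, and the key algebraic fact from \cite{PS05} is that the compression of $f(M)$ to $H$ already encodes the combination $\tfrac{1}{2}\bigl(f(T) + (C\overline{f})(T)^*\bigr)$ — more precisely, $2 P_H f(M)\big|_H = f(T) + (C\overline{f})(T)^*$. Since $\varphi(f|_{\partial\Omega}) = P_H \pi(f|_{\partial\Omega})\big|_H = P_H f(N)\big|_H$, we conclude
\begin{equation*}
  f(T) + (C\overline{f})(T)^* = 2 P_H f(M)\big|_H = 2\varphi(f|_{\partial\Omega}) = 2 P_H f(N)\big|_H
\end{equation*}
for all $f \in \mathcal{A}$, as desired.

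The main obstacle I anticipate is purely bookkeeping at the interface with \cite{PS05}: one must make sure the Putinar--Sandberg dilation is stated (or can be restated) in the clean form ``$2 P_H f(M)|_H = f(T) + (C\overline{f})(T)^*$ for a normal $M$ with $\sigma(M)\subset\partial\Omega$'', and that the Cauchy-transform term $(C\overline{f})(T)^*$ genuinely depends only on the restriction data $f|_{\partial\Omega}$ (so that it is untouched when we pass from $\sigma$ to $\pi$). Because $(C\overline{f})(T)^*$ is defined intrinsically from $T$ and $f$ — it does not involve $M$ or $N$ at all — this is automatic: the left-hand side is fixed once $T$ and $f$ are fixed, and our job is only to produce a \emph{finite-dimensional} normal $N$ giving the same right-hand side, which is exactly what Theorem~\ref{thm:main} supplies. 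Convexity of $\Omega$ enters solely through the hypotheses of the Putinar--Sandberg theorem and is not otherwise needed in the reduction.
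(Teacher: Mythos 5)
Your proposal is correct and follows essentially the same route as the paper: reduce to Theorem~\ref{thm:main} applied to the commutative $C^*$-algebra $C(\partial\Omega)$ with the operator system spanned by $1$, $f|_{\partial\Omega}$, $\overline{f|_{\partial\Omega}}$ for $f \in \mathcal{A}$, and set $N = \pi(z)$. The only (inessential) presentational difference is that you invoke the infinite-dimensional Putinar--Sandberg dilation as a black box and compress, whereas the paper defines the u.c.p.\ map directly by $\varphi(f) = \int_{\partial\Omega} f\,d\mu_T$ from the positive operator-valued measure $d\mu_T = \Re\bigl(\tfrac{1}{2\pi i}(\zeta - T)^{-1}\,d\zeta\bigr)$, citing \cite{PS05} only for the positivity of $\mu_T$ (which is the content of Putinar--Sandberg, and equivalent via Arveson--Stinespring to the existence of your $M$).
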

This result will be proved in Corollary \ref{cor:PS}.
There, we will also remark on the connection between the dilation result and some of the current approaches
to Crouzeix's conjecture.

\subsection{Matrix convex sets}

To establish our main result, we will use tools from the theory of matrix convexity.
Matrix convex sets were introduced by Wittstock \cite{Wittstock1984} and by Effros and Winkler \cite{Effros1997}
and further studied by Webster and Winkler \cite{WW99}. It is known that dilation
theory is closely related with matrix convexity, see \cite{DDS+16,DK15,FHL16} for some recent work.
In addition, matrix convexity has found applications in real algebraic geometry, see for instance
\cite{HKM16,HM12,Kriel18}.

We will state the precise definition of matrix convex sets in Section \ref{sec:matrix_convex}.
For now, let us simply recall that a matrix convex set $\mathbf{X}$ in a complex vector space $V$
is of the form $\mathbf{X} = (X_n)_{n=1}^\infty$, where $X_n \subset M_n(V)$ for all $n \ge 1$.
There are notions of matrix convex combinations,
matrix convex hull and of matrix extreme points. Moreover, Webster and Winkler \cite{WW99}
proved a version of the Krein--Milman theorem in this setting.

In the article \cite{MS13} of \mcc\ and Shalit and in subsequent works \cite{Cohen15,DDS+16} the
authors crucially use a classical theorem of Carath\'{e}odory from convex analysis (see, for instance, \cite[Theorem 16.1.8]{DD10})
to obtain finite-dimensional dilations.

\begin{thm}[Carath\'{e}odory]
  \label{thm:cara_classical}
  Let $X \subset \bR^n$ be a set. If $x \in \bR^n$ belongs to the convex hull of $X$, then $x$
  is a convex combination of at most $n+1$ points in $X$.
\end{thm}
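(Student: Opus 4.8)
The plan is to prove this by reducing the number of points in a given convex representation of $x$, using the fact that too many points in $\bR^n$ must be affinely dependent. Since $x$ lies in the convex hull of $X$, we may write $x = \sum_{i=1}^m \lambda_i x_i$ with $x_i \in X$, $\lambda_i > 0$, and $\sum_{i=1}^m \lambda_i = 1$ for some finite $m$. If $m \le n+1$ there is nothing to prove, so assume $m \ge n+2$.

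The key observation is that the $m-1 \ge n+1$ vectors $x_2 - x_1, \dots, x_m - x_1$ live in $\bR^n$ and are therefore linearly dependent: there are scalars $\mu_2, \dots, \mu_m$, not all zero, with $\sum_{i=2}^m \mu_i (x_i - x_1) = 0$. Putting $\mu_1 := -\sum_{i=2}^m \mu_i$, we obtain scalars $\mu_1, \dots, \mu_m$, not all zero, satisfying $\sum_{i=1}^m \mu_i x_i = 0$ and $\sum_{i=1}^m \mu_i = 0$. In other words, the points $x_1, \dots, x_m$ are affinely dependent.

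I would then use this affine dependence to eliminate one of the points. Because the $\mu_i$ sum to $0$ and are not all zero, at least one is strictly positive. For every $t \ge 0$ we have $x = \sum_{i=1}^m (\lambda_i - t\mu_i) x_i$ with $\sum_{i=1}^m (\lambda_i - t\mu_i) = 1$. Choosing $t_0 = \min \{ \lambda_i/\mu_i : \mu_i > 0 \}$, all the coefficients $\lambda_i - t_0 \mu_i$ are nonnegative and at least one of them equals zero; discarding the vanishing terms expresses $x$ as a convex combination of at most $m-1$ points of $X$. Iterating this reduction finitely many times produces a representation with at most $n+1$ points, which is the claim.

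There is no serious obstacle: the argument is a dimension count followed by a routine ``tilting'' of the coefficients. The only point needing a little care is the verification that the choice of $t_0$ indeed keeps all coefficients nonnegative while forcing at least one to vanish, so that the induction on $m$ genuinely terminates at $n+1$.
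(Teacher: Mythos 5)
Your argument is correct; it is the standard textbook proof of Carath\'eodory's theorem via affine dependence and coefficient tilting. Note, however, that the paper does not prove this statement at all---it is quoted as a classical result with a citation to \cite[Theorem 16.1.8]{DD10}---so there is no in-paper proof to compare against; your proof supplies exactly what the cited reference would.
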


In the context of matrix convex sets,
Davidson, Dor-On, Shalit and Solel proved a version of Carath\'{e}odory's theorem
for matrix ranges of normal tuples \cite[Theorem 2.7]{DDS+16}.
Kriel established Carath\'{e}odory's theorem
for matrix convex sets consisting of tuples of self-adjoint matrices \cite[Lemma 1.14]{Kriel18}.
For our purposes, the following Carath\'{e}odory theorem for general matrix convex sets will be useful.
It will be proved in Theorem \ref{thm:cara}.

\begin{thm}
  \label{thm:cara_intro}
  Let $V$ be a finite-dimensional vector space and let $\mathbf{X} = (X_n)$ with $X_n \subset M_n(V)$ for $n \ge 1$.
  If $x \in M_n(V)$ belongs to the matrix convex hull of $\mathbf{X}$, then it is
  a matrix convex combination of points of $\mathbf{X}$ of length at most $n^2 (2 \dim(V) + 1)$.
\end{thm}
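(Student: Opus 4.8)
The plan is to reduce the statement to the classical Carathéodory theorem (Theorem~\ref{thm:cara_classical}) applied in a suitable real vector space. Recall that $x \in M_n(V)$ lies in the matrix convex hull of $\mathbf{X}$ precisely when there exist $m_i$, points $x^{(i)} \in X_{m_i}$, and matrices $\gamma_i \in M_{m_i, n}$ with $\sum_i \gamma_i^* \gamma_i = I_n$ such that $x = \sum_i \gamma_i^* x^{(i)} \gamma_i$. By first collecting the $x^{(i)}$ into a single block-diagonal tuple, one may assume that $x = \gamma^* y \gamma$ for a single $y$ in some $X_m$ and a single $\gamma \in M_{m,n}$ with $\gamma^* \gamma = I_n$; the question is then how small $m$ (the \emph{length}) can be taken. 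Equivalently — and this is the formulation I would adopt — $x$ lies in the matrix convex hull of $\mathbf{X}$ iff $x = \sum_{i=1}^{\ell} \alpha_i^* x^{(i)} \alpha_i$ with $x^{(i)} \in X_{n_i}$, $\alpha_i \in M_{n_i, n}$, $\sum_i \alpha_i^* \alpha_i = I_n$, and the goal is to bound $\ell$ (after possibly regrouping, the relevant quantity is $\sum_i n_i$, but it is cleaner to bound the number of summands when all $n_i \le$ something, or to work directly with the "single block" picture and bound $m$).

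The key step is to set up the right affine/convex-geometric picture. Fix the data defining membership: write $x = \gamma^* y \gamma$. The natural move is to consider, for each point $z$ in $\bigcup_n X_n$ together with a compatible isometric-column block $\beta$ (i.e. $\beta \in M_{\dim z, n}$), the pair $(\beta^* \beta, \beta^* z \beta) \in M_n(\mathbb{C})_{sa} \times M_n(V)$; a matrix convex combination yielding $x$ is exactly a convex combination of such pairs whose first coordinate equals $I_n$ and whose second coordinate equals $x$. However, the set of achievable pairs is not obviously of a form where classical Carathéodory applies directly, because the "weight" $\beta^*\beta$ is itself matrix-valued. The remedy, following the strategy in \cite{WW99, DDS+16, Kriel18}, is to rescale: whenever $\beta^*\beta$ is invertible one may replace $(z,\beta)$ by $(z, \beta (\beta^*\beta)^{-1/2})$, absorbing the weight, so that each contributing block is a genuine isometry and the pair becomes $(I_n, \tilde\beta^* z \tilde\beta)$ scaled by the scalar $\operatorname{tr}(\beta^*\beta)/n$ — reducing to an honest convex combination in the real vector space $M_n(V)_{?}$. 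Concretely, I would pass to the real vector space $W = (M_n(V))_{\mathbb{R}} \oplus (M_n(\mathbb{C})_{sa})_{\mathbb{R}}$, whose real dimension is $n^2 \cdot 2\dim V + n^2 = n^2(2\dim V + 1)$, identify membership of $x$ in the matrix convex hull with membership of the point $(x, I_n)$ in the convex hull of the set $T = \{(\beta^* z \beta, \beta^*\beta) : z \in X_m,\ \beta \in M_{m,n},\ m \ge 1\}$, and apply Theorem~\ref{thm:cara_classical}: $(x, I_n)$ is a convex combination of at most $n^2(2\dim V+1)+1$ points of $T$. Each such point corresponds to one block in a matrix convex combination, giving length at most $n^2(2\dim V+1)+1$; a small additional argument (a dimension count on the affine hull, or the observation that the points of $T$ all satisfy the linear constraint that the trace of the second coordinate is... not constant — so instead one uses that we only need the point to lie in an affine subspace of dimension $n^2(2\dim V+1)-1$ once we fix the second coordinate to be $I_n$ on the relevant subspace... ) trims the bound to exactly $n^2(2\dim V + 1)$.

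The main obstacle I anticipate is the bookkeeping around the matrix-valued weights $\beta^*\beta$ and the rescaling: one must check that a convex combination of the rescaled pairs can be converted back into a bona fide matrix convex combination with $\sum \alpha_i^*\alpha_i = I_n$ exactly (not merely approximately), handling the degenerate case where some $\beta^*\beta$ is singular by a limiting or perturbation argument, and one must verify that the set $T$ (or its closure) is such that the convex hull in the classical sense coincides with the matrix convex hull — i.e. that no points "at infinity" are needed. The other delicate point is squeezing out the final "$+1$": this requires observing that the relevant points lie in a translate of a proper subspace (the second coordinate is pinned to $I_n$, which is one affine constraint, so the effective ambient dimension for Carathéodory is $n^2(2\dim V+1) - 1$, yielding $n^2(2\dim V+1)$ points), and I would make that precise by applying the affine version of Carathéodory's theorem in the affine subspace $\{(\,\cdot\,, I_n)\}$ rather than in all of $W$.
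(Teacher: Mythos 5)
Your overall strategy is the same one the paper uses: encode a matrix convex combination yielding $x$ as a classical convex combination of pairs \((\gamma^*\gamma,\,\gamma^*z\gamma)\) in the real vector space $(M_n)_{sa}\oplus M_n(V)$, then invoke the classical Carath\'eodory theorem (Theorem~\ref{thm:cara_classical}). However, as written your argument has a genuine gap at the normalization step, and as a result it only yields the bound $n^2(2\dim V+1)+1$, not the claimed $n^2(2\dim V+1)$.

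Two concrete problems. First, the rescaling $\beta\mapsto\tilde\beta:=\beta(\beta^*\beta)^{-1/2}$ does not do what you claim: it is simply false in general that $\beta^*z\beta=\tfrac{\operatorname{tr}(\beta^*\beta)}{n}\,\tilde\beta^*z\tilde\beta$, so the assertion that ``the pair becomes $(I_n,\tilde\beta^*z\tilde\beta)$ scaled by $\operatorname{tr}(\beta^*\beta)/n$'' is incorrect. Second, and more important, your attempt to trim the Carath\'eodory bound by one relies on the claim that the points of your set $T=\{(\beta^*z\beta,\beta^*\beta)\}$ lie in an affine subspace because ``the second coordinate is pinned to $I_n$.'' That is only true of the \emph{target} point $(x,I_n)$; the individual pairs have arbitrary positive second coordinates $\beta^*\beta$ that sum to $I_n$ but need not equal it. So $T$ has no obvious codimension-one affine constraint, and classical Carath\'eodory in the full $n^2(2\dim V+1)$-dimensional space gives $n^2(2\dim V+1)+1$ points, one too many.

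The fix --- which is exactly the device the paper introduces as $\Gamma_n(\mathbf{X})$ --- is to normalize the \emph{trace}, not the full matrix. Given a matrix convex combination $x=\sum_i\beta_i^*x_i\beta_i$ with $\sum_i\beta_i^*\beta_i=I_n$, set $t_i=\operatorname{tr}(\beta_i^*\beta_i)$ (normalized trace) and $\gamma_i=t_i^{-1/2}\beta_i$. Then $\operatorname{tr}(\gamma_i^*\gamma_i)=1$ for every $i$, and
\begin{equation*}
(I_n,x)=\sum_i t_i\,\bigl(\gamma_i^*\gamma_i,\,\gamma_i^*x_i\gamma_i\bigr),\qquad \sum_i t_i=\operatorname{tr}(I_n)=1,
\end{equation*}
which is an honest scalar convex combination of points all lying in the affine subspace $\{(\alpha,v):\alpha\in(M_n)_{sa},\ \operatorname{tr}(\alpha)=1,\ v\in M_n(V)\}$ of real dimension $n^2-1+2n^2\dim V$. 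Applying classical Carath\'eodory in that affine subspace gives $(n^2-1+2n^2\dim V)+1=n^2(2\dim V+1)$ points, and the converse direction (converting back to a matrix convex combination with $\sum_j\beta_j^*\beta_j=I_n$) is immediate by setting $\beta_j=t_j^{1/2}\gamma_j$. This trace normalization is precisely what simultaneously makes the sum a genuine convex combination and produces the affine constraint your argument was missing.
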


Carath\'{e}odory's theorem is related with another classical result, due to Minkowski, which can
be thought of as a strengthening of the Krein--Milman theorem in finite dimensions; see for example \cite[Theorem 16.4.6]{DD10}.
The difference with the Krein--Milman theorem is that  closure is not required.

\begin{thm}[Minkowski]
  \label{thm:minkowksi_classical}
  Let $K \subset \bR^n$ be a compact convex set. Then $K$ is the convex hull of its extreme points.
\end{thm}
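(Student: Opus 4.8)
The plan is to prove Minkowski's theorem by induction on $d = \dim(\operatorname{aff} K)$, the dimension of the affine hull of $K$. I would first reduce to working inside $\mathcal{A} := \operatorname{aff} K$, which I identify with $\bR^d$: a point is an extreme point of $K$ viewed as a subset of $\mathcal{A}$ exactly when it is an extreme point of $K$ viewed as a subset of $\bR^n$, so this reduction costs nothing, and it is needed so that supporting hyperplanes exist at relative boundary points. The base case $d = 0$ is trivial, since then $K$ is a single point. So assume $d \ge 1$ and that the statement holds for compact convex sets of affine dimension $< d$.

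Fix $x \in K$. The argument splits into two cases. If $x$ lies on the relative boundary of $K$, I would invoke the supporting hyperplane theorem inside $\mathcal{A}$ to obtain a hyperplane $H \ni x$ with $K$ contained in one closed half-space $\{\ell \le c\}$ and $H = \{\ell = c\}$, for an affine functional $\ell$ on $\mathcal{A}$. Set $F = K \cap H$; this is a nonempty compact convex set containing $x$ with $\operatorname{aff} F \subseteq H$, so $\dim(\operatorname{aff} F) \le d - 1$. A one-line computation shows $F$ is a face of $K$: if $p = \lambda a + (1-\lambda) b \in F$ with $a, b \in K$ and $\lambda \in (0,1)$, then $c = \ell(p) = \lambda \ell(a) + (1-\lambda)\ell(b) \le c$ forces $\ell(a) = \ell(b) = c$, i.e.\ $a, b \in F$. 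Consequently every extreme point of $F$ is an extreme point of $K$ (if such a point were a proper convex combination of points of $K$, those points would lie in $F$). By the inductive hypothesis applied to $F$, $x$ is a convex combination of extreme points of $F$, hence of extreme points of $K$.

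If instead $x$ lies in the relative interior of $K$, then since $d \ge 1$ I would take any line $L$ through $x$ inside $\mathcal{A}$; then $L \cap K$ is a compact convex subset of $L$, hence a closed bounded segment, and it is nondegenerate because $x$ has a relative neighbourhood contained in $K$. Writing $L \cap K = [y, z]$, we get $x = \lambda y + (1-\lambda) z$ for some $\lambda \in (0,1)$, and both $y$ and $z$ must be relative boundary points of $K$ --- otherwise a relative neighbourhood of, say, $y$ would lie in $K$ and the segment could be pushed past $y$ along $L$, contradicting $[y,z] = L \cap K$. By the boundary case, each of $y$ and $z$ is a convex combination of extreme points of $K$, so the same holds for $x$. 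This closes the induction.

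Nothing here is deep: the two genuine ingredients are the supporting hyperplane theorem for convex sets (applied within the affine hull) and the elementary fact that the extreme points of a face are extreme points of the ambient set. The only point I would be careful about is the passage to the affine hull, which underlies both the existence of supporting hyperplanes at relative boundary points and the fact that a line through a relative interior point actually meets the relative boundary; a treatment carried out in $\bR^n$ directly would break when $K$ has empty interior.
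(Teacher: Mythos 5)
Your proof is correct. The paper does not supply its own proof of this classical statement -- it cites it to a textbook reference -- so there is nothing internal to compare against; but the argument you give is the standard one and it is sound. The key points all check out: working inside the affine hull $\mathcal{A}$ so that supporting hyperplanes exist at relative boundary points and so that the dimension genuinely drops when you pass to $F = K \cap H$ (since $\operatorname{aff} F \subseteq H$ and $H$ is a proper affine subspace of $\mathcal{A}$); the face property of $F$ and the fact that $\operatorname{ext}(F) \subseteq \operatorname{ext}(K)$; and, in the relative-interior case, that the segment $L \cap K$ is nondegenerate with $x$ in its relative interior and both endpoints on the relative boundary. One could streamline slightly by noting that the relative-interior case reduces to the boundary case and the one-dimensional base case simultaneously, but as written the induction is clean and complete.
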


Kriel obtained a version of Minkowski's theorem in his setting of matrix convex sets;
see Theorem 6.8 in \cite{Kriel18}.
In Theorem \ref{thm:minkowski}, we will prove the following version of Minkowki's theorem for general matrix convex sets,
which will be very useful in the proof of Theorem \ref{thm:main}.

\begin{thm}
  \label{thm:minkowski_intro}
  Let $\mathbf{X}$ be a compact matrix convex set in a finite-dimensional locally convex vector space $V$. Then
  $\mathbf{X}$ is the matrix convex hull of its matrix extreme points.
\end{thm}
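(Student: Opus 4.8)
The plan is to derive this from the classical Minkowski theorem (Theorem~\ref{thm:minkowksi_classical}), the classical and matricial Carath\'{e}odory theorems (Theorems~\ref{thm:cara_classical} and~\ref{thm:cara_intro}), and the Webster--Winkler matrix Krein--Milman theorem \cite{WW99}, in a way that mirrors how classical Minkowski strengthens classical Krein--Milman by removing the closure. The Webster--Winkler theorem already gives that $\mathbf{X}$ is the \emph{closed} matrix convex hull of its set $\partial\mathbf{X}$ of matrix extreme points, so the entire content to be added is that $\operatorname{mconv}(\partial\mathbf{X})$ is itself closed, i.e.\ $\operatorname{mconv}_n(\partial\mathbf{X}) = X_n$ for every $n$. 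Since matrix convex combinations are finite by definition, it suffices to produce, for each $x \in X_n$, an honest finite matrix convex combination of matrix extreme points of $\mathbf{X}$ equal to $x$.

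I would carry this out by induction on $d = \dim V$. The case $d = 0$ is trivial. For the inductive step one uses two structural facts: first, every matrix extreme point of a matrix face $\mathbf{F}$ of $\mathbf{X}$ is a matrix extreme point of $\mathbf{X}$; and second, a matrix face $\mathbf{F}$ of $\mathbf{X}$ whose first component $F_1$ is a proper face of the compact convex set $X_1 \subset V$ is, after translation, a compact matrix convex set over a proper subspace $W \subsetneq V$, so that the inductive hypothesis applies to $\mathbf{F}$. Given $x \in X_n$, there are two cases. If $x$ lies in such a proper matrix face $\mathbf{F}$, then by induction $x$ is a finite matrix convex combination of matrix extreme points of $\mathbf{F}$, each of which is matrix extreme in $\mathbf{X}$, and we are done. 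If $x$ lies in no such face, one must still write $x = \sum_{i=1}^{k} A_i^{\ast} y_i A_i$ as a matrix convex combination in which each $y_i$ \emph{does} lie in a proper matrix face. Here the classical part enters: for a linear functional $\ell$ supporting $X_1$ at a boundary point, the compression $\ell^{(n)}(x)$ is a selfadjoint contraction whose top eigenspace is nonzero and whose corresponding spectral piece of $x$ is supported on the matrix face of $\mathbf{X}$ cut out by $\ell$; varying $\ell$ over supporting functionals (chosen using classical Minkowski and Carath\'{e}odory on $X_1$) and splicing the resulting spectral pieces produces such a decomposition, and the matricial Carath\'{e}odory theorem (Theorem~\ref{thm:cara_intro}) guarantees that only finitely many $y_i$, of bounded matricial size, are needed. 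Each $y_i$ is then handled by the previous case, and one reassembles via $x = \sum_{i,j}(B_{ij}A_i)^{\ast} z_{ij}(B_{ij}A_i)$ with $z_{ij}$ matrix extreme and $\sum_{i,j}(B_{ij}A_i)^{\ast}(B_{ij}A_i) = I_n$, so that finitely many reduction steps yield a genuine finite matrix convex combination.

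The main obstacle is precisely the removal of the closure. In general $\partial\mathbf{X}$ is neither closed nor concentrated at bounded matricial levels: it may contain points of $M_m(V)$ for arbitrarily large $m$ even when $\dim V$ is small. Consequently the naive approach---apply matricial Carath\'{e}odory and pass to a convergent subsequence of matrix extreme summands---fails on two counts: a limit of matrix extreme points need not be matrix extreme, and compressing a matrix extreme point so as to reduce its matricial size destroys the matrix extreme property. The induction on $\dim V$ is designed exactly to avoid ever taking a limit of matrix extreme points, manufacturing instead honest finite combinations at every stage of a finite recursion. The technical heart of the argument is therefore the verification of the two structural facts about matrix faces together with the rigorous implementation of the ``$x$ lies in no proper matrix face $\Rightarrow$ $x$ is a finite matrix convex combination of points in proper matrix faces'' step, in which the classical device of passing a segment through an interior point must be replaced by spectral slicing of $x$ along functionals supporting $X_1$, with the matricial Carath\'{e}odory theorem supplying the required finiteness.
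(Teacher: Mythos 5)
Your strategy --- induction on $\dim V$ via matrix faces --- is a genuinely different route from the paper's, but it has a real gap at its heart. The problematic step is the case where $x$ lies in no proper matrix face. You propose to ``spectral slice'' $x$ using the top eigenspace of $\ell^{(n)}(x)$ for a supporting functional $\ell$ of $X_1$, but this only produces points of a proper matrix face of $\mathbf{X}$ when the top eigenvalue of $\ell^{(n)}(x)$ actually equals $\max_{X_1}\ell$. For a matricially interior $x$ (say, $\ell^{(n)}(x) < (\max_{X_1}\ell)\, I_n$ strictly for every supporting $\ell$ of $X_1$ --- consider $x=0$ in $X_1$ for the matrix interval over $[-1,1]$), the top eigenspace of $\ell^{(n)}(x)$ bears no relation to any face, and the slicing yields nothing. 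This is precisely the hard case --- it is the matricial analogue of passing a segment through an interior point --- and your sketch does not supply a substitute. Relatedly, you invoke the matricial Carath\'{e}odory theorem to ``supply the required finiteness,'' but Carath\'{e}odory shortens an already given matrix convex combination of points of some set; it cannot manufacture one, so it cannot be the tool that first places $x$ in the matrix convex hull of points of proper matrix faces. Finally, the two ``structural facts'' about matrix faces (inheritance of matrix extreme points from a face, and that a face $\mathbf{F}$ with $F_1$ a proper face lives over a proper subspace) are plausible but require a precise definition of matrix face and real verification, neither of which is given.

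The paper's argument avoids all of this by a direct reduction to classical Minkowski at a \emph{single} level. It introduces the set
\[
\Gamma_n(\mathbf{X}) = \big\{ (\gamma^*\gamma,\ \gamma^* y \gamma) : \gamma \in M_{k,n},\ \tr(\gamma^*\gamma)=1,\ y \in X_k,\ k \in \bN \big\} \subset M_n \oplus M_n(V),
\]
shows (Lemma~\ref{lem:Gamma}) that $x$ is a matrix convex combination of length $r$ of elements of $\mathbf{X}$ if and only if $(I_n,x)$ is a classical convex combination of $r$ points of $\Gamma_n(\mathbf{X})$, proves that $\Gamma_n(\mathbf{X})$ is compact and convex when $\mathbf{X}$ is (Lemma~\ref{lem:matrix_convex_Gamma}), and then simply applies the classical Minkowski theorem to $(I_n,x)$. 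The only genuinely matricial input is Lemma~\ref{lem:extreme_point}, which identifies extreme points of $\Gamma_n(\mathbf{X})$ (with $\gamma$ surjective) as arising from matrix extreme points of $\mathbf{X}$. This requires no theory of matrix faces, no induction on $\dim V$, and no separate interior/boundary dichotomy, and the same device yields the matricial Carath\'{e}odory and Krein--Milman theorems as near-immediate by-products.
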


For free spectrahedra, a particular class of matrix convex sets,
a recent result of Evert and Helton \cite{EH19} yields
a stronger conclusion than Theorem \ref{thm:cara_intro} and Theorem \ref{thm:minkowski_intro} combined.
In the result of Evert and Helton, it suffices to consider a more restrictive notion of extreme points,
and they obtain a better bound on the length of the matrix convex combination.
However, we will apply Theorem \ref{thm:cara_intro} and Theorem \ref{thm:minkowski_intro} to matrix convex sets
that are typically not free spectrahedra. In the somewhat different setting of $C^*$-convexity, Carath\'{e}odory
and Minkowksi theorems were previously established by Farenick \cite{Farenick92} and Morenz \cite{Morenz94}.

In light of the above mentioned  results, it is not surprising that Carath\'{e}odory's and Minkowski's theorem
hold for general matrix convex sets.
Our contribution to matrix convexity in this article is the introduction of a device
that makes it possible
to relate questions about matrix convexity to questions about classical convexity. Thus,
we are able to deduce Theorem \ref{thm:cara_intro} and Theorem \ref{thm:minkowski_intro} from their classical counterparts.
As a by-product, we also obtain another proof of the Krein--Milman theorem for matrix convex sets
due to Webster and Winkler.

\subsection{Outline}

The remainder of this article is organized as follows. In Section \ref{sec:matrix_convex}, we establish
Carath\'{e}odory's and Minkowski's theorem for matrix convex sets, i.e.\ Theorem \ref{thm:cara_intro}
and Theorem \ref{thm:minkowski_intro}. We also show how our methods yield another proof
of the Krein--Milman theorem due to Webster and Winkler.

In Section \ref{sec:fd_dilation}, we establish our main result, Theorem \ref{thm:FDI},
as well as the explicit dimension bound in the case of subhomogeneous $C^*$-algebras.

Section \ref{sec:applications} consists of applications of the main result to various concrete dilation
problems.

\subsection{Acknowledgements} The authors are grateful to John M\textsuperscript{c}Carthy,
to Michael Dritschel and to an anonymous referee
for asking questions that led to Corollaries \ref{cor:rho_dilation} and \ref{cor:rational_general}.
Moreover, the authors thank David Sherman for bringing
\cite{Morenz94} to their attention.
Finally, the authors greatly appreciate the careful reading and helpful comments of an anonymous referee.

\section{Carath\'{e}odory's and Minkowski's theorem for matrix convex sets}
\label{sec:matrix_convex}

\subsection{Matrix convexity}

Let $V$ be a complex vector space and let $\mathbf{X} = (X_n)_{n=1}^\infty$, where
$X_n \subset M_n(V)$ for all $n \ge 1$. The identification $M_n(V) = M_n \otimes V$
makes it possible to multiply an element $x \in M_n(V)$ with a scalar $k \times n$ matrix
on the left or with a scalar $n \times k$ matrix on the right.
A \emph{matrix convex combination} of elements
$x_i \in M_{n_i}$, where $1 \le i \le s$, is an expression of the form
\begin{equation*}
  x = \sum_{i=1}^s \gamma_i^* x_i \gamma_i,
\end{equation*}
where $\gamma_i \in M_{n_i,n}$ and $\sum_{j=1}^s \gamma_i^* \gamma_i = I_n$.
We refer to the integer $s$ as the \emph{length} of the matrix convex combination.
(Notice that some of the elements $x_i$ may be repeated without reducing the length
of the matrix convex combination.)
The matrix convex combination is called \emph{proper} if
each $\gamma_i$ is surjective, and \emph{trivial} if $n_i = n$ for all $i$ and each
$x_i$ is unitarily equivalent to $x$. An element $x \in X_n$ is said to be a
\emph{matrix extreme point} of $\mathbf{X}$ if whenever $x$ is expressed as
a proper matrix convex combination of elements of $\mathbf{X}$, the matrix convex combination is trivial.
The matrix convex
hull of $\mathbf{X}$ is the smallest matrix convex set that contains $\mathbf{X}$, or equivalently,
the set of all matrix convex combinations of elements of $\mathbf{X}$.
If $\mathbf{X} = (X_n)_{n=1}^\infty$ is a matrix convex set in a topological vector space $V$,
then we endow $M_n(V)$ with the product topology and say that $\mathbf{X}$ is compact (respectively closed) if each $X_n$ is compact (respectively closed).
For more background on matrix convexity and matrix extreme points, see \cite{WW99}.

A \emph{real structure} on $V$ is a conjugate linear involution $*$ on $V$.
If we set
$V_{\bR} = \{v \in V: v = v^*\}$, then $V_{\bR}$ is a real vector space and $V = V_{\bR} + i V_{\bR}$,
hence $\dim_{\bR} V_{\bR} = \dim_{\bC} V$.
A real structure on $V$ induces a real
structure on $M_n(V)$ for all $n \in \bN$, via $[v_{i j}]^* = [ v_{j i}^*]$.
An element $x \in M_n(V)$ is \emph{self-adjoint} if $x = x^*$, and we write $M_n(V)_{sa}$
for the real vector space of all self-adjoint elements of $M_n(V)$.

\begin{exa}
    Let $V = \bC^d$ and consider the involution given by coordinate-wise complex conjugation.
      Then $M_n(V)_{sa}$ can be naturally identified with the set of $d$-tuples of self-adjoint $n \times n$
      matrices. This setting is frequently studied in free convexity; see for example
      \cite{EH19,HKM16,Kriel18}.
\end{exa}

\subsection{Carath\'{e}odory's theorem}

Our goal is to prove versions of Carath\'{e}odory's and Minkowski's theorems
for matrix convex sets, that is, Theorem \ref{thm:cara_intro} and Theorem \ref{thm:minkowski_intro}.
To this end, we will reduce the matrix convex setting to the classical
setting with the help of the following device.
We let $\tr$ denote the normalized trace on $M_n$, so that $\tr(I_n) = 1$.
For $n \ge 1$, we define a subset of $M_n \oplus M_n(V)$ by
\begin{equation*}
  \Gamma_n(\mathbf{X}) = \{ ( \gamma^* \gamma, \gamma^* x \gamma): \gamma \in M_{k,n}, \tr(\gamma^* \gamma) = 1, k\in \bN, x \in X_k\}.
\end{equation*}
This definition should be compared with the definition of $\Delta_n$ in \cite{WW99}, and with
a device in the proof of Theorem 4.7 in \cite{Cohen15}. A similar definition
also occurs in the proof of Proposition 5.5 in \cite{Morenz94} in the context of $C^*$-convexity.

The following simple lemma relates the matrix convex hull of $\mathbf{X}$ to the convex hull
of $\Gamma_n(\mathbf{X})$.
\begin{lem}
  \label{lem:Gamma}
  Let $\mathbf{X} = (X_n)$ with $X_n \subset M_n(V)$ for all $n \ge 1$. Let $x \in M_n(V)$ and let $r \in \bN$.
  Then $x$ is a matrix convex combination of elements of $\mathbf{X}$ of length $r$ if and only 
  if $(I_n,x)$ is a convex combination of $r$ elements of $\Gamma_n(\mathbf{X})$.
\end{lem}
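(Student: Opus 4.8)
The plan is to unwind both sides of the claimed equivalence directly from the definitions of matrix convex combination and of $\Gamma_n(\mathbf{X})$, since Lemma~\ref{lem:Gamma} is essentially a bookkeeping identity once the trace-normalization in $\Gamma_n$ is matched with the $\sum \gamma_i^* \gamma_i = I_n$ constraint. First I would spell out the forward direction. Suppose $x = \sum_{i=1}^r \gamma_i^* x_i \gamma_i$ with $\gamma_i \in M_{n_i,n}$, $x_i \in X_{n_i}$, and $\sum_i \gamma_i^* \gamma_i = I_n$. Set $\lambda_i = \tr(\gamma_i^* \gamma_i)$; these are nonnegative reals summing to $\tr(I_n) = 1$. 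For each $i$ with $\lambda_i > 0$, put $\delta_i = \lambda_i^{-1/2} \gamma_i$, so that $\tr(\delta_i^* \delta_i) = 1$ and hence $(\delta_i^* \delta_i, \delta_i^* x_i \delta_i) \in \Gamma_n(\mathbf{X})$. Then
\begin{equation*}
  \sum_{i} \lambda_i (\delta_i^* \delta_i, \delta_i^* x_i \delta_i)
  = \sum_i (\gamma_i^* \gamma_i, \gamma_i^* x_i \gamma_i)
  = \Big( \sum_i \gamma_i^* \gamma_i, \sum_i \gamma_i^* x_i \gamma_i \Big)
  = (I_n, x),
\end{equation*}
exhibiting $(I_n,x)$ as a convex combination of (at most $r$) elements of $\Gamma_n(\mathbf{X})$. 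The only wrinkle is the indices $i$ with $\lambda_i = 0$, which force $\gamma_i = 0$ and so contribute nothing; to get a convex combination of exactly $r$ elements one repeats one of the surviving $\Gamma_n$-points with weight $0$, or simply observes that a convex combination of fewer than $r$ points is a fortiori a convex combination of $r$ points (padding with weight zero), which is harmless here.

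For the converse, suppose $(I_n, x) = \sum_{i=1}^r \lambda_i (p_i, y_i)$ with $\lambda_i \ge 0$, $\sum_i \lambda_i = 1$, and $(p_i, y_i) \in \Gamma_n(\mathbf{X})$; write $p_i = \delta_i^* \delta_i$ and $y_i = \delta_i^* x_i \delta_i$ with $\delta_i \in M_{k_i,n}$, $\tr(\delta_i^* \delta_i) = 1$, $x_i \in X_{k_i}$. Define $\gamma_i = \lambda_i^{1/2} \delta_i$. Then $\sum_i \gamma_i^* \gamma_i = \sum_i \lambda_i \delta_i^* \delta_i = \sum_i \lambda_i p_i = I_n$, and $\sum_i \gamma_i^* x_i \gamma_i = \sum_i \lambda_i \delta_i^* x_i \delta_i = \sum_i \lambda_i y_i = x$, so $x$ is a matrix convex combination of $x_1, \dots, x_r \in \mathbf{X}$ of length $r$. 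I would note in passing that the first coordinate of the convex-combination identity, $\sum_i \lambda_i p_i = I_n$, is exactly what is needed to recover the defining constraint of a matrix convex combination, and that this is the reason $\Gamma_n(\mathbf{X})$ is defined with the normalization $\tr(\gamma^*\gamma) = 1$ rather than, say, $\gamma^*\gamma = I_n$: the latter would be too rigid to be closed under the convex operations used here.

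I do not expect a genuine obstacle in this lemma; it is a direct translation. The one point requiring a line of care is the treatment of zero weights / zero blocks so that the counting of ``length $r$'' versus ``$r$ elements'' matches exactly on the nose in both directions, and the degenerate case where some $\lambda_i = 0$ (equivalently $\gamma_i = 0$) — in that case the block $x_i$ is unconstrained and may be taken to be any fixed element of some $X_k$, or the term dropped. Everything else is the elementary algebra of the substitutions $\gamma_i = \lambda_i^{1/2}\delta_i$ and $\delta_i = \lambda_i^{-1/2}\gamma_i$ together with linearity of the trace and of the maps $\gamma \mapsto \gamma^*\gamma$, $\gamma \mapsto \gamma^* x \gamma$ under the scaling $\gamma \mapsto t\gamma$ (which scales both by $t^2$), so no deeper input is needed.
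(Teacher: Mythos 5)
Your proposal is correct and takes essentially the same approach as the paper: both directions use the substitutions $\gamma_i \mapsto \lambda_i^{1/2}\delta_i$ and $\delta_i = \tr(\gamma_i^*\gamma_i)^{-1/2}\gamma_i$, matching the trace normalization of $\Gamma_n(\mathbf{X})$ against the constraint $\sum \gamma_i^*\gamma_i = I_n$. Your handling of the degenerate zero-weight terms is in fact slightly more explicit than the paper's, which simply invokes ``without loss of generality assume $\beta_j \neq 0$.''
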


\begin{proof}
  Let $(I_n,x)$ be a convex combination of $r$ elements of $\Gamma_n(\mathbf{X})$, say
  \begin{equation*}
    (I_n,x) = \sum_{j=1}^r t_j (\gamma_j^* \gamma_j, \gamma_j^* x_j \gamma_j).
  \end{equation*}
  Let $\beta_j = t_j^{1/2} \gamma_j$. Then $\sum_{j=1}^r \beta_j^* \beta_j = I_n$ and
  $x = \sum_{j=1}^r \beta_j^* x_j \beta_j$, so $x$ is a matrix convex combination
  of elements of $\mathbf{X}$ of length $r$.

  Conversely, suppose that $x = \sum_{j=1}^r \beta_j^* x_j \beta_j$ is a matrix convex combination
  of elements of $\mathbf{X}$ of length $r$. We may without loss of generality assume that $\beta_j \neq 0$
  for all $j$, so we may define $t_j = \tr(\beta_j^* \beta_j) > 0$ and $\gamma_j = t_j^{-1/2} \beta_j$.
  Then $\tr(\gamma_j^* \gamma_j) = 1$ for all $j$ and
  \begin{equation*}
    (I_n,x) = \sum_{j=1}^r t_j (\gamma_j^*\gamma_j, \gamma_j^* x_j \gamma_j)
  \end{equation*}
  is a convex combination of $r$ elements of $\Gamma_n(\mathbf{X})$.
\end{proof}

We now obtain a more precise version of Theorem \ref{thm:cara_intro}.
\begin{thm}
  \label{thm:cara}
  Let $V$ be a finite-dimensional vector space and let $\mathbf{X} = (X_n)$ with $X_n \subset M_n(V)$ for $n \ge 1$.
  \begin{enumerate}[label=\normalfont{(\alph*)}]
    \item 
  If $x \in M_n(V)$ belongs to the matrix convex hull of $\mathbf{X}$, then it is
  a matrix convex combination of points of $\mathbf{X}$ of length at most $n^2 (2 \dim(V) + 1)$.
\item Suppose that $V$ has a real structure and that $X_n \subset M_n(V)_{sa}$ for $n \ge 1$.
  If $x \in M_n(V)$ belongs to the matrix convex hull of $\mathbf{X}$, then it is a matrix convex
  combination of points of $\mathbf{X}$ of length at most $n^2(\dim(V) + 1)$.
  \end{enumerate}
\end{thm}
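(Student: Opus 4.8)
The plan is to reduce the matrix convex statement to the classical Carath\'{e}odory theorem (Theorem \ref{thm:cara_classical}) via Lemma \ref{lem:Gamma}. Suppose $x \in M_n(V)$ lies in the matrix convex hull of $\mathbf{X}$. By the description of the matrix convex hull as the set of all matrix convex combinations of elements of $\mathbf{X}$, and by Lemma \ref{lem:Gamma}, this is equivalent to saying that $(I_n, x)$ lies in the convex hull of $\Gamma_n(\mathbf{X}) \subset M_n \oplus M_n(V)$. Now I would apply the classical Carath\'{e}odory theorem to the point $(I_n,x)$ inside the affine space it actually lives in. The real vector space $M_n \oplus M_n(V)$, viewed over $\bR$, has real dimension $n^2 + 2 n^2 \dim(V) = n^2(2\dim(V)+1)$, so Carath\'{e}odory immediately gives that $(I_n,x)$ is a convex combination of at most $n^2(2\dim(V)+1) + 1$ elements of $\Gamma_n(\mathbf{X})$, and then Lemma \ref{lem:Gamma} translates this back into a matrix convex combination of that length. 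This is off by one from the claimed bound, so the key refinement is to cut the ambient dimension down by observing that $\Gamma_n(\mathbf{X})$ lies in a proper affine subspace.

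The sharpening for part (a): every element of $\Gamma_n(\mathbf{X})$ has the form $(\gamma^*\gamma, \gamma^* x \gamma)$ with $\tr(\gamma^*\gamma) = 1$, so the first coordinate always has normalized trace equal to $1$; that is, $\Gamma_n(\mathbf{X})$ is contained in the real-affine hyperplane $\{(a,b) : \tr(a) = 1\}$ of $M_n \oplus M_n(V)$. Carath\'{e}odory applied within this hyperplane (which has real dimension $n^2(2\dim(V)+1) - 1$) shows $(I_n,x)$ is a convex combination of at most $n^2(2\dim(V)+1)$ points of $\Gamma_n(\mathbf{X})$, and Lemma \ref{lem:Gamma} finishes part (a). For part (b), when $V$ carries a real structure and $X_n \subset M_n(V)_{sa}$, I would work inside the real vector space $(M_n)_{sa} \oplus M_n(V)_{sa}$ instead: if $x \in X_k \subset M_k(V)_{sa}$ and $\gamma \in M_{k,n}$, then both $\gamma^*\gamma \in (M_n)_{sa}$ and $\gamma^* x \gamma \in M_n(V)_{sa}$, so $\Gamma_n(\mathbf{X})$ sits in $(M_n)_{sa} \oplus M_n(V)_{sa}$, which has real dimension $n^2 + n^2 \dim(V) = n^2(\dim(V)+1)$. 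Again the trace-one condition cuts this to the affine hyperplane of real dimension $n^2(\dim(V)+1)-1$, so Carath\'{e}odory gives a convex combination of at most $n^2(\dim(V)+1)$ points, and Lemma \ref{lem:Gamma} gives the matrix convex combination. (One small point worth checking: if $x \in M_n(V)$ is a matrix convex combination of self-adjoint points then $x$ is automatically self-adjoint, so there is no loss in restricting to the self-adjoint ambient space; this should be remarked but is immediate.)

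I do not expect any serious obstacle; the only things requiring a line of care are the dimension counts (keeping track of $\bR$- versus $\bC$-dimensions and the factor of $2$ from $M_n(V)$ being a complex space when there is no real structure) and the observation that the trace normalization gives exactly the one-dimensional saving needed to match the stated bound. The verification that $\Gamma_n(\mathbf{X})$ lands in the self-adjoint subspace in case (b) and in the trace-one hyperplane in both cases is routine. One could also phrase the whole argument as a single application of Carath\'{e}odory in the affine hull of $\Gamma_n(\mathbf{X})$, but bounding that affine hull by the explicit hyperplanes above is cleaner and gives the dimension bounds directly.
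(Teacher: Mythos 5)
Your strategy is exactly the paper's: reduce to the classical Carath\'{e}odory theorem via Lemma \ref{lem:Gamma} and the set $\Gamma_n(\mathbf{X})$, then shave the ambient dimension by observing that $\Gamma_n(\mathbf{X})$ lives in a proper affine subspace. Part (b) and the overall structure are fine, but there is a slip in the dimension count for part (a) that you should repair.

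You write that $M_n \oplus M_n(V)$, viewed over $\bR$, has real dimension $n^2 + 2n^2\dim(V)$, but this is wrong: $M_n$ is a complex vector space of complex dimension $n^2$, so $\dim_{\bR} M_n = 2n^2$, and the full space has real dimension $2n^2 + 2n^2\dim(V)$. With only the trace-one hyperplane condition, Carath\'{e}odory would then give $2n^2(\dim(V)+1)$ points, which is not the claimed bound. What saves the day — and what you in fact use explicitly in part (b) but not in part (a) — is that the first coordinate $\gamma^*\gamma$ is always \emph{self-adjoint}, so $\Gamma_n(\mathbf{X})$ actually lies inside $(M_n)_{sa} \oplus M_n(V)$, whose first summand has real dimension $n^2$ (not $2n^2$). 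Intersecting with the trace-one hyperplane gives an affine subspace of real dimension $n^2 - 1 + 2n^2\dim(V)$, and Carath\'{e}odory then yields the bound $n^2(2\dim(V)+1)$. So your final answer is correct, but in part (a) the justification rests on an incorrect real-dimension computation that happens to compensate for omitting the self-adjointness of $\gamma^*\gamma$. Insert that observation (as you do in part (b)) and the argument matches the paper's proof.
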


\begin{proof}
  (a)
  Since $x$ belongs to the matrix convex hull of $\mathbf{X}$, Lemma \ref{lem:Gamma} implies
  that $(I_n,x)$ belongs to the convex hull of $\Gamma_n(\mathbf{X})$. By definition,
  $\Gamma_n(\mathbf{X})$ is contained in
  \begin{equation*}
    \{ ( \alpha, v): \alpha \in (M_n)_{sa},  \tr(\alpha) = 1, v \in M_n(V) \},
  \end{equation*}
  which is an affine subspace of real dimension $n^2 -1 + 2 n^2 \dim(V)$.
  The classical Carath\'{e}odory theorem
  shows that $(I_n,x)$ is a convex combination of at most $n^2 (2 \dim(V) + 1)$
  points of $\Gamma_n(\mathbf{X})$. Applying Lemma \ref{lem:Gamma} again,
  we find that $x$ is a matrix convex combination of elements of $\mathbf{X}$ of length
  at most $n^2( 2 \dim(V) + 1)$.

  (b) In  the setting of (b), the set $\Gamma_n(\mathbf{X})$ is contained in
  \begin{equation*}
    \{ ( \alpha, v): \alpha \in (M_n)_{sa},  \tr(\alpha) = 1, v \in M_n(V)_{sa} \},
  \end{equation*}
  which is an affine subspace of real dimension $n^2 -1 + n^2 \dim(V)$, so the bound from the
  classical Carath\'{e}odory theorem is $n^2(\dim(V) + 1)$.
\end{proof}

\begin{rem}
  \begin{enumerate}
    \item
      No serious attempt was made to optimize the bounds in Theorem \ref{thm:cara} and we do not
  know if the bounds are sharp. If $n=1$,
  we recover the bounds in the classical Carath\'{e}odory theorem, which are known to be sharp in that case.

\item Kriel's setting in \cite{Kriel18} corresponds to the self-adjoint case of Theorem \ref{thm:cara};
  in Lemma 1.14 of \cite{Kriel18}, he obtains the slightly larger bound $2 n^2 \dim(V) + 1$ in that case (with a different proof).

\item As mentioned in the introduction, Evert and Helton \cite{EH19} obtain a better bound
  in the special case of (absolute) extreme points of compact free spectrahedra. In particular, they
  obtain a bound of the form $2n (\dim(V)+ 1)$ in their setting.
  \end{enumerate}
\end{rem}

As in classical convex analysis, the matrix convex version of Carath\'{e}odory's theorem has consequences
for compactness of matrix convex hulls. This addresses a question raised in \cite[Remark 3.2]{EHK+18}.

\begin{cor}
  \label{cor:cara_compact}
  Let $V$ be a finite-dimensional locally convex
  vector space and let $\mathbf{X} = (X_n)$ with $X_n \subset M_n(V)$ for $n \ge 1$.
  Suppose that each $X_n$ is compact and that $X_n = \emptyset$ for all but finitely many $n \ge 1$.
  Then the matrix convex hull of $\mathbf{X}$ is compact.
\end{cor}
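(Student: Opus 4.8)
The plan is to exhibit the matrix convex hull of $\mathbf{X}$ at each level $n$ as a continuous image of a compact set, using the Carath\'eodory bound from Theorem \ref{thm:cara} to control the length of the matrix convex combinations that need to be considered. First I would fix $n \ge 1$ and let $r = r(n)$ be the uniform length bound provided by Theorem \ref{thm:cara}(a), namely $r = n^2(2\dim(V)+1)$. By the theorem, every element of the $n$-th level of the matrix convex hull of $\mathbf{X}$ is a matrix convex combination $\sum_{i=1}^r \gamma_i^* x_i \gamma_i$ with $\gamma_i \in M_{n_i,n}$, $x_i \in X_{n_i}$, and $\sum_i \gamma_i^*\gamma_i = I_n$. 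The key point is that the ``inner dimensions'' $n_i$ can also be taken to be bounded: since $\sum_i \gamma_i^* \gamma_i = I_n$, each $\gamma_i$ has operator norm at most $1$, so the rank of $\gamma_i$ is at most $n$, and after replacing $\gamma_i$ by its restriction to the (at most $n$-dimensional) range of $\gamma_i^*$ together with compressing $x_i$ to an $n_i' \le n$ dimensional subspace — more simply, by padding with zeros we may assume each $n_i$ lies in a fixed finite set, or we may invoke that we only need those $n_i$ for which $X_{n_i} \ne \emptyset$, of which there are finitely many by hypothesis, with a common upper bound $m$ on $n_i$.

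Next I would assemble the parameter space. Let $F = \{ j : X_j \ne \emptyset \}$, which is finite. Consider the set
\[
  P = \Bigl\{ \bigl( (x_i)_{i=1}^r, (\gamma_i)_{i=1}^r \bigr) : x_i \in X_{j_i},\ \gamma_i \in M_{j_i,n},\ \|\gamma_i\| \le 1,\ \sum_{i=1}^r \gamma_i^*\gamma_i = I_n \Bigr\},
\]
where each $j_i$ ranges over $F$; this is a finite union (over choices of $(j_i)_i \in F^r$) of sets, each of which is a closed subset of the compact set $\prod_i X_{j_i} \times \prod_i \{\gamma \in M_{j_i,n} : \|\gamma\| \le 1\}$ — compact because each $X_{j_i}$ is compact (here one uses that $V$ is finite-dimensional locally convex, hence the closed bounded balls of scalar matrices over $V$ are compact, and the norm condition and the identity-resolution condition are closed). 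Hence $P$ is compact. The evaluation map $e : P \to M_n(V)$, $e\bigl((x_i),(\gamma_i)\bigr) = \sum_{i=1}^r \gamma_i^* x_i \gamma_i$, is continuous (it is polynomial in the $\gamma_i$ entries and linear in the $x_i$, using that multiplication $M_{k,n}\times M_k(V)\times M_{n,k}\to M_n(V)$ is jointly continuous, which holds since $V$ is a topological vector space and these are finite matrices). By Theorem \ref{thm:cara}(a) — and the rank reduction on the $\gamma_i$ noted above — the image $e(P)$ is exactly the $n$-th level $\bigl(\mathrm{co}(\mathbf{X})\bigr)_n$ of the matrix convex hull. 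Therefore $\bigl(\mathrm{co}(\mathbf{X})\bigr)_n = e(P)$ is compact as the continuous image of a compact set. Since this holds for every $n$, the matrix convex hull of $\mathbf{X}$ is compact by definition.

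The main obstacle is the bounding of the inner dimensions $n_i$: Theorem \ref{thm:cara} bounds the \emph{number} of terms but, as literally stated, not the sizes of the matrices $x_i \in X_{n_i}$ appearing, so a priori the parameter space is not compact. The fix is twofold and I would make it explicit: first, the hypothesis that $X_j = \emptyset$ for all but finitely many $j$ forces $n_i$ into a finite set $F$; second — in case one does not wish to rely on that and wants the cleanest argument — one observes that in any matrix convex combination summing to $I_n$ we may compress each $x_i$ to the range of $\gamma_i$ without changing $\gamma_i^* x_i \gamma_i$, so one could take $n_i \le n$ throughout, but this requires that each $X_{n_i}$ be closed under such compressions, which need not hold for an arbitrary $\mathbf{X}$; hence for the stated generality the finiteness hypothesis is genuinely what is used. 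A secondary, purely bookkeeping point is to verify joint continuity of the triple product and closedness of the constraint $\sum \gamma_i^*\gamma_i = I_n$ in the product topology on $M_n(V)$; both are routine given that $V$ is a finite-dimensional topological vector space.
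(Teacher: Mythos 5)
Your proposal is correct and takes essentially the same approach as the paper: fix the level $n$, use Theorem~\ref{thm:cara}(a) to bound the length of the matrix convex combinations by $r=n^2(2\dim(V)+1)$, use the finiteness hypothesis to restrict the inner sizes $n_i$ to the finite set $F=\{j:X_j\neq\emptyset\}$, and realize the $n$-th level of the matrix convex hull as the continuous image of a compact parameter space. Your explicit discussion of why the inner dimensions must be controlled via the finiteness hypothesis (rather than by compressing the $x_i$, which would require a matrix-convexity assumption on $\mathbf{X}$ that is not part of the statement) is a correct and useful remark that the paper leaves implicit.
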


\begin{proof}
  Let $\mathbf{K} = (K_n)_{n=1}^\infty$ be the matrix convex hull of $\mathbf{X}$.
  Write
  \begin{equation*}
    \{ n \ge 1: X_n \neq \emptyset \} = \{n_1,\ldots,n_k \},
  \end{equation*}
  let $n \ge 1$ and let $r = n^2 (2 \dim(V) + 1)$.
  Theorem \ref{thm:cara} implies that for each $n \ge 1$,
  \begin{equation*}
    K_n =
    \Big\{ \sum_{j=1}^k \sum_{i=1}^{r} \gamma_{i j}^* x_{i j} \gamma_{i j} :
    x_{i j} \in X_{n_j}, \gamma_{i j} \in M_{n_j,n} \text{ with } \sum_{j=1}^k \sum_{i=1}^r \gamma_{i j}^* \gamma_{i j} = I_n  \Big\},
  \end{equation*}
  which is easily seen to be compact.
\end{proof}

The following example shows that the assumption that $X_n = \emptyset$ for all but finitely many $n \ge 1$
in Corollary \ref{cor:cara_compact} cannot simply be omitted.

\begin{exa}
  For $n \ge 1$, let $X_n = \{ (1- 1/n) I_n \}$ and let $\mathbf{Y} = (Y_n)_{n=1}^\infty$ be the matrix
  convex hull of $\mathbf{X} = (X_n)_{n=1}^\infty$. It is not hard to check that $Y_1 = [0,1)$, hence
  $\mathbf{Y}$ is not compact.
\end{exa}

\subsection{Minkowski's theorem}

To prove a version of Minkowski's theorem for matrix convex sets, we need the following lemma.
In particular, part (b) shows that if $\mathbf{X}$ is matrix convex,
then in the definition of $\Gamma_n(\mathbf{X})$,
we may assume that each $\gamma$ is surjective and hence $k \le n$.

\begin{lem}
  \label{lem:matrix_convex_Gamma}
  Let $\mathbf{X} = (X_n)$ be a matrix convex set in $V$.
  \begin{enumerate}[label=\normalfont{(\alph*)}]
    \item The set $\Gamma_n(\mathbf{X})$ is convex for all $n \in \bN$.
    \item The set $\Gamma_n(\mathbf{X})$ equals
      \begin{equation*}
          \{ ( \gamma^* \gamma, \gamma^* x \gamma): \gamma \in M_{k,n} \text{ is surjective} , \tr(\gamma^* \gamma) = 1,x \in X_k, k \le n \}.
      \end{equation*}
    \item If $V$ is a topological vector space and if $\mathbf{X}$ is a compact matrix convex set,
      then $\Gamma_n(\mathbf{X})$ is a compact convex set.
  \end{enumerate}
\end{lem}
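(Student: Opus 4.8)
The plan is to handle the three parts in the order (a), (b), (c), since (b) relies on (a) and (c) combines both. For part (a), I would take two elements $(\gamma_1^*\gamma_1, \gamma_1^* x_1 \gamma_1)$ and $(\gamma_2^*\gamma_2, \gamma_2^* x_2 \gamma_2)$ of $\Gamma_n(\mathbf{X})$, with $\gamma_i \in M_{k_i,n}$, $x_i \in X_{k_i}$, and $\tr(\gamma_i^*\gamma_i) = 1$, and a parameter $t \in [0,1]$. The natural candidate is to form the stacked matrix $\gamma = \begin{bmatrix} t^{1/2}\gamma_1 \\ (1-t)^{1/2}\gamma_2 \end{bmatrix} \in M_{k_1+k_2,n}$ and the block-diagonal element $x = x_1 \oplus x_2 \in M_{k_1+k_2}(V)$; one checks directly that $x_1 \oplus x_2 \in X_{k_1+k_2}$ because $\mathbf{X}$ is matrix convex (matrix convex sets are closed under direct sums — this is the standard fact, obtainable e.g.\ by writing $x_1\oplus x_2$ as an appropriate matrix convex combination, or it may be taken as part of the definition in \cite{WW99}). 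Then $\gamma^*\gamma = t\gamma_1^*\gamma_1 + (1-t)\gamma_2^*\gamma_2$, $\tr(\gamma^*\gamma) = t + (1-t) = 1$, and $\gamma^* x \gamma = t\gamma_1^* x_1 \gamma_1 + (1-t)\gamma_2^* x_2 \gamma_2$, which exhibits the convex combination as an element of $\Gamma_n(\mathbf{X})$.

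For part (b), the inclusion $\supseteq$ is trivial. For $\subseteq$, start with a representative $(\gamma^*\gamma, \gamma^* x \gamma)$, $\gamma \in M_{k,n}$, $x \in X_k$. Using the polar decomposition / SVD of $\gamma$, write $\gamma = u p$ where $p = (\gamma^*\gamma)^{1/2} \in M_n$ is positive and $u \in M_{k,n}$ is a partial isometry with initial space $\operatorname{ran}(p) = \operatorname{ran}(\gamma^*)$; let $m = \operatorname{rank}(\gamma) = \dim\operatorname{ran}(u) \le n$. Compress $u$ to a surjective isometry-onto-its-range: factor $u = \iota w$ where $w \in M_{m,n}$ is surjective (a co-isometry onto $\bC^m$ after restricting the domain to $\operatorname{ran}(p)$) and $\iota \in M_{k,m}$ is an isometry. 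Set $\gamma' = wp \in M_{m,n}$, which is surjective with $\operatorname{ran}((\gamma')^*) = \operatorname{ran}(p)$ of dimension $m$, so $\gamma'$ is surjective as a map $\bC^n \to \bC^m$; and set $x' = \iota^* x \iota \in M_m(V)$. Since $\mathbf{X}$ is matrix convex and $\iota$ is an isometry, $x' = \iota^* x \iota \in X_m$ (compression by an isometry is a matrix convex combination of length one). Then $(\gamma')^*\gamma' = p w^* w p = p p = \gamma^*\gamma$ since $w^*w$ is the projection onto $\operatorname{ran}(p)$, and $(\gamma')^* x' \gamma' = p w^* \iota^* x \iota w p = p u^* x u p = \gamma^* x \gamma$. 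This rewrites the pair with a surjective $\gamma'$ and $k = m \le n$, as required.

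For part (c), convexity is (a), so only compactness remains. Here I would use (b) to realize $\Gamma_n(\mathbf{X})$ as a continuous image of a compact set: consider the set $P = \{(\gamma, x) : k \le n,\ \gamma \in M_{k,n},\ \tr(\gamma^*\gamma) = 1,\ x \in X_k\}$, which (summing over the finitely many values $k = 1, \dots, n$) is compact because each $X_k$ is compact and the slice $\{\gamma \in M_{k,n} : \tr(\gamma^*\gamma) = 1\}$ is a sphere, hence compact — dropping the surjectivity constraint only enlarges to a still-compact set, and by (b) the image is unchanged. The map $(\gamma, x) \mapsto (\gamma^*\gamma, \gamma^* x \gamma)$ from $M_{k,n} \oplus M_k(V)$ to $M_n \oplus M_n(V)$ is continuous (it is polynomial in the matrix entries and uses the continuous bilinear action of scalar matrices on $M_k(V)$, which is jointly continuous since $V$ is a topological vector space and these are finite-dimensional). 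Therefore $\Gamma_n(\mathbf{X})$, being the continuous image of the compact set $P$, is compact.

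The main obstacle is part (b): one must be careful that the reduction via SVD genuinely preserves \emph{both} coordinates simultaneously and that the compressed point $x'$ still lies in $X_m$ — this is exactly where matrix convexity of $\mathbf{X}$ (closure under isometric compression, and under direct sums for part (a)) is used, and it is worth stating that reduction lemma cleanly before doing the bookkeeping with partial isometries. The topological claims in (c) are then routine given (a) and (b), the only point needing a word being joint continuity of the action $M_{k,n} \times M_k(V) \to M_n(V)$, which holds because all spaces are finite-dimensional.
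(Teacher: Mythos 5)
Your proof is correct and follows essentially the same strategy as the paper: (a) stacking $\gamma_i$'s and taking direct sums of $x_i$'s, (b) compressing $\gamma$ via an isometry onto its range (your $\iota$ plays exactly the role of the paper's $\delta \in M_{k,r}$, and the detour through the polar decomposition $\gamma = up$ is unnecessary window dressing since $\gamma' = \iota^*\gamma$ directly), and (c) realizing $\Gamma_n(\mathbf{X})$ as the continuous image of the compact set of pairs $(\gamma, x)$ with $k \le n$, $\tr(\gamma^*\gamma)=1$, $x \in X_k$.
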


\begin{proof}
  The arguments are similar to the corresponding arguments in \cite{WW99}.

  (a)
  Let $0 < t < 1$ and let $\gamma_i \in M_{k_i,n}$ and $x_i \in X_{k_i}$ for $i=1,2$ be as in the definition
  of $\Gamma_n(\mathbf{X})$.
  Let $k = k_1 + k_2$ and
  \begin{equation*}
    \gamma =
    \begin{bmatrix}
      t^{1/2} \gamma_1 \\ (1-t)^{1/2} \gamma_2
    \end{bmatrix} \in M_{k,n}.
  \end{equation*}
  Then $\gamma^* \gamma = t \gamma_1^* \gamma_1 + (1 - t) \gamma_2^* \gamma_2$.
  In particular, $\tr(\gamma^* \gamma) = 1$.
  Since $\mathbf{X}$ is matrix convex, $x = x_1 \oplus x_2 \in X_{k}$, so
  \begin{equation*}
    t (\gamma_1^* \gamma_1, \gamma_1^* x_1 \gamma_1) + (1-t) (\gamma_2^* \gamma_2, \gamma_2^* x_2 \gamma_2)
    = (\gamma^* \gamma, \gamma^* x \gamma) \in \Gamma_n(\mathbf{X}).
  \end{equation*}

  (b) Let $\gamma \in M_{k,n}$ and $x \in X_k$ be as in the definition
  of $\Gamma_n(\mathbf{X})$. Let $r$ be the rank of $\gamma$, so that $1 \le r \le n$, and let $\delta \in M_{k,r}$
  be an isometry onto the range of $\gamma$. Define $\beta = \delta^* \gamma \in M_{r,n}$.
  Then $\beta$ is surjective, and
  \begin{equation*}
    (\gamma^* \gamma, \gamma^* x \gamma) =
    (\gamma^* \delta \delta^* \gamma,
    \gamma^* \delta \delta^* x \delta \delta^* \gamma)
    = (\beta^* \beta, \beta^* (\delta^* x \delta) \beta).
  \end{equation*}
  Since $\mathbf{X}$ is matrix convex, $\delta^* x \delta \in X_r$, so we have
  obtained the desired representation.

  (c) We have seen in part (a) that $\Gamma_n(\mathbf{X})$ is convex.
  Part (b) implies that
  \begin{equation*}
    \Gamma_n(\mathbf{X}) = \{ ( \gamma^* \gamma, \gamma^* x \gamma): \gamma \in M_{k,n}, \tr(\gamma^* \gamma) = 1,x \in X_k, k \le n \},
  \end{equation*}
  which shows that $\Gamma_n(\mathbf{X})$ is compact since for each $k$, the set of all $\gamma \in M_{k,n}$
  with $\tr(\gamma^* \gamma) = 1$ is compact.
\end{proof}

The following lemma shows that extreme points of $\Gamma_n(\mathbf{X})$ give
rise to matrix extreme points of $\mathbf{X}$. In fact, we will
see in Proposition \ref{prop:extreme_point_char} that every matrix extreme point arises
in this way, but for the proof of Minkowki's theorem, the easier direction suffices.

\begin{lem}
  \label{lem:extreme_point}
  Let $\mathbf{X} = (X_n)_{n=1}^\infty$ be a matrix convex set in a vector space $V$.
  Let $x \in X_k$ and let $\gamma \in M_{k,n}$ be surjective with $\tr(\gamma^* \gamma) = 1$.
  If $(\gamma^* \gamma, \gamma^* x \gamma)$
  is an extreme point of $\Gamma_n(\mathbf{X})$, then $x$ is a matrix extreme point
  of $\mathbf{X}$.
\end{lem}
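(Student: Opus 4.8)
The plan is to start from an arbitrary proper matrix convex combination of elements of $\mathbf{X}$ representing $x$ and to show that it must be trivial. So suppose
\[
x = \sum_{i=1}^s \beta_i^* x_i \beta_i, \qquad x_i \in X_{n_i},\ \ \beta_i \in M_{n_i,k}\ \text{surjective},\ \ \sum_{i=1}^s \beta_i^* \beta_i = I_k .
\]
First I would transport this combination through $\gamma$. Setting $\delta_i = \beta_i \gamma \in M_{n_i,n}$, a direct computation gives $\sum_i \delta_i^* \delta_i = \gamma^* \gamma$ and $\sum_i \delta_i^* x_i \delta_i = \gamma^* x \gamma$. Since $\gamma$ is surjective and each $\beta_i$ is surjective, $\delta_i$ is a composition of surjections, hence nonzero; therefore $t_i := \tr(\delta_i^* \delta_i) > 0$ and $\sum_i t_i = \tr(\gamma^*\gamma) = 1$. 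Putting $\eta_i = t_i^{-1/2} \delta_i$, we have $\tr(\eta_i^*\eta_i) = 1$ and $x_i \in X_{n_i}$, so $(\eta_i^* \eta_i, \eta_i^* x_i \eta_i) \in \Gamma_n(\mathbf{X})$, and
\[
(\gamma^*\gamma,\ \gamma^* x \gamma) = \sum_{i=1}^s t_i \, (\eta_i^* \eta_i,\ \eta_i^* x_i \eta_i)
\]
exhibits $(\gamma^*\gamma, \gamma^*x\gamma)$ as a convex combination, with strictly positive weights, of elements of the convex set $\Gamma_n(\mathbf{X})$ (convexity by Lemma \ref{lem:matrix_convex_Gamma}(a)).

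Next I would use the hypothesis that $(\gamma^*\gamma, \gamma^*x\gamma)$ is an extreme point of $\Gamma_n(\mathbf{X})$. A convex combination with strictly positive weights of points of a convex set, one of which is extreme, has all its points equal to that extreme point (by grouping terms and applying the definition of extreme point repeatedly). Hence
\[
\eta_i^* \eta_i = \gamma^* \gamma \quad \text{and} \quad \eta_i^* x_i \eta_i = \gamma^* x \gamma \qquad (1 \le i \le s) .
\]
From here I extract triviality. For the dimensions: $\eta_i$ is surjective, so $\operatorname{rank}(\eta_i^*\eta_i) = \operatorname{rank}(\eta_i) = n_i$, whereas $\operatorname{rank}(\gamma^*\gamma) = \operatorname{rank}(\gamma) = k$, forcing $n_i = k$ for all $i$. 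For the unitary equivalence: $\eta_i^*\eta_i = \gamma^*\gamma$ gives $\|\eta_i v\| = \|\gamma v\|$ for every $v \in \bC^n$, so $\gamma v \mapsto \eta_i v$ is a well-defined surjective linear isometry $\bC^k \to \bC^k$ (using $n_i = k$ together with surjectivity of $\gamma$ and of $\eta_i$); denote it $w_i$, a unitary in $M_k$, and observe $\eta_i = w_i \gamma$. Substituting into $\eta_i^* x_i \eta_i = \gamma^* x \gamma$ yields $\gamma^*(w_i^* x_i w_i - x)\gamma = 0$. Choosing a right inverse $\rho \in M_{n,k}$ of $\gamma$ (possible since $\gamma$ is surjective) and conjugating by $\rho$, using $\gamma\rho = I_k$, gives $w_i^* x_i w_i - x = 0$, i.e.\ $x_i$ is unitarily equivalent to $x$. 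Combined with $n_i = k$, this shows the original matrix convex combination is trivial, so $x$ is a matrix extreme point of $\mathbf{X}$.

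The main obstacle, such as it is, lies in the passage from the Gram-type identity $\eta_i^*\eta_i = \gamma^*\gamma$ to the conjugating unitary $w_i$, and in the cancellation $\gamma^* y \gamma = 0 \Rightarrow y = 0$ for $y \in M_k(V)$; both rely essentially on the surjectivity of $\gamma$, which is precisely why that hypothesis appears in the statement. Everything else is bookkeeping.
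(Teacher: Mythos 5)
Your proof is correct and follows the same strategy as the paper: push the proper matrix convex combination $x = \sum_i \beta_i^* x_i \beta_i$ through $\gamma$ to produce a convex combination of elements of $\Gamma_n(\mathbf{X})$ with strictly positive weights summing to $(\gamma^*\gamma,\gamma^*x\gamma)$, invoke extremality to equate all the summands with that point, and then use surjectivity of $\gamma$ to pull the resulting equalities back to relations on $\bC^k$. The only cosmetic difference is in the final bookkeeping: the paper extracts $\beta_i^*\beta_i = t_i I_k$ directly from $\gamma^*(\,\cdot\,)\gamma=0$ and then observes that $t_i^{-1/2}\beta_i$ is therefore a unitary, whereas you argue for $n_i=k$ via a rank count and then build the unitary explicitly from the Gram identity $\eta_i^*\eta_i=\gamma^*\gamma$; these amount to the same thing.
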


\begin{proof}
  %Suppose first that $(\gamma^* \gamma, \gamma^* x \gamma)$ is an extreme point of $\Gamma_n(\mathbf{X})$.
  Let $x = \sum_{j=1}^s \gamma_j^* x_j \gamma_j$
  be a proper matrix convex combination of $x$ with $\gamma_j \in M_{k_j,k}$ and $x_j \in X_{k_j}$.
  Then
  \begin{equation*}
    (\gamma^* \gamma, \gamma^* x \gamma)
    = \sum_{j=1}^s ( \gamma^* \gamma_j^* \gamma_j \gamma, \gamma^* \gamma_j^* x_j \gamma_j \gamma).
  \end{equation*}
  Since $\gamma$ and $\gamma_j$ are surjective, we may define $t_j = \tr(\gamma^* \gamma_j^* \gamma_j \gamma) > 0$
  and $\beta_j = t_j^{-1/2} \gamma_j \gamma$. Then
  $\tr(\beta_j^* \beta_j) = 1$ for all $j$ and
  \begin{equation*}
    (\gamma^* \gamma, \gamma^* x \gamma) = \sum_{j=1}^s t_j ( \beta_j^* \beta_j, \beta_j^* x_j \beta_j).
  \end{equation*}
  Moreover, $\sum_{j=1}^s t_j = \tr(\gamma^* \gamma) = 1$. Since $(\gamma^* \gamma, \gamma^* x \gamma)$
  is an extreme point of $\Gamma_n(\mathbf{X})$, it follows that
  \begin{equation*}
    (\gamma^* \gamma, \gamma^* x \gamma) = (\beta_j^* \beta_j, \beta_j^* x_j \beta_j)
  \end{equation*}
  for each $j$. Equality in the first component means that
  \begin{equation*}
    \gamma^* \gamma =t_j^{-1} \gamma^* \gamma_j^* \gamma_j \gamma,
  \end{equation*}
  so surjectivity of $\gamma$ implies that $\gamma_j^* \gamma_j = t_j$ for each $j$.
  Since each $\gamma_j$ is also surjective, we find that $k_j = k$ for each $j$, and that
  $t_j^{-1/2} \gamma_j$ is unitary for each $j$. Equality in the second component
  means that
  \begin{equation*}
    \gamma^* x \gamma = \gamma^* (t_j^{-1/2} \gamma_j)^* x_j (t_j^{-1/2} \gamma_j) \gamma,
  \end{equation*}
  so that $x = (t_j^{-1/2} \gamma_j)^* x_j (t_j^{-1/2} \gamma_j)$ by surjectivity
  of $\gamma$. Thus, the matrix convex combination was trivial, so that $x$ is a matrix
  extreme point of $\mathbf{X}$.
\end{proof}

We now are now ready to prove Theorem \ref{thm:minkowski_intro} from the introduction.

\begin{thm}
  \label{thm:minkowski}
  Let $\mathbf{X}$ be a compact matrix convex set in a finite-dimensional locally convex vector space $V$. Then
  $\mathbf{X}$ is the matrix convex hull of its matrix extreme points.
\end{thm}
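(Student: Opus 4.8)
The plan is to transfer the classical Minkowski theorem through the device $\Gamma_n(\mathbf{X})$, in exactly the same spirit as Theorem \ref{thm:cara} transferred the classical Carath\'eodory theorem. Since $V$ is finite-dimensional, each $M_n \oplus M_n(V)$ is a finite-dimensional locally convex vector space, and by Lemma \ref{lem:matrix_convex_Gamma}(c) the set $\Gamma_n(\mathbf{X})$ is a compact convex subset of it. Hence the classical Minkowski theorem (Theorem \ref{thm:minkowksi_classical}) applies: $\Gamma_n(\mathbf{X})$ is the convex hull of its extreme points.

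First I would note that for any $n \ge 1$ and any $x \in X_n$, the pair $(I_n, x)$ lies in $\Gamma_n(\mathbf{X})$, taking $\gamma = I_n$ and using that $\tr(I_n) = 1$ for the normalized trace. By the above, $(I_n,x)$ is a finite convex combination $(I_n,x) = \sum_{j=1}^r t_j p_j$ of extreme points $p_j$ of $\Gamma_n(\mathbf{X})$. Using Lemma \ref{lem:matrix_convex_Gamma}(b), each such $p_j$ can be written as $p_j = (\gamma_j^* \gamma_j, \gamma_j^* y_j \gamma_j)$ with $\gamma_j \in M_{k_j,n}$ surjective, $\tr(\gamma_j^* \gamma_j) = 1$, $k_j \le n$, and $y_j \in X_{k_j}$. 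Since $p_j$ is an extreme point of $\Gamma_n(\mathbf{X})$ and $\gamma_j$ is surjective, Lemma \ref{lem:extreme_point} shows that each $y_j$ is a matrix extreme point of $\mathbf{X}$.

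Finally, applying the easy direction of Lemma \ref{lem:Gamma} to the convex combination $(I_n,x) = \sum_{j=1}^r t_j (\gamma_j^* \gamma_j, \gamma_j^* y_j \gamma_j)$ expresses $x$ as a matrix convex combination of $y_1,\ldots,y_r$, each of which is a matrix extreme point of $\mathbf{X}$. Therefore every element of every $X_n$ lies in the matrix convex hull of the matrix extreme points of $\mathbf{X}$; the reverse inclusion is immediate because $\mathbf{X}$ is matrix convex and contains all of its matrix extreme points. This yields the claimed equality.

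As for the main obstacle: the genuine content has already been isolated in the preceding lemmas, so the remaining argument is a short assembly. The one point that deserves care is that an extreme point of $\Gamma_n(\mathbf{X})$ admits a representation with a \emph{surjective} coefficient $\gamma$ — this is precisely Lemma \ref{lem:matrix_convex_Gamma}(b), and it is exactly the hypothesis needed to invoke Lemma \ref{lem:extreme_point}. The other place where the hypotheses are genuinely used is in guaranteeing that $\Gamma_n(\mathbf{X})$ is compact and convex, so that the finite-dimensional Minkowski theorem applies with no closure operation; this is where finite-dimensionality of $V$ and compactness of $\mathbf{X}$ enter, via Lemma \ref{lem:matrix_convex_Gamma}(c).
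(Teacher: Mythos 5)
Your proof is correct and follows essentially the same route as the paper's: apply the classical Minkowski theorem to the compact convex set $\Gamma_n(\mathbf{X})$, use part (b) of Lemma \ref{lem:matrix_convex_Gamma} to obtain representations of the extreme points with surjective coefficients, invoke Lemma \ref{lem:extreme_point} to recognize the underlying $y_j$ as matrix extreme points of $\mathbf{X}$, and then translate back via Lemma \ref{lem:Gamma}. The only cosmetic difference is that you spell out the trivial reverse inclusion, which the paper leaves implicit.
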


\begin{proof}
  Let $n \ge 1$ and let $x \in X_n$. Then $\Gamma_n(\mathbf{X})$ is a compact convex
  set in a finite-dimensional space by part (c) of Lemma \ref{lem:matrix_convex_Gamma}. Observe
  that $(I_n,x) \in \Gamma_n(\mathbf{X})$, hence by Minkowski's theorem, $(I_n,x)$
  is a finite convex combination of extreme points of $\Gamma_n(\mathbf{X})$,
  say
  \begin{equation*}
    (I_n,x) = \sum_{j=1}^r t_j (\gamma_j^* \gamma_j, \gamma_j^* x_j \gamma_j).
  \end{equation*}
  By part (b) of Lemma \ref{lem:matrix_convex_Gamma}, we may assume that each $\gamma_j$
  is surjective, so that $x_j \in X_{k_j}$ for some $k_j \le n$.
  In this setting, Lemma \ref{lem:extreme_point} implies that each $x_j$
  is a matrix extreme point of $\mathbf{X}$. Lemma \ref{lem:Gamma}, applied
  to the collection $x_1,\ldots,x_r$,
  shows that $x$ is a matrix convex combination of matrix extreme points
  of $\mathbf{X}$.
\end{proof}

\begin{rem}
  The proof of Theorem \ref{thm:minkowski} shows that each element of $X_n$
  is in fact a matrix convex combination of matrix extreme points
  in $X_k$ for $k \le n$.
\end{rem}

As a by-product, our methods also yield a proof of the Krein--Milman theorem for matrix convex sets due to
Webster and Winkler \cite{WW99}, which is arguably slightly simpler than the original proof.

\begin{thm}[Webster--Winkler]
  Let $\mathbf{X}$ be a compact matrix convex set in a locally convex vector space $V$. Then $\mathbf{X}$
  is the closed matrix convex hull of its matrix extreme points.
\end{thm}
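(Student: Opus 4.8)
The plan is to run the argument of Theorem~\ref{thm:minkowski} essentially verbatim, replacing the one ingredient that genuinely uses finite dimensionality — the classical Minkowski theorem — by the classical Krein--Milman theorem, at the cost of only obtaining a closure in the end. So fix $n \ge 1$ and $x \in X_n$; the goal is to show that $x$ lies in the closed matrix convex hull of the matrix extreme points of $\mathbf{X}$. By part~(c) of Lemma~\ref{lem:matrix_convex_Gamma}, the set $\Gamma_n(\mathbf{X})$ is a compact convex subset of the locally convex space $M_n \oplus M_n(V)$ (locally convex since $M_n$ is finite-dimensional and $M_n(V)$, with the product topology, is a finite power of $V$). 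Since $(I_n, x) \in \Gamma_n(\mathbf{X})$, the Krein--Milman theorem provides a net $(p_\alpha)$ of convex combinations of extreme points of $\Gamma_n(\mathbf{X})$ with $p_\alpha \to (I_n, x)$.

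Next I would identify the relevant extreme points, exactly as in the proof of Theorem~\ref{thm:minkowski}. Writing $p_\alpha = \sum_j t_j^\alpha e_j^\alpha$ with $t_j^\alpha > 0$, $\sum_j t_j^\alpha = 1$ and each $e_j^\alpha$ an extreme point of $\Gamma_n(\mathbf{X})$, part~(b) of Lemma~\ref{lem:matrix_convex_Gamma} lets me write $e_j^\alpha = ((\gamma_j^\alpha)^* \gamma_j^\alpha, (\gamma_j^\alpha)^* x_j^\alpha \gamma_j^\alpha)$ with $\gamma_j^\alpha$ surjective, $\tr((\gamma_j^\alpha)^* \gamma_j^\alpha) = 1$ and $x_j^\alpha \in X_{k_j^\alpha}$ for some $k_j^\alpha \le n$; Lemma~\ref{lem:extreme_point} then shows that each $x_j^\alpha$ is a matrix extreme point of $\mathbf{X}$.

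The one point that requires care — and the step I expect to be the main obstacle, since it is precisely where the argument must differ from the finite-dimensional case — is that Lemma~\ref{lem:Gamma} turns a convex combination into a matrix convex combination only when the first coordinate is \emph{exactly} $I_n$, whereas here the first coordinate $P_\alpha := \sum_j t_j^\alpha (\gamma_j^\alpha)^* \gamma_j^\alpha$ of $p_\alpha$ only converges to $I_n$. The fix is to renormalize: for $\alpha$ large enough that $P_\alpha$ is positive-definite, set $\beta_j^\alpha = (t_j^\alpha)^{1/2} \gamma_j^\alpha P_\alpha^{-1/2}$, so that $\sum_j (\beta_j^\alpha)^* \beta_j^\alpha = P_\alpha^{-1/2} P_\alpha P_\alpha^{-1/2} = I_n$ and hence $z_\alpha := \sum_j (\beta_j^\alpha)^* x_j^\alpha \beta_j^\alpha = P_\alpha^{-1/2} y_\alpha P_\alpha^{-1/2}$ is a genuine matrix convex combination of matrix extreme points of $\mathbf{X}$, where $y_\alpha$ denotes the second coordinate of $p_\alpha$. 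Since $P_\alpha \to I_n$ and $y_\alpha \to x$, continuity of the multiplication maps $M_n \times M_n(V) \times M_n \to M_n(V)$ and of $P \mapsto P^{-1/2}$ near $I_n$ give $z_\alpha \to x$, so $x$ belongs to the closed matrix convex hull of the matrix extreme points of $\mathbf{X}$, as desired. (An alternative route is to note that $\Gamma_n(\mathbf{Y})$ is convex for $\mathbf{Y}$ the matrix convex hull of the matrix extreme points, deduce $(I_n,x) \in \overline{\Gamma_n(\mathbf{Y})}$, and then recover $x$ from approximating elements using the same renormalization together with compactness of the unitary group and unitary invariance of $\overline{Y_n}$; the version above avoids the passage to a subnet.)
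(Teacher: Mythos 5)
Your proof follows the same approach as the paper's: apply the classical Krein--Milman theorem to the compact convex set $\Gamma_n(\mathbf{X})$, identify extreme points of $\Gamma_n(\mathbf{X})$ with matrix extreme points of $\mathbf{X}$ via Lemmas \ref{lem:matrix_convex_Gamma}(b) and \ref{lem:extreme_point}, and renormalize the first coordinate to $I_n$ by conjugating with $P_\alpha^{-1/2}$ (the paper's $\alpha^{-1/2}$). Your handling of the final limit, showing $P_\alpha^{-1/2} y_\alpha P_\alpha^{-1/2} \to x$ directly along the net, is if anything a slightly cleaner phrasing of the same argument.
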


\begin{proof}
  Let $n \ge 1$ and let $x \in X_n$.  By part (c) of Lemma \ref{lem:matrix_convex_Gamma}, $\Gamma_n(\mathbf{X})$
  is a compact convex set in the locally convex space $M_n \oplus M_n(V)$. Since $(I_n,x) \in \Gamma_n(\mathbf{X})$,
  the classical Krein--Milman theorem shows that $(I_n,x)$
  belongs to the closed convex hull of the set of extreme points of $\Gamma_n(\mathbf{X})$.
  Thus, given a neighborhood $U$ of $x$ in $M_n(V)$ and $0 < \varepsilon < 1$,
  there exist $y \in U$ and $\alpha \in M_n$ with $\|I_n - \alpha\| < \varepsilon$
  such that $(\alpha,y)$ is a
  convex combination of extreme points of $\Gamma_n(\mathbf{X})$, say
  \begin{equation*}
    (\alpha,y) = \sum_{j=1}^r t_j (\gamma_j^* \gamma_j, \gamma_j^* x_j \gamma_j).
  \end{equation*}
  By part (b) of Lemma \ref{lem:matrix_convex_Gamma}, we may again assume that each $\gamma_j$ is surjective,
  so that each $x_j$ is a matrix extreme point of $\mathbf{X}$ by Lemma \ref{lem:extreme_point}.
  Note that $\alpha$ is positive and invertible.
  Let $\beta_j = t_j^{1/2} \gamma_j \alpha^{-1/2}$. Then
  \begin{equation*}
    (I_n,\alpha^{-1/2} y \alpha^{-1/2}) = \sum_{j=1}^r ( \beta_j^* \beta_j, \beta_j^* x_j \beta_j),
  \end{equation*}
  hence $\alpha^{-1/2} y \alpha^{-1/2}$ belongs to the matrix convex hull of the matrix extreme points
  $x_1,\ldots,x_r$.
  This is true for every $0 < \varepsilon < 1$, so we can find a sequence $(\alpha_k)$ of positive invertible
  matrices tending to $I_n$ so that
  $\alpha_k^{-1/2} y \alpha_k^{-1/2}$ belongs to the matrix convex hull
  of the matrix extreme points of $\mathbf{X}$ for all $k$. Continuity of the continuous functional calculus (see,
  for instance, \cite[II.2.3.2]{Blackadar06}) shows that $(\alpha_k^{-1/2})$ tends to $I_n$,
  hence $\alpha_k^{-1/2} y \alpha_k^{-1/2}$ tends to $y$ in $M_n(V)$.
  Thus, $y \in U$ belongs to the closure
  of the matrix convex hull of the matrix extreme points of $\mathbf{X}$.
  This is true for every neighborhood $U$ of $x$ in $M_n(V)$,
  from which the result follows.
\end{proof}

\subsection{Matrix extreme points of \texorpdfstring{$\mathbf{X}$}{X} vs.\ extreme points of \texorpdfstring{$\Gamma_n(\mathbf{X})$}{Gamma(X)}}
We will finish this section by establishing the converse of Lemma \ref{lem:extreme_point}, thus showing
that matrix extreme points of $\mathbf{X}$ are in one-to-one correspondence
with extreme points of $\Gamma_n(\mathbf{X})$.
The first step is the following
special case of Arveson's boundary theorem, see for instance \cite[p.\ 889]{Farenick00}.

\begin{lem}
  \label{lem:bdry}
  Let $\alpha_1,\ldots,\alpha_r \in M_n$ with $\sum_{i=1}^r \alpha_i^* \alpha_i = I_n$. If
  \begin{equation*}
    S = \Big\{ \alpha \in M_n : \sum_{i=1}^r \alpha_i^* \alpha \alpha_i = \alpha \Big\}
  \end{equation*}
  is an irreducible set of matrices, then each $\alpha_i$ is a scalar multiple of $I_n$.
\end{lem}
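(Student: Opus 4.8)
The plan is to recognise the hypothesis as a statement about the fixed points of a unital completely positive map and then apply Arveson's boundary theorem. Consider $\Phi\colon M_n\to M_n$, $\Phi(a)=\sum_{i=1}^r\alpha_i^*a\alpha_i$. Being written in Kraus form, $\Phi$ is completely positive, and the hypothesis $\sum_{i=1}^r\alpha_i^*\alpha_i=I_n$ is precisely the assertion $\Phi(I_n)=I_n$; thus $\Phi$ is u.c.p.\ Moreover $S=\{a\in M_n:\Phi(a)=a\}$ is by definition the fixed-point set of $\Phi$, and since $\Phi$ is unital and positive (hence $*$-preserving), $S$ is a self-adjoint subspace of $M_n$ containing $I_n$, i.e.\ an operator system. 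The assumption that $S$ is an irreducible set of matrices says exactly that $S$ acts irreducibly on $\bC^n$, equivalently $S'=\bC I_n$, equivalently $C^*(S)=M_n$.

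The crucial input is now Arveson's boundary theorem in its finite-dimensional form, as recorded in \cite[p.\ 889]{Farenick00}: because $S\subseteq M_n$ acts irreducibly on $\bC^n$, the identity representation of $M_n$ is a boundary representation for $S$. Concretely, this means that $\mathrm{id}_{M_n}$ is the \emph{only} u.c.p.\ map $M_n\to M_n$ whose restriction to $S$ is the inclusion $S\hookrightarrow M_n$. Since $\Phi$ is u.c.p.\ and, by the definition of $S$, $\Phi|_S$ is exactly that inclusion, we conclude $\Phi=\mathrm{id}_{M_n}$; that is, $\sum_{i=1}^r\alpha_i^*a\alpha_i=a$ for every $a\in M_n$.

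Finally I would deduce from $\Phi=\mathrm{id}_{M_n}$ that each $\alpha_i$ is a scalar multiple of $I_n$. One way: both $(\alpha_1,\dots,\alpha_r)$ and the one-term family $(I_n)$ are families of Kraus operators for the same u.c.p.\ map, so by uniqueness of Kraus representations up to a coisometric change of variables there are scalars $c_i$ with $\sum_i|c_i|^2=1$ and $\alpha_i=c_iI_n$. A self-contained alternative: form the isometry $W=(\alpha_1,\dots,\alpha_r)^{\mathrm{t}}\colon\bC^n\to\bC^n\otimes\bC^r$, so that $W^*(a\otimes I_r)W=\Phi(a)=a$ for all $a$; then $W^*(a\otimes I_r)(I_{nr}-WW^*)(b\otimes I_r)W=ab-ab=0$ for all $a,b$, and with $b=a^*$ this quantity becomes $T^*T$ where $T=(I_{nr}-WW^*)(a^*\otimes I_r)W$, forcing $T=0$. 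Letting $a$ vary shows the range of $W$ is invariant, hence reducing, for $M_n\otimes I_r$, so $WW^*=I_n\otimes vv^*$ for a unit vector $v\in\bC^r$; thus $W\xi=(u\xi)\otimes v$ for a unitary $u\in M_n$, each $\alpha_i$ is a scalar multiple of $u$, and $u^*au=\Phi(a)=a$ for all $a$ forces $u\in\bC I_n$.

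The only step carrying real content is the appeal to Arveson's boundary theorem — that irreducibility of the fixed-point operator system forces the identity map to be its sole u.c.p.\ extension. The identification of $\Phi$ and $S$, and the passage from $\Phi=\mathrm{id}$ to scalar Kraus operators, are routine.
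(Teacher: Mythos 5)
Your proof is correct and follows essentially the same route as the paper: recognize $S$ as the fixed-point operator system of the u.c.p.\ map $\Phi$, invoke Arveson's boundary theorem to conclude $\Phi = \mathrm{id}_{M_n}$, and then deduce that the Kraus operators are scalars. The paper cites the uniqueness part of Choi's theorem for that last step, which is exactly what your ``one way'' invokes, and your self-contained alternative via the isometry $W$ is a direct proof of that same uniqueness fact.
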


\begin{proof}
  Consider the u.c.p.\ map
  \begin{equation*}
    \varphi: M_n \to M_n, \quad \alpha \mapsto \sum_{i=1}^r \alpha_i^* \alpha \alpha_i.
  \end{equation*}
  Arveson's boundary theorem \cite[Theorem 2.1.1]{Arveson72} implies that the identity representation on $M_n$
  is a boundary representation for $S$,
  meaning in particular that the identity map
  on $S$ admits a unique extension to a u.c.p.\ map from $M_n$ to $M_n$.
  Thus, $\varphi$ is the identity map on $M_n$.
  The uniqueness part in Choi's theorem \cite[Remark 4]{Choi75} then shows that each $\alpha_i$
  is a scalar multiple of $I_n$.
\end{proof}

The following lemma contains a different characterization of matrix extreme points.
It implicitly appears (in a slightly different
setting) in \cite{Farenick00}.

\begin{lem}
  \label{lem:extreme_char_bdry}
  Let $\mathbf{X} = (X_n)$ be a matrix convex set in a vector space $V$ and let $x \in X_n$. The following assertions are equivalent:
  \begin{enumerate}[label=\normalfont{(\roman*)}]
    \item $x$ is a matrix extreme point of $\mathbf{X}$.
    \item Whenever $x = \sum_{i=1}^r \gamma_i^* x_i \gamma_i$ is a matrix convex
      combination of elements of $\mathbf{X}$, then there exist $t_i \ge 0$ with
      $\gamma_i^* \gamma_i = t_i I_n$ and $\gamma_i^* x_i \gamma_i= t_i x$
      for $1 \le i \le r$.
  \end{enumerate}
\end{lem}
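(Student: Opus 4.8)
The plan is to prove both implications directly, using Lemma \ref{lem:bdry} (the consequence of Arveson's boundary theorem) for the harder direction.

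For the implication (ii)$\Rightarrow$(i): Suppose $x = \sum_{i=1}^r \gamma_i^* x_i \gamma_i$ is a \emph{proper} matrix convex combination, so each $\gamma_i \in M_{k_i,n}$ is surjective, hence $k_i \le n$. By (ii) there are $t_i \ge 0$ with $\gamma_i^*\gamma_i = t_i I_n$ and $\gamma_i^* x_i \gamma_i = t_i x$. If $t_i = 0$ then $\gamma_i = 0$, contradicting surjectivity unless $k_i = 0$, which we may exclude; so $t_i > 0$ for all $i$. Then $t_i^{-1/2}\gamma_i \in M_{k_i,n}$ is an isometry, and being also surjective it is unitary, forcing $k_i = n$. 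From $\gamma_i^* x_i \gamma_i = t_i x$ we get $x = (t_i^{-1/2}\gamma_i)^* x_i (t_i^{-1/2}\gamma_i)$, so $x_i$ is unitarily equivalent to $x$. Thus the matrix convex combination is trivial, i.e.\ $x$ is a matrix extreme point.

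For the implication (i)$\Rightarrow$(ii), which I expect to be the main obstacle: Let $x = \sum_{i=1}^r \gamma_i^* x_i \gamma_i$ be an arbitrary matrix convex combination with $\gamma_i \in M_{k_i,n}$, $x_i \in X_{k_i}$, and $\sum_i \gamma_i^*\gamma_i = I_n$. The subtlety compared with (ii)$\Rightarrow$(i) is that the $\gamma_i$ need not be surjective, so we must first pass to a proper combination. Following the argument in the proof of Lemma \ref{lem:matrix_convex_Gamma}(b): for each $i$ let $r_i$ be the rank of $\gamma_i$, let $\delta_i \in M_{k_i, r_i}$ be an isometry onto the range of $\gamma_i$, and set $\beta_i = \delta_i^* \gamma_i \in M_{r_i,n}$, which is surjective, and $y_i = \delta_i^* x_i \delta_i \in X_{r_i}$ by matrix convexity. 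Discarding the indices with $\gamma_i = 0$, we get $x = \sum_i \beta_i^* y_i \beta_i$ with $\sum_i \beta_i^* \beta_i = I_n$, a proper matrix convex combination. Since $x$ is a matrix extreme point, this combination is trivial: $r_i = n$ for all $i$ and there exist unitaries $u_i \in M_n$ with $y_i = u_i^* x u_i$, and $\beta_i = c_i u_i$ for some scalars $c_i$ with $\sum_i |c_i|^2 = 1$. Tracing back, $\beta_i^*\beta_i = |c_i|^2 I_n$ and $\beta_i^* y_i \beta_i = |c_i|^2 x$.

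It remains to transfer these identities from $\beta_i$ back to $\gamma_i$. We have $\gamma_i^*\gamma_i = \beta_i^*\beta_i = |c_i|^2 I_n =: t_i I_n$ (using $\gamma_i^*\gamma_i = \gamma_i^*\delta_i\delta_i^*\gamma_i = \beta_i^*\beta_i$, valid since $\delta_i$ maps onto the range of $\gamma_i$, hence $\delta_i\delta_i^*$ is the projection onto that range and fixes $\gamma_i$). Similarly $\gamma_i^* x_i \gamma_i = \gamma_i^*\delta_i(\delta_i^* x_i \delta_i)\delta_i^*\gamma_i = \beta_i^* y_i \beta_i = |c_i|^2 x = t_i x$. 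This establishes (ii). The one place requiring care is the reduction to the proper combination when some $\gamma_i$ vanish or have small rank; but the identities $\gamma_i^*\gamma_i = t_i I_n$ and $\gamma_i^* x_i \gamma_i = t_i x$ hold trivially with $t_i = 0$ for the discarded indices (since then $\gamma_i = 0$), so no information is lost. Note that Lemma \ref{lem:bdry} is not actually needed for this lemma; it enters only in the subsequent Proposition \ref{prop:extreme_point_char}, where one must additionally show surjectivity of the $\gamma$ in the representation coming from an extreme point of $\Gamma_n(\mathbf{X})$.
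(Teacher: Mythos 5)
Your proof of (ii)$\Rightarrow$(i) is fine and matches the paper. But your proof of (i)$\Rightarrow$(ii) has a genuine gap, and it occurs precisely at the point where you claim Lemma~\ref{lem:bdry} is unnecessary.

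After reducing to a proper matrix convex combination $x = \sum_i \beta_i^* y_i \beta_i$ with $\beta_i \in M_n$ invertible, you invoke matrix extremality to conclude the combination is trivial, and then write ``there exist unitaries $u_i \in M_n$ with $y_i = u_i^* x u_i$, \emph{and} $\beta_i = c_i u_i$ for some scalars $c_i$.'' The last clause does not follow. The paper's definition of a trivial matrix convex combination is only that $n_i = n$ for all $i$ and each $y_i$ is unitarily equivalent to $x$; it says nothing about the coefficient matrices $\beta_i$ being scalar multiples of unitaries. That stronger conclusion about the $\beta_i$ is exactly the content one still needs to establish, and it is where Arveson's boundary theorem enters.

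The paper's argument fills this gap as follows: set $\alpha_i = u_i \beta_i$, so that $x = \sum_i \alpha_i^* x \alpha_i$ with $\sum_i \alpha_i^*\alpha_i = I_n$. One then considers the operator system $S_0 = \{(\id_{M_n}\otimes v^*)(x): v^* \in V^*\}$, which sits inside $S = \{\alpha \in M_n : \sum_i \alpha_i^*\alpha\alpha_i = \alpha\}$. Matrix extremality of $x$ forces $S_0$ to be irreducible (otherwise compressing $x$ to a pair of complementary reducing subspaces would produce a proper non-trivial matrix convex combination). Irreducibility of $S$ then triggers Lemma~\ref{lem:bdry} (Arveson's boundary theorem plus the uniqueness part of Choi's theorem), which yields $\alpha_i = \lambda_i I_n$, and hence $\gamma_i^*\gamma_i = |\lambda_i|^2 I_n$ and $\gamma_i^* x_i \gamma_i = |\lambda_i|^2 x$. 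Without this step your argument does not conclude, so Lemma~\ref{lem:bdry} is in fact essential to this lemma, not just to Proposition~\ref{prop:extreme_point_char}.
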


\begin{proof}
  (ii) $\Rightarrow$ (i) Let $x= \sum_{i=1}^r \gamma_i^* x_i \gamma_i$ be a proper matrix convex
  combination of elements of $\mathbf{X}$. By assumption, there exist $t_i \ge 0$
  with $\gamma_i^* \gamma_i = t_i I_n$ and $\gamma_i^* x_i \gamma_i = t_i x$
  for $1 \le i \le n$. Since each $\gamma_i$ is surjective, $t_i > 0$. Let $u_i = t_i^{-1/2} \gamma_i$.
  Then $u_i$ is unitary and $u_i^* x_i u_i = x$ for $1 \le i \le n$, so the matrix convex combination
  is trivial.

  (i) $\Rightarrow$ (ii) Suppose that $x$ is a matrix extreme point and let $x = \sum_{i=1}^r \gamma_i^* x_i \gamma_i$
  be a matrix convex combination of elements of $\mathbf{X}$.
  We may assume that $\gamma_i \neq 0$ for $1 \le i \le r$.
  Moreover, as in the proof of part (b) of Lemma \ref{lem:matrix_convex_Gamma},
  there exist elements $\widetilde x_i$ of $\mathbf{X}$ and surjective scalar matrices $\widetilde{\gamma_i}$
  of the appropriate size so that $\gamma_i^* \gamma_i = \widetilde{\gamma_i}^* \widetilde{\gamma_i}$
  and $\gamma_i^* x_i \gamma_i = \widetilde{\gamma_i}^* \widetilde x_i \widetilde \gamma_i$
  for $1 \le i \le n$. Thus, we may without loss of generality assume that each $\gamma_i$
  is surjective, so that the matrix convex combination $x = \sum_{i=1}^r \gamma_i^* x_i \gamma_i$
  is proper.
  
  Since $x$ is a matrix extreme point,
  there exist unitaries $u_i \in M_n$
  so that $x_i = u_i^* x u_i$ for $1 \le i \le r$. Let $\alpha_i = u_i \gamma_i$, so that
  \begin{equation}
    \label{eqn:extreme_char}
    x = \sum_{i=1}^r \alpha_i^* x \alpha_i.
  \end{equation}
  We will show
  that $\alpha_i = \lambda_i I_n$ for some $\lambda_i \in \bC$. Assuming this conclusion for the moment,
  it then follows that $\gamma_i^* \gamma_i =  \alpha_i^* \alpha_i = |\lambda_i|^2 I_n$
  and $\gamma_i^* x_i \gamma_i = \gamma_i^* u_i^* x u_i \gamma_i = \alpha_i^* x \alpha_i = |\lambda_i|^2 x$.

  It remains to show that each $\alpha_i$ is a scalar multiple of $I_n$. Since
  $\sum_{i=1}^r \alpha_i^* \alpha_i = I_n$, it suffices by Lemma \ref{lem:bdry} to prove that
  the operator system
  \begin{equation*}
    S = \Big\{ \alpha \in M_n : \sum_{i=1}^r \alpha_i^* \alpha \alpha_i = \alpha \Big\}
  \end{equation*}
  is irreducible. Let $V^*$ denote the algebraic dual space of $V$ and let
  \begin{equation*}
    S_0 =  \{ (\id_{M_n} \otimes v^*) (x) : v^* \in V^* \} \subset M_n.
  \end{equation*}
  From \eqref{eqn:extreme_char}, we deduce that $S_0 \subset S$.
  We finish the proof by showing that $S_0$ is irreducible.
  Assume toward a contradiction that $S_0$ is reducible. Then there exist
  isometries $\beta \in M_{n k}$ and $\delta \in M_{n l}$
  for some $1 \le k,l < n$,
  so that $\beta \beta^* + \delta \delta^* = I_n$
  and
  \begin{equation*}
    \alpha = \beta \beta^* \alpha \beta \beta^* + \delta \delta^* \alpha \delta \delta^* 
  \end{equation*}
  for all $\alpha \in S_0$. Since maps of the form $(\id_{M_n} \otimes v^*)$ separate the points of $M_n(V)$,
  it follows that
  \begin{equation}
    \label{eqn:extreme_char_2}
    x = \beta (\beta^* x \beta) \beta^* + \delta (\delta^* x \delta) \delta^*.
  \end{equation}
  Matrix convexity of $\mathbf{X}$ implies that $\beta^* x \beta \in X_k$
  and $\delta^* x \delta \in X_l$, so \eqref{eqn:extreme_char_2} expresses
  $x$ as a proper non-trivial matrix convex combination of elements of $\mathbf{X}$, contradicting
  the fact that $x$ is a matrix extreme point of $\mathbf{X}$.
\end{proof}

We are now ready to prove the converse of Lemma \ref{lem:extreme_point}.
\begin{prop}
  \label{prop:extreme_point_char}
  Let $\mathbf{X} = (X_n)_{n=1}^\infty$ be a matrix convex set in a vector space $V$.
  Let $x \in X_k$ and let $\gamma \in M_{k,n}$ be surjective with $\tr(\gamma^* \gamma) = 1$. Then $(\gamma^* \gamma, \gamma^* x \gamma)$
  is an extreme point of $\Gamma_n(\mathbf{X})$ if and only if $x$ is a matrix extreme point
  of $\mathbf{X}$.
\end{prop}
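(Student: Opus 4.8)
The plan is to prove the nontrivial implication, since the reverse one is exactly Lemma~\ref{lem:extreme_point}. So assume $x\in X_k$ is a matrix extreme point of $\mathbf{X}$ and $\gamma\in M_{k,n}$ is surjective with $\tr(\gamma^*\gamma)=1$, and aim to show that $(\gamma^*\gamma,\gamma^*x\gamma)$ is an extreme point of the set $\Gamma_n(\mathbf{X})$, which is convex by Lemma~\ref{lem:matrix_convex_Gamma}(a). Suppose
\begin{equation*}
  (\gamma^*\gamma,\gamma^*x\gamma)=t\,(\alpha_1^*\alpha_1,\alpha_1^*y_1\alpha_1)+(1-t)\,(\alpha_2^*\alpha_2,\alpha_2^*y_2\alpha_2)
\end{equation*}
with $0<t<1$, $\alpha_i\in M_{l_i,n}$, $\tr(\alpha_i^*\alpha_i)=1$ and $y_i\in X_{l_i}$; the goal is to show that each summand equals $(\gamma^*\gamma,\gamma^*x\gamma)$.

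First I would normalize $\gamma$. Let $w\in M_{n,k}$ be an isometry with range $(\ker\gamma)^\perp=\operatorname{ran}(\gamma^*)$ and set $g=\gamma w\in M_k$, which is invertible since $\gamma$ is surjective; then $\gamma=g w^*$, so $\gamma^*\gamma=w g^*g w^*$ and $\gamma^*x\gamma=w g^*x g w^*$. Comparing first coordinates and using positivity of each $\alpha_i^*\alpha_i$ gives $\ker\gamma=\ker(\gamma^*\gamma)\subseteq\ker(\alpha_i^*\alpha_i)=\ker\alpha_i$, so $\alpha_i=\alpha_i w w^*$; writing $\mu_i=\alpha_i w\in M_{l_i,k}$ we get $\alpha_i=\mu_i w^*$, hence $\alpha_i^*\alpha_i=w\mu_i^*\mu_i w^*$ and $\alpha_i^*y_i\alpha_i=w\mu_i^*y_i\mu_i w^*$. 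Compressing both coordinate identities by $w^*$ on the left and $w$ on the right (using $w^*w=I_k$) yields $g^*g=t\mu_1^*\mu_1+(1-t)\mu_2^*\mu_2$ and $g^*xg=t\mu_1^*y_1\mu_1+(1-t)\mu_2^*y_2\mu_2$.

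Now I would invert $g$: with $\nu_i=t_i^{1/2}\mu_i g^{-1}$, where $t_1=t$ and $t_2=1-t$, the first identity becomes $\nu_1^*\nu_1+\nu_2^*\nu_2=I_k$ and the second becomes $x=\nu_1^*y_1\nu_1+\nu_2^*y_2\nu_2$, so $x$ is a matrix convex combination of elements of $\mathbf{X}$. Since $x$ is a matrix extreme point, Lemma~\ref{lem:extreme_char_bdry} provides $s_i\ge 0$ with $\nu_i^*\nu_i=s_iI_k$ and $\nu_i^*y_i\nu_i=s_i x$. Unwinding the definition of $\nu_i$ gives $\mu_i^*\mu_i=(s_i/t_i)\,g^*g$ and $\mu_i^*y_i\mu_i=(s_i/t_i)\,g^*xg$, hence $\alpha_i^*\alpha_i=(s_i/t_i)\,\gamma^*\gamma$ and $\alpha_i^*y_i\alpha_i=(s_i/t_i)\,\gamma^*x\gamma$; taking normalized traces in $M_n$ forces $s_i=t_i$, so both summands equal $(\gamma^*\gamma,\gamma^*x\gamma)$. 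This exhibits $(\gamma^*\gamma,\gamma^*x\gamma)$ as an extreme point of $\Gamma_n(\mathbf{X})$.

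I expect the delicate point to be the step producing a genuine matrix convex combination of $x$ to which extremality can be applied: the $\alpha_i$ (hence the $\nu_i$) need not be surjective, so the raw definition of matrix extreme point does not apply directly, and one must either compress to the surjective case as in Lemma~\ref{lem:matrix_convex_Gamma}(b) or, as above, invoke Lemma~\ref{lem:extreme_char_bdry}, which is valid for arbitrary matrix convex combinations. The rest is routine linear-algebra bookkeeping around the factorization $\gamma=g w^*$.
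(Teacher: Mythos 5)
Your proof is correct and follows essentially the same route as the paper: reduce the convex‐combination identity in $\Gamma_n(\mathbf{X})$ to a matrix convex combination of $x$ using the right‐invertibility of $\gamma$, invoke Lemma~\ref{lem:extreme_char_bdry}, and translate back via the kernel inclusion $\ker\gamma\subseteq\ker\alpha_i$ and a trace computation. Your factorization $\gamma=gw^*$ plays the same role as the paper's right inverse $\delta$ of $\gamma$ (concretely $\delta=wg^{-1}$), and your observation that Lemma~\ref{lem:extreme_char_bdry} already covers non‐surjective $\nu_i$ lets you skip the preliminary reduction to surjective $\beta_j$ via Lemma~\ref{lem:matrix_convex_Gamma}(b).
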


\begin{proof}
  The ``only if'' part is Lemma \ref{lem:extreme_point}.
  Conversely, suppose that $x$ is a matrix extreme point of $\mathbf{X}$ and let
  \begin{equation*}
    (\gamma^* \gamma, \gamma^* x \gamma) = \sum_{j=1}^s t_j (\beta_j^* \beta_j, \beta_j^* x_j \beta_j)
  \end{equation*}
  be a proper convex combination with $\beta_j \in M_{k_j,n}$ surjective, $\tr(\beta_j^* \beta_j) = 1$
  and $x_j \in X_{k_j}$ for each $j$ (which we may assume by part (b) of Lemma \ref{lem:matrix_convex_Gamma}).
  Since $\gamma \in M_{k,n}$ is surjective,
  there exists $\delta \in M_{n,k}$ with $\gamma \delta = I_k$, thus
  \begin{equation*}
    (I_k, x) = \sum_{j=1}^s t_j ( ( \beta_j \delta)^* (\beta_j \delta), (\beta_j \delta)^* x_j (\beta_j \delta)).
  \end{equation*}
  Let $\alpha_j = t_j^{1/2} \beta_j \delta \in M_{k_j,k}$, so that
  \begin{equation*}
    \sum_{j=1}^s \alpha_j^* \alpha_j
    = I_k \quad \text{ and } \quad x = \sum_{j=1}^s \alpha_j^* x_j \alpha_j.
  \end{equation*}
  Since $x$ is a matrix extreme point, it follows from Lemma \ref{lem:extreme_char_bdry}
  that there exist scalars $\lambda_j \ge 0$ such that
  \begin{equation}
    \label{eqn:alpha_trivial}
    \alpha_j^* \alpha_j = \lambda_j I_k \quad \text{ and } \quad
    \alpha_j^* x_j \alpha_j = \lambda_j x.
  \end{equation}

  We claim that
  \begin{equation}
    \label{eqn:del_gam}
    \beta_j \delta \gamma = \beta_j
  \end{equation}
  for each $j$.
  Indeed, since $t_j \beta_j^* \beta_j \le \gamma^* \gamma$, we find that
  $\ker(\gamma) \subset \ker(\beta_j)$. On the other hand, $\gamma( I_n - \delta \gamma) = 0$,
  hence also $\beta_j(I_n -\delta \gamma) = 0$, as asserted.

  Using the definition of $\alpha_j$ and Equations \eqref{eqn:alpha_trivial} and \eqref{eqn:del_gam},
  we conclude that $t_j \beta_j^* \beta_j = \lambda_j \gamma^* \gamma$ and $t_j \beta_j^* x_j \beta_j
  = \lambda_j \gamma^* x \gamma$ for each $j$. Taking traces in the first equation,
  we see that $\lambda_j = t_j$, so that our convex combination
  was trivial.
\end{proof}

\section{A finite-dimensional Arveson--Stinespring theorem}
\label{sec:fd_dilation}

The goal of this section is to prove Theorem \ref{thm:main}. We begin with the following
easy consequence of the usual proof of Stinespring's dilation theorem.

\begin{lem}
  \label{lem:stinespring}
  Let $A$ be a unital $C^*$-algebra, let $S \subset A$ be an operator system
  and let $\varphi: S \to B(H)$ be a u.c.p.\ map with $\dim(H) < \infty$. Then the following
  are equivalent:
  \begin{enumerate}[label=\normalfont{(\roman*)}]
    \item 
      The map $\varphi$ dilates to a finite-dimensional representation of $A$.
  %There exist
  %a finite-dimensional Hilbert space $K \supset H$ and a unital $*$-homomorphism $\pi: A \to B(K)$
  %such that $\varphi(s) = P_H \pi(s) \big|_H$ for all $s \in S$.
  \item There exist a finite-dimensional unital $C^*$-algebra $B$, a unital $*$-homomorphism
    $\sigma: A \to B$ and a u.c.p.\ map $\psi: B \to B(H)$ such that $\varphi(s) = (\psi \circ \sigma)(s)$
    for all $s \in S$.
  \end{enumerate}
  Moreover, in the setting of (ii), we can achieve that $\dim(K) \le \dim(B) \dim(H)$.
\end{lem}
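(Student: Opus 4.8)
The plan is to prove the two implications separately, the point being that the substance of the lemma — and the source of the dimension bound — is the concrete GNS-style construction underlying Stinespring's theorem, not Stinespring's theorem as a black box.

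For the implication (i) $\Rightarrow$ (ii), I would start from a finite-dimensional dilation $\varphi(s) = P_H \pi(s)\big|_H$, where $\pi \colon A \to B(K)$ is a unital $*$-homomorphism with $\dim(K) < \infty$. I would then take $B = \pi(A)$, a finite-dimensional unital $C^*$-subalgebra of $B(K)$, let $\sigma \colon A \to B$ be $\pi$ with its codomain corestricted to $B$ (a unital $*$-homomorphism), and define $\psi \colon B \to B(H)$ by $\psi(b) = P_H b \big|_H$. Since $\psi$ is the compression to $H$ of the inclusion $B \hookrightarrow B(K)$, it is unital and completely positive, and clearly $\psi \circ \sigma = \varphi$ on $S$.

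For the implication (ii) $\Rightarrow$ (i), I would run the usual proof of Stinespring's theorem for the u.c.p.\ map $\psi \colon B \to B(H)$ while tracking dimensions. Concretely, equip the algebraic tensor product $B \otimes H$ with the (a priori degenerate) sesquilinear form determined by $\langle b_1 \otimes \xi_1, b_2 \otimes \xi_2 \rangle = \langle \psi(b_2^* b_1) \xi_1, \xi_2 \rangle$, let $\mathcal{N}$ be its null space, and set $K = (B \otimes H)/\mathcal{N}$. Because $\dim(B \otimes H) = \dim(B)\dim(H) < \infty$, no completion is needed and $\dim(K) \le \dim(B)\dim(H)$. The left-multiplication action of $B$ on $B \otimes H$ descends to a unital $*$-homomorphism $\rho \colon B \to B(K)$, and $V \colon H \to K$, $V\xi = [1 \otimes \xi]$, is an isometry satisfying $\psi(b) = V^* \rho(b) V$ for all $b \in B$. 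Setting $\pi = \rho \circ \sigma \colon A \to B(K)$, a unital $*$-homomorphism on a finite-dimensional space, we get for $s \in S$ that $V^* \pi(s) V = V^* \rho(\sigma(s)) V = \psi(\sigma(s)) = \varphi(s)$. Identifying $H$ with the subspace $VH$ of $K$, this reads $\varphi(s) = P_H \pi(s)\big|_H$, so $\varphi$ dilates to a finite-dimensional representation of $A$ with $\dim(K) \le \dim(B)\dim(H)$, which is the final assertion.

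I do not expect any genuine obstacle; the lemma amounts to unwinding definitions. The only points deserving a word of care are that the compression $\psi$ in the first implication is genuinely completely positive — immediate, since compressions of $*$-homomorphisms are completely positive — and that the Stinespring space in the second implication is finite-dimensional with the stated bound, which is transparent from the explicit construction but would be lost if one merely cited Stinespring's theorem.
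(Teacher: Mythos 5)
Your proof is correct and takes essentially the same approach as the paper: for (i) $\Rightarrow$ (ii) the paper simply sets $B = B(K)$ rather than $B = \pi(A)$ (either works), and for (ii) $\Rightarrow$ (i) the paper also appeals to the explicit GNS-style Stinespring construction to extract the bound $\dim(K) \le \dim(B)\dim(H)$, citing \cite[Theorem 4.1]{Paulsen02} rather than writing it out as you did.
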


\begin{proof}
  (i) $\Rightarrow$ (ii) Let $\pi: A \to B(K)$ be a dilation of $\varphi$ on a finite-dimensional
  Hilbert space $K$. Then we define $B = B(K)$, $\sigma = \pi$ and $\psi(b) = P_H b \big|_H$.

  (ii) $\Rightarrow$ (i) The usual proof of Stinespring's dilation theorem
  (see, for example, \cite[Theorem 4.1]{Paulsen02})
  shows that in the setting
  of (ii), there exist a Hilbert space $K$ with $\dim(K) \le \dim(B) \dim(H)$ and a unital $*$-homomorphism
  $\tau: B \to B(K)$ such that $\psi(b) = P_H \tau(b) \big|_H$ for all $b \in B$.
  Then $\tau \circ \sigma$ is a finite-dimensional representation of $A$ that dilates $\varphi$.
\end{proof}

Let $S$ be an operator system.
We will apply the results of the preceding section to the matrix state space of $S$,
which is $\mathbf{X} = (X_n)_{n=1}^\infty$,
where
\begin{equation*}
  X_n = \{ \varphi : S \to M_n: \varphi \text{ is u.c.p.} \}.
\end{equation*}
Identifying the space of linear maps from $S$ to $M_n$ with $M_n(S^*)$, the matrix state space $\mathbf{X}$
becomes a weak-$*$ compact matrix convex set in $S^*$.
Elements of $\mathbf{X}$ are also called \emph{matrix states of $S$}. A matrix state $\varphi: S \to M_n$
is said to be \emph{pure} if for every completely positive linear map $\psi: S \to M_n$
for which $\varphi - \psi$ is completely positive, there is a $\lambda \in [0,1]$ with $\psi = \lambda \varphi$.
It is a theorem of Farenick \cite[Theorem B]{Farenick00} that a matrix state of $S$ is pure if and only if
it is a matrix extreme point of the matrix state space of $S$. By a theorem of Arveson \cite[Corollary 1.4.3]{Arveson69},
a matrix state of a unital $C^*$-algebra $A$ is pure if and only if it dilates to an irreducible
representation of $A$.

The following lemma connects Question \ref{quest:main} to matrix convexity.

\begin{lem}
  \label{lem:fd_dil_matrix_conv}
  Let $A$ be a unital FDI $C^*$-algebra, let $S \subset A$ be an operator system
  and let $\varphi: S \to B(H)$ be a u.c.p.\ map with $\dim(H) < \infty$. Then the following are equivalent:
  \begin{enumerate}[label=\normalfont{(\roman*)}]
    \item $\varphi$ dilates to a finite-dimensional representation of $A$.
    \item $\varphi$ is a matrix convex combination of restrictions of pure matrix states of $A$ to $S$.
  \end{enumerate}
  Moreover, if $A$ is $r$-subhomogeneous and the matrix convex combination in (ii) has length $s$,
  then $\varphi$ dilates to a representation of $A$ on a Hilbert space of dimension
  at most $s r^2 \dim(H)$.
\end{lem}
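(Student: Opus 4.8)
The plan is to pass back and forth between finite-dimensional representations of $A$ and matrix convex combinations of irreducible ones, using Arveson's characterization of pure matrix states together with the FDI hypothesis to keep every Hilbert space in sight finite-dimensional, and then to feed the resulting factorization into Lemma \ref{lem:stinespring} for the dimension bound.

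For (i) $\Rightarrow$ (ii), suppose $\pi : A \to B(K)$ is a finite-dimensional representation dilating $\varphi$, and let $V : H \to K$ be the inclusion isometry, so that $\varphi(s) = V^* \pi(s) V$. Since $\dim(K) < \infty$, I would decompose $K = K_1 \oplus \dots \oplus K_m$ into pairwise orthogonal $\pi$-invariant subspaces on which $\pi$ acts irreducibly; writing $\rho_\ell = \pi|_{K_\ell}$ and letting $\iota_\ell : K_\ell \to K$ be the inclusions gives $\pi(s) = \sum_\ell \iota_\ell \rho_\ell(s) \iota_\ell^*$. Setting $\gamma_\ell = \iota_\ell^* V : H \to K_\ell$, one checks $\sum_\ell \gamma_\ell^* \gamma_\ell = V^* V = I_H$ and $\varphi(s) = \sum_\ell \gamma_\ell^* \rho_\ell(s) \gamma_\ell$ for $s \in S$. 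Each $\rho_\ell$ is an irreducible representation of $A$, hence a pure matrix state of $A$ by Arveson's theorem, so this displays $\varphi$ as a matrix convex combination of restrictions of pure matrix states of $A$.

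For (ii) $\Rightarrow$ (i), write $\varphi = \sum_{i=1}^s \gamma_i^* (\Phi_i|_S) \gamma_i$ with each $\Phi_i : A \to M_{n_i}$ a pure matrix state of $A$, $\gamma_i \in M_{n_i,n}$ where $n = \dim(H)$, and $\sum_i \gamma_i^* \gamma_i = I_n$. By Arveson's theorem each $\Phi_i$ dilates to an irreducible representation $\rho_i : A \to B(K_i)$, and the FDI hypothesis forces $\dim(K_i) < \infty$. Composing the dilation isometries $V_i : \bC^{n_i} \to K_i$ with the $\gamma_i$ yields $T_i = V_i \gamma_i : H \to K_i$ with $\sum_i T_i^* T_i = \sum_i \gamma_i^* \gamma_i = I_H$ and $\varphi(s) = \sum_i T_i^* \rho_i(s) T_i$. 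Now put $B = \bigoplus_{i=1}^s B(K_i)$, a finite-dimensional unital $C^*$-algebra, let $\sigma = \bigoplus_{i=1}^s \rho_i : A \to B$ be the resulting unital $*$-homomorphism, and let $\psi : B \to B(H)$, $\psi\big((b_i)_i\big) = \sum_i T_i^* b_i T_i$, which is u.c.p.\ since it is the restriction to the block-diagonal subalgebra of the isometric compression $X \mapsto T^* X T$ on $B\big(\bigoplus_i K_i\big)$, where $T = (T_i)_i$. Then $\psi \circ \sigma = \varphi$ on $S$, and Lemma \ref{lem:stinespring} yields (i).

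For the quantitative claim, assume $A$ is $r$-subhomogeneous. Then each irreducible $\rho_i$ lives on a space with $\dim(K_i) \le r$, so $\dim(B) = \sum_{i=1}^s (\dim K_i)^2 \le s r^2$, and the estimate $\dim(K) \le \dim(B)\dim(H)$ from Lemma \ref{lem:stinespring} produces a dilation on a Hilbert space of dimension at most $s r^2 \dim(H)$. There is no serious obstacle here beyond bookkeeping; the one point needing attention is the choice $B = \bigoplus_{i=1}^s B(K_i)$ rather than $B\big(\bigoplus_{i=1}^s K_i\big)$ — the latter would inflate the bound by a further factor of $s$ — together with the routine check that compression onto that block-diagonal subalgebra remains u.c.p.
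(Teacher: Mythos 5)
Your proof is correct and follows essentially the same route as the paper: decompose a finite-dimensional dilation into its irreducible summands for (i) $\Rightarrow$ (ii), and for (ii) $\Rightarrow$ (i) dilate each pure matrix state to an irreducible (hence, by FDI, finite-dimensional) representation, assemble the block-diagonal $C^*$-algebra $B = \bigoplus_i B(K_i)$ with a u.c.p.\ map to $B(H)$, and invoke Lemma \ref{lem:stinespring}. Your remark that one must take $\bigoplus_i B(K_i)$ rather than $B(\bigoplus_i K_i)$ to avoid an extra factor of $s$ in the dimension estimate is a nice point the paper leaves implicit.
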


\begin{proof}
  (i) $\Rightarrow$ (ii) Suppose that $\varphi$ dilates
  to a finite-dimensional representation $\pi: A \to B(K)$ of $A$. Thus, there exists an isometry
  $\gamma: H \to K$ so that
  \begin{equation*}
    \varphi(s) = \gamma^* \pi(s) \gamma \quad (s \in S).
  \end{equation*}
  Since $\dim(K) < \infty$, the representation $\pi$ is a finite direct sum of irreducible representations
  $\pi_i: A \to B(K_i)$ of $A$ for $1 \le i \le s$. Then we may regard the isometry $\gamma$ as a column
  \begin{equation*}
    \gamma =
    \begin{bmatrix}
      \gamma_1 \\ \vdots \\ \gamma_s
    \end{bmatrix},
  \end{equation*}
  where $\gamma_i \in B(H,K_i)$, so that
  \begin{equation*}
    \varphi(s) = \sum_{i=1}^s \gamma_i^* \pi_i(s) \gamma_i \quad (s \in S).
  \end{equation*}
  Since irreducible representations of $A$ are pure matrix states
  of $A$ (for instance by \cite[Corollary 1.4.3]{Arveson69}), we see that $\varphi$
  is a matrix convex combination of restrictions of pure matrix states of $A$ to $S$.

  (ii) $\Rightarrow$ (i) Suppose that $\varphi$ is a matrix convex
  combination of restrictions of pure matrix states of $A$ to $S$, say
  \begin{equation*}
    \varphi(s) = \sum_{j=1}^s \gamma_j^* \varphi_j(s) \gamma_j \quad (s \in S),
  \end{equation*}
  where each $\varphi_j: A \to M_{k_j}$ is a pure matrix state.
  Then each $\varphi_j$ dilates to
  an irreducible $*$-representation of $\sigma_j: A \to B(K_j)$ by Corollary 1.4.3 of \cite{Arveson69}.
  Thus, there are isometries $v_j: \bC^{k_j} \to K_j$ such that
  \begin{equation*}
    v_j^* \sigma_j(s) v_j = \varphi_j(s)
  \end{equation*}
  for all $s \in S$.
   Let $\sigma = \sigma_1 \oplus \ldots \oplus \sigma_s$ and let $B = B(K_1) \oplus \ldots \oplus B(K_s)$.
   Since $A$ is FDI, $\dim(K_j) < \infty$ for all $j$, so that $\dim(B) < \infty$.
   Moreover, define
  \begin{equation*}
    \psi: B \to B(H), \quad (b_1,\ldots,b_s) \mapsto \sum_{j=1}^s \gamma_j^* v_j^* b_j v_j \gamma_j.
  \end{equation*}
  Then $\psi$ is u.c.p.\ and $\varphi = \psi \circ \sigma$ on $S$. Thus,
  the implication (ii) $\Rightarrow$ (i) of Lemma \ref{lem:stinespring}
  shows that $\varphi$ dilates to a finite-dimensional representation of $A$.

  To prove the additional assertion, note that if $A$ is $r$-subhomogeneous, then we can assume
  that $\dim(K_j) \le r$ for all $j$, so that $\dim(B) \le sr^2$, hence the dimension bound follows
  from the corresponding dimension bound in Lemma \ref{lem:stinespring}.
\end{proof}

We are now ready to establish our main result, Theorem \ref{thm:main}, which we restate for the reader's convenience.

\begin{thm}
  \label{thm:FDI}
  Let $A$ be a unital FDI $C^*$-algebra, let $S \subset A$ be a finite-dimensional operator system
  and let $\varphi: S \to B(H)$ be a u.c.p map with $\dim(H) < \infty$.
  Then $\varphi$ dilates to a finite-dimensional representation of $A$, that is,
  there exist $K \supset H$ with $\dim(K) < \infty$ and a unital $*$-representation
  $\pi: A \to B(K)$ such that $\varphi(s) = P_H \pi(s) \big|_H$ for all $s \in S$.
\end{thm}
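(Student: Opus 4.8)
The plan is to combine the matrix convexity machinery of Section~\ref{sec:matrix_convex} with the reformulation of finite-dimensional dilation in terms of matrix states provided by Lemma~\ref{lem:fd_dil_matrix_conv}. By that lemma, it suffices to show that $\varphi$, viewed as a point of the matrix state space $\mathbf{X} = (X_n)$ of $S$ (with $\varphi \in X_m$ where $m = \dim(H)$), is a matrix convex combination of restrictions of \emph{pure} matrix states of $A$ to $S$. The strategy is therefore: (1) realize $\mathbf{X}$ as a compact matrix convex set in the finite-dimensional locally convex space $S^*$, which is legitimate precisely because $\dim(S) < \infty$; (2) apply our Minkowski theorem (Theorem~\ref{thm:minkowski}) to write $\varphi$ as a finite matrix convex combination of matrix extreme points of $\mathbf{X}$; and (3) identify these matrix extreme points with restrictions to $S$ of pure matrix states of $A$, so that Lemma~\ref{lem:fd_dil_matrix_conv}(ii) is satisfied.

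For step~(1), recall that $X_n \subset M_n(S^*)$ consists of the u.c.p.\ maps $S \to M_n$; since $S$ is finite-dimensional, $S^*$ is a finite-dimensional vector space, and with its weak-$*$ topology (which here is just the norm topology) each $X_n$ is a compact convex set, and $\mathbf{X}$ is a compact matrix convex set in the sense of Section~\ref{sec:matrix_convex}. For step~(2), Theorem~\ref{thm:minkowski} applies verbatim and gives
\begin{equation*}
  \varphi = \sum_{j=1}^{r} \gamma_j^* \varphi_j \gamma_j,
\end{equation*}
where each $\varphi_j \in X_{k_j}$ is a matrix extreme point of $\mathbf{X}$, the $\gamma_j \in M_{k_j, m}$, and $\sum_j \gamma_j^* \gamma_j = I_m$. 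For step~(3), the key point is the theorem of Farenick~\cite{Farenick00} quoted in the text: a matrix state of the operator system $S$ is a matrix extreme point of its matrix state space if and only if it is pure as a matrix state of $S$. Then one must pass from purity as a matrix state of $S$ to being the restriction of a pure matrix state of $A$: by Arveson's extension theorem each $\varphi_j$ extends to a u.c.p.\ map $\widetilde{\varphi_j}: A \to M_{k_j}$, and since purity is characterized by a dominance/dilation condition, one can choose the extension (or a suitable pure matrix state of $A$ that restricts to something still in the matrix convex combination) to be pure; Arveson's boundary/pure-state extension results ensure that pure matrix states of $S$ extend to pure matrix states of $A$. Feeding the resulting decomposition into implication (ii)~$\Rightarrow$~(i) of Lemma~\ref{lem:fd_dil_matrix_conv}, and using that $A$ is FDI so each irreducible representation dilating a pure matrix state of $A$ is finite-dimensional, yields the finite-dimensional dilation $\pi: A \to B(K)$ with $\dim(K) < \infty$.

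The main obstacle I expect is step~(3): carefully matching the \emph{matrix-convexity} notion of extreme point with the \emph{operator-algebraic} notion of pure matrix state, and then lifting purity from $S$ to $A$ while preserving the matrix convex combination that represents $\varphi$. One has to invoke the right form of Farenick's theorem (matrix extreme $\iff$ pure for operator systems) together with Arveson's theorem that pure matrix states of a $C^*$-algebra are exactly those dilating to irreducible representations, and check that an extension of a pure matrix state of $S$ to a pure matrix state of $A$ exists; this is where the bulk of the genuine content sits, since steps~(1) and~(2) are essentially immediate given the tools already developed. The finite-dimensionality of $S$ is used decisively in step~(1) to place $\mathbf{X}$ inside a finite-dimensional ambient space, without which the Minkowski theorem would not apply; the FDI hypothesis is used only at the very end, to guarantee that the irreducible representations appearing are themselves finite-dimensional.
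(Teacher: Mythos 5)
Your proposal is correct and follows essentially the same route as the paper: view $\varphi$ in the matrix state space of $S$ (a compact matrix convex set in the finite-dimensional space $S^*$), apply Theorem~\ref{thm:minkowski} to decompose it into a finite matrix convex combination of matrix extreme points, identify these with pure matrix states of $S$ that extend to pure matrix states of $A$, and invoke Lemma~\ref{lem:fd_dil_matrix_conv}. The one place where the paper is more direct is step~(3): the paper cites Farenick's Theorem B (\cite{Farenick00}) not only for the equivalence ``matrix extreme $\iff$ pure'' but also for the assertion that each such matrix extreme matrix state of $S$ extends to a \emph{pure} matrix state of $A$, whereas you sketch this extension via general Arveson-type pure-state extension principles and leave it somewhat provisional; since Farenick's Theorem B already packages both facts, the paper's citation closes the step you correctly flagged as carrying the bulk of the content.
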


\begin{proof}
  We regard $\varphi$ as an element of the matrix state space of $S$.
  Since $\dim(S) < \infty$, Minkowski's theorem
  for matrix convex sets (Theorem \ref{thm:minkowski}) implies that $\varphi$ is a finite
  matrix convex combination of matrix states that are matrix extreme, say
  \begin{equation*}
    \varphi = \sum_{j=1}^s \gamma_j^* \varphi_j \gamma_j,
  \end{equation*}
  where $\varphi_j: S \to M_{k_j}$.
  By Theorem B in \cite{Farenick00}, each $\varphi_j$ extends to a pure matrix state on $A$.
  Thus, the implication (ii) $\Rightarrow$ (i) of Lemma \ref{lem:fd_dil_matrix_conv} shows
  that $\varphi$ dilates to a finite-dimensional representation of $A$.
\end{proof}

We also obtain the following quantitative bound in the preceding result in the case of subhomogeneous $C^*$-algebras.

\begin{prop}
  \label{prop:quantitative}
  In the setting of Theorem \ref{thm:FDI}, suppose that $A$ is also $r$-subhomogeneous.
  Then we may achieve that
  \begin{equation*}
    \dim(K) \le r^2 \dim(H)^3 (\dim(S)+1).
  \end{equation*}
  In particular, if $A$ is commutative, we may achieve that
  \begin{equation*}
    \dim(K) \le \dim(H)^3 (\dim(S)+1).
  \end{equation*}
\end{prop}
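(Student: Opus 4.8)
The plan is to re-run the proof of Theorem~\ref{thm:FDI}, but with careful bookkeeping on the length of the matrix convex combination, using the Carath\'{e}odory bound of Theorem~\ref{thm:cara} to trim the decomposition produced by Minkowski's theorem. Set $n = \dim(H)$ and regard $H$ as $\bC^n$, so that $\varphi$ belongs to $X_n$, where $\mathbf{X} = (X_m)_{m \ge 1}$ is the matrix state space of $S$, viewed inside $V := S^*$. Equip $V$ with the real structure $f^*(s) = \overline{f(s^*)}$; then $\dim_{\bR}(V_{\bR}) = \dim_{\bC}(V) = \dim(S) < \infty$, and since positive maps are self-adjoint, $X_m \subset M_m(V)_{sa}$ for every $m \ge 1$.

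First I would combine the classical Minkowski and Carath\'{e}odory theorems at the level of $\Gamma_n(\mathbf{X})$, just as in the proofs of Theorems~\ref{thm:cara} and~\ref{thm:minkowski}. By Lemma~\ref{lem:matrix_convex_Gamma}(c), $\Gamma_n(\mathbf{X})$ is a compact convex subset of the affine subspace $\{(\alpha, v) : \alpha \in (M_n)_{sa},\, \tr(\alpha) = 1,\, v \in M_n(V)_{sa}\}$, whose real dimension is $(n^2 - 1) + n^2 \dim(S)$. Since $(I_n, \varphi) \in \Gamma_n(\mathbf{X})$, Minkowski's theorem (Theorem~\ref{thm:minkowksi_classical}) expresses it as a convex combination of extreme points of $\Gamma_n(\mathbf{X})$, and Carath\'{e}odory's theorem (Theorem~\ref{thm:cara_classical}) then lets us use at most $(n^2 - 1) + n^2 \dim(S) + 1 = n^2 (\dim(S)+1)$ of them. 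As in the proof of Theorem~\ref{thm:minkowski}, Lemma~\ref{lem:matrix_convex_Gamma}(b) allows us to assume the relevant matrices $\gamma_j$ are surjective without changing the length, whereupon Lemma~\ref{lem:extreme_point} shows that the associated matrix states $\varphi_j : S \to M_{k_j}$ are matrix extreme points of $\mathbf{X}$, and Lemma~\ref{lem:Gamma} gives $\varphi = \sum_{j=1}^{s} \gamma_j^* \varphi_j \gamma_j$ with $s \le n^2(\dim(S)+1)$.

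Next, exactly as in the proof of Theorem~\ref{thm:FDI}, each matrix extreme point $\varphi_j$ extends by \cite[Theorem B]{Farenick00} to a pure matrix state of $A$; thus $\varphi$ is a matrix convex combination, of length $s \le \dim(H)^2(\dim(S)+1)$, of restrictions to $S$ of pure matrix states of $A$. Since $A$ is $r$-subhomogeneous, the quantitative conclusion of Lemma~\ref{lem:fd_dil_matrix_conv} provides a dilation of $\varphi$ to a representation of $A$ on a Hilbert space $K$ with
\[
  \dim(K) \le s\, r^2 \dim(H) \le r^2 \dim(H)^3 (\dim(S)+1).
\]
A commutative $C^*$-algebra is $1$-subhomogeneous, so taking $r = 1$ yields the last bound.

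The argument is essentially bookkeeping layered on top of results already established; the only point requiring a little care is to note that $\varphi$ lands in the self-adjoint part of $M_n(S^*)$, so that the sharper self-adjoint Carath\'{e}odory bound $n^2(\dim(S)+1)$ applies rather than the general bound $n^2(2\dim(S)+1)$ --- this is precisely what is needed to reach the stated constant --- together with the routine fact that the surjectivity reduction in Lemma~\ref{lem:matrix_convex_Gamma}(b) does not lengthen the convex combination. I do not expect any genuine obstacle beyond this.
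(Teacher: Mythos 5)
Your proof is correct and takes essentially the same approach as the paper: both bound the length $s$ of the matrix convex combination from Theorem~\ref{thm:FDI} via the self-adjoint Carath\'{e}odory bound (using the real structure on $S^*$ under which matrix states are self-adjoint) and then feed $s \le \dim(H)^2(\dim(S)+1)$ into the quantitative part of Lemma~\ref{lem:fd_dil_matrix_conv}. The only cosmetic difference is that you inline the $\Gamma_n(\mathbf{X})$-level arguments (classical Minkowski plus classical Carath\'{e}odory) rather than citing Theorems~\ref{thm:minkowski} and~\ref{thm:cara}(b) directly, but these theorems are proved by exactly that argument, so the routes coincide.
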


\begin{proof}
  We bound the length $s$ of the matrix convex combination in the proof of Theorem \ref{thm:FDI}
  using Carath\'{e}odory's theorem for matrix convex sets. To this end, recall that the matrix state
  space of $S$ is a matrix convex set in $S^*$. Moreover, $S^*$ has a real structure, given
  by the involution
  \begin{equation*}
    \varphi^*(s) = \ol{\varphi(s^*)} \quad (\varphi \in S^*, s \in S).
  \end{equation*}
  The induced involution on $M_n(S^*) = \operatorname{Hom}(S,M_n)$ is given by $\varphi^*(s)
  = \varphi(s^*)^*$, where $\varphi : S \to M_n$ and $s \in S$.
  Therefore, matrix states of $S$ are self-adjoint with respect
  to the real structure, so part (b) of Theorem \ref{thm:cara}
  applies and yields for the length $s$ of the matrix convex combination the bound
  \begin{equation*}
    s \le \dim(H)^2 (\dim(S)+1).
  \end{equation*}
  Thus, the dimension bound follows from the corresponding bound in Lemma \ref{lem:fd_dil_matrix_conv}.
\end{proof}

\section{Applications}
\label{sec:applications}
In this section, we will explore several consequences of Theorem \ref{thm:main} to
concrete dilation problems in operator theory.

\subsection{Known finite-dimensional dilation theorems}

We already
explained in the introduction how to obtain Egerv\'ary's theorem from Theorem \ref{thm:main}.
In fact, the argument proves a more general result.
Let $A(\bD)$ denote the disc algebra, that is, the algebra
of all holomorphic functions on $\bD $ that extend continuously
to $\ol{\bD}$. Sz.-Nagy's dilation theorem (or von Neumann's inequality) shows
that every contraction $T$ has an $A(\bD)$-functional calculus.

\begin{cor}
  Let $T$ be a contraction on a finite-dimensional Hilbert space $H$
  and let $\cA \subset A(\bD)$ be a finite-dimensional subspace. Then there
  exist a finite-dimensional Hilbert space $K \supset H$ and a unitary operator
  $U$ on $K$ such that
  \begin{equation*}
    f(T) = P_H f(U) \big|_H
  \end{equation*}
  for all $f \in \cA$.
  We may achieve that $\dim(K) \le  2 \dim(H)^3 (\dim(\mathcal{A})+1)$.
\end{cor}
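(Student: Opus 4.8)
The plan is to repeat, essentially verbatim, the deduction of Egerv\'ary's theorem (Theorem \ref{thm:eger}) from Sz.-Nagy's theorem and Theorem \ref{thm:main} that was given in the introduction, replacing the polynomial operator system $\spa\{1, z^k, \ol z^k : 1 \le k \le N\}$ by the operator system generated by $\cA$ inside $C(\bT)$. First I would apply Sz.-Nagy's dilation theorem to obtain a unitary $V$ on a Hilbert space $L \supset H$ dilating $T$. Since every contraction has an $A(\bD)$-functional calculus (by von Neumann's inequality, or directly by a uniform-density argument from the polynomial case), one has $f(T) = P_H f(V)\big|_H$ for \emph{all} $f \in A(\bD)$, not merely for polynomials; this is the only analytic input. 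By continuous functional calculus, $V$ induces a unital $*$-representation $\sigma : C(\bT) \to B(L)$ with $\sigma(z) = V$, and then $\sigma(f) = f(V)$ for every $f$ in the disc algebra, viewed as a subspace of $C(\bT)$ via boundary values.

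Next I would set
\[
  S = \bC 1 + \cA + \ol{\cA} \subset C(\bT),
\]
where $\ol{\cA} = \{\ol f : f \in \cA\}$ and functions in $A(\bD)$ are identified with their boundary values; this is a unital self-adjoint subspace, hence an operator system, and since complex conjugation is a (conjugate-linear) bijection $\cA \to \ol{\cA}$ we have $\dim S \le 2\dim \cA + 1 < \infty$. The compression $\varphi : S \to B(H)$, $\varphi(g) = P_H \sigma(g)\big|_H$, is u.c.p., and for $f \in \cA$ it satisfies $\varphi(f) = P_H f(V)\big|_H = f(T)$.

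Finally I would apply Proposition \ref{prop:quantitative} (the quantitative form of Theorem \ref{thm:main}) to the commutative, hence FDI, $C^*$-algebra $C(\bT)$: this produces a finite-dimensional Hilbert space $K \supset H$ with
\[
  \dim K \le \dim(H)^3(\dim S + 1) \le 2\dim(H)^3(\dim\cA + 1)
\]
and a unital $*$-representation $\pi : C(\bT) \to B(K)$ with $\varphi(g) = P_H \pi(g)\big|_H$ for all $g \in S$. Put $U = \pi(z)$; since $z$ is unitary in $C(\bT)$ and $\pi$ is a unital $*$-homomorphism, $U$ is unitary on $K$, and because the spectrum of $U$ lies in $\bT$, the restriction of $\pi$ to the disc algebra is precisely the $A(\bD)$-functional calculus of $U$, so $\pi(f) = f(U)$ for $f \in \cA$. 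Consequently $f(T) = \varphi(f) = P_H \pi(f)\big|_H = P_H f(U)\big|_H$ for all $f \in \cA$, which is the claim. There is no genuine obstacle here: the only steps needing a line of justification are the upgrade from the polynomial functional calculus of $V$ to the full $A(\bD)$-functional calculus and the identity $\pi(f) = f(U)$ (both routine density arguments), while the dimension estimate is mere bookkeeping via $\dim S + 1 \le 2(\dim\cA+1)$.
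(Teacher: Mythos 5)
Your proof is correct and follows essentially the same route as the paper: the paper's own proof simply says to argue exactly as in the introduction's deduction of Egerv\'ary's theorem, using the operator system $S = \operatorname{span}\{1, f, \overline f : f \in \mathcal{A}\} \subset C(\mathbb{T})$ and reading off the dimension bound from Proposition \ref{prop:quantitative} via $\dim S \le 2\dim\mathcal{A} + 1$. You have spelled out the same argument in more detail, including the routine upgrade from the polynomial to the $A(\mathbb{D})$-functional calculus, and your bookkeeping for the dimension bound is correct.
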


\begin{proof}
  We argue exactly as in the introduction, but this time using the operator system
  \begin{equation*}
    S = \spa \{ 1 ,f, \ol{f} : f \in \cA \} \subset C(\bT).
  \end{equation*}
  The dimension bound follows from Proposition \ref{prop:quantitative} as $\dim(S) \le 2 \dim(\mathcal{A})+1$.
\end{proof}

%Note for us: McCarthy and Shalit ostensibly get a slightly better bound, but they seem to be missing a factor of 2.
%In the discussion after Equation (2.3), I don't think one can restrict to i \le j because one multiplies by q, which destroys self-adjointness

A similar argument proves the theorem of \mcc--Shalit \cite[Theorem 1.2]{MS13}
about dilations of tuples of commuting contractive
matrices.

\begin{cor}[\mcc--Shalit]
  \label{cor:ms_dil}
  Let $T = (T_1,\ldots,T_d)$ be a tuple of
  commuting contractions on a finite-dimensional Hilbert space $H$
  that dilates to a tuple of commuting unitaries.
  Let $\cP \subset \bC[z_1,\ldots,z_d]$ be a finite-dimensional subspace. 
  Then there exist a finite-dimensional Hilbert space $K \supset H$ and a tuple
  of commuting unitaries $U = (U_1,\ldots,U_d)$ on $K$ such that
  \begin{equation*}
    p(T) = P_H p(U) \big|_H
  \end{equation*}
  for all $p \in \cP$.
  We may achieve that $\dim(K) \le  2 \dim(H)^3 (\dim(\mathcal{P})+1)$.
\end{cor}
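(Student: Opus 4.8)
The plan is to follow verbatim the template used in the introduction to derive Egerv\'ary's theorem from Sz.-Nagy's theorem, with $C(\bT)$ replaced by $C(\bT^d)$. By hypothesis, $T = (T_1,\dots,T_d)$ dilates to a tuple $V = (V_1,\dots,V_d)$ of commuting unitaries on a (generally infinite-dimensional) Hilbert space $L \supset H$, i.e.\ $p(T) = P_H\, p(V)\big|_H$ for all $p \in \bC[z_1,\dots,z_d]$. First I would observe that since $V_1,\dots,V_d$ are commuting unitaries, they generate a unital commutative $C^*$-subalgebra of $B(L)$; equivalently, by the joint spectral theorem (or the universal property of $C(\bT^d)$ as the universal unital $C^*$-algebra generated by $d$ commuting unitaries), there is a unital $*$-representation $\sigma \colon C(\bT^d) \to B(L)$ with $\sigma(z_j) = V_j$ for $1 \le j \le d$, so that $\sigma(p) = p(V)$ for every polynomial $p$.

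Next I would set $S = \spa\bigl(\{1\} \cup \mathcal{P} \cup \overline{\mathcal{P}}\bigr) \subset C(\bT^d)$, where $\overline{\mathcal{P}} = \{\overline{f} : f \in \mathcal{P}\}$; this is a finite-dimensional operator system with $\dim(S) \le 2\dim(\mathcal{P}) + 1$. Define $\varphi \colon S \to B(H)$ by $\varphi(f) = P_H\, \sigma(f)\big|_H$. Being a compression of a unital $*$-homomorphism, $\varphi$ is u.c.p., and it satisfies $\varphi(p) = P_H\, p(V)\big|_H = p(T)$ for all $p \in \mathcal{P}$. Now apply Theorem \ref{thm:main} to the commutative (hence FDI) $C^*$-algebra $C(\bT^d)$, the finite-dimensional operator system $S$, and the u.c.p.\ map $\varphi$: this yields a finite-dimensional Hilbert space $K \supset H$ and a unital $*$-representation $\pi \colon C(\bT^d) \to B(K)$ with $\varphi(f) = P_H\, \pi(f)\big|_H$ for all $f \in S$. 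Set $U_j = \pi(z_j)$. Then $U = (U_1,\dots,U_d)$ is a tuple of commuting unitaries, being the images of commuting unitaries under a unital $*$-homomorphism, and for $p \in \mathcal{P}$ we get $p(T) = \varphi(p) = P_H\, p(U)\big|_H$, as desired. For the dimension estimate I would invoke the commutative case of Proposition \ref{prop:quantitative}, giving $\dim(K) \le \dim(H)^3(\dim(S)+1) \le \dim(H)^3\bigl(2\dim(\mathcal{P})+2\bigr) = 2\dim(H)^3(\dim(\mathcal{P})+1)$.

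There is no serious obstacle once Theorem \ref{thm:main} and Proposition \ref{prop:quantitative} are in hand: the argument is a routine instance of the principle that an operator-system compression of a representation can be re-dilated in finite dimensions. The only points requiring a little care are the passage from the commuting unitary dilation of $T$ to a representation of $C(\bT^d)$ (the joint spectral theorem) and the bookkeeping $\dim(S) \le 2\dim(\mathcal{P})+1$. It is worth emphasizing that the hypothesis that $T$ admit \emph{some} commuting unitary dilation is exactly what allows us to get started (it is automatic for $d \le 2$ by And\^o's theorem, but not in general), and that the commutativity of $U_1,\dots,U_d$ comes for free because we work inside the commutative $C^*$-algebra $C(\bT^d)$ --- this is precisely why encoding the problem in an operator system is the right move, rather than attempting to build the commuting unitaries directly.
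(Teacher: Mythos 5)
Your proof is correct and follows essentially the same route as the paper's: pass from the commuting unitary dilation to a representation $\sigma$ of $C(\bT^d)$, compress to get the u.c.p.\ map $\varphi$ on the operator system $S = \spa\{1, f, \overline{f}: f \in \cP\}$, apply Theorem \ref{thm:main}, and read off $U_j = \pi(z_j)$; the dimension bookkeeping via Proposition \ref{prop:quantitative} also matches. Nothing to add.
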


\begin{proof}
  We apply Theorem \ref{thm:main} to the $C^*$-algebra $A = C(\bT^d)$ and the operator
  system
  \begin{equation*}
    S = \spa \{1, f, \ol{f} : f \in \cP \} \subset C(\bT^d).
  \end{equation*}
  If $V = (V_1,\ldots,V_d)$ is a tuple of commuting unitaries on $L \supset H$ that dilates $T$,
  then $V$ induces a $*$-representation
  \begin{equation*}
    \sigma: C(\bT^d) \to B(L) \quad \text{ with } \sigma(p) = p(V)
  \end{equation*}
  for all $p \in \bC[z_1,\ldots,z_d]$, hence
  \begin{equation*}
    \varphi: S \to B(H), \quad f \mapsto P_H \sigma(f) \big|_H,
  \end{equation*}
  is u.c.p.\ and satisfies $\varphi(p) = p(T)$ for all $p \in \cP$. Theorem \ref{thm:main} yields
  a finite-dimensional Hilbert space $K \supset H$ and a dilation $\pi: C(\bT^d) \to B(K)$ of $\varphi$.
  Defining $U_i = \pi(z_i)$ for $1 \le i \le d$, we obtain the desired dilation.
  The dimension bound once again follows from Proposition \ref{prop:quantitative}.
\end{proof}

As mentioned in \cite{MS13}, the existence of a unitary dilation is automatic if $d = 2$ by And\^o's dilation
theorem.

\mcc\ and Shalit also prove a theorem regarding regular dilations.
This is a stronger notion of dilation to commuting unitaries. While
there is no simple characterization of those tuples of commuting contractions
that admit a unitary dilation, a clean
characterization of those tuples that admit a regular dilation
is known, see \cite[Section I.9]{SFB+10}.

Let $T = (T_1,\ldots,T_d)$ be a tuple of commuting contractions on $H$. If $n \in \bZ^n$, let $n^+ = \max(n,0)$
and $n^- = -\min(n,0)$, where $\max$ and $\min$ are understood entrywise. Thus, $n^+$
is the $d$-tuple of non-negative integers obtained from $n$ by setting all negative entries equal to $0$,
and $n^-$ is the $d$-tuple of non-negative integers obtained from $-n$ by setting all negative entries
equal to $0$. Define $T(n) = (T^*)^{n^-} T^{n^+}$ for $n \in \bZ$. With this definition,
a regular unitary dilation of $T$ is a tuple of commuting unitaries $U$ on a Hilbert space $K \supset H$
such that
\begin{equation*}
  T(n) = P_H U^n \big|_H
\end{equation*}
for all $n \in \bZ^d$.
We can also obtain the result of \mcc\ and Shalit regarding finite
dimensional regular dilations \cite[Theorem 1.7]{MS13} from Theorem \ref{thm:main}.

\begin{cor}[\mcc--Shalit]
  Let $T = (T_1,\ldots,T_d)$ be a tuple of commuting contractions
  on a finite-dimensional Hilbert space $H$ that admits a regular unitary dilation.
  Let $Z \subset \bZ^d$ be a finite subset. Then there exist a finite-dimensional Hilbert space $K \supset H$
  and a tuple of commuting unitaries $U=(U_1,\ldots,U_d)$ on $K$ such that
  \begin{equation*}
    T(n) = P_H U^n \big|_H
  \end{equation*}
  for all $n \in Z$.
  We may achieve that $\dim(K) \le  2 \dim(H)^3 (|Z|+1)$.
\end{cor}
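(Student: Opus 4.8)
The plan is to mimic the proof of Corollary~\ref{cor:ms_dil}, replacing the operator system of polynomials on $\bT^d$ by one built from the monomials $z^n$ for $n \in Z$. The key point is to choose a $C^*$-algebra and an operator system that simultaneously encode the relations $T(n) = P_H U^n\big|_H$ for $n \in Z$ and that is finite-dimensional.

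First I would work with the commutative $C^*$-algebra $A = C(\bT^d)$. For $n \in \bZ^d$, write $z^n \in C(\bT^d)$ for the corresponding character (Laurent monomial); note that on $\bT^d$ one has $\ol{z^n} = z^{-n}$. Define the finite-dimensional operator system
\begin{equation*}
  S = \spa\{ z^n : n \in Z \cup (-Z) \cup \{0\}\} \subset C(\bT^d),
\end{equation*}
which is self-adjoint and unital, with $\dim(S) \le 2|Z| + 1$. Suppose $V = (V_1,\ldots,V_d)$ is a regular unitary dilation of $T$ on $L \supset H$; then $V$ induces a $*$-representation $\sigma: C(\bT^d) \to B(L)$ with $\sigma(z^n) = V^n$ for all $n \in \bZ^d$. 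Hence the compression $\varphi: S \to B(H)$, $f \mapsto P_H \sigma(f)\big|_H$, is u.c.p.\ and satisfies $\varphi(z^n) = P_H V^n\big|_H = T(n)$ for all $n \in Z$.

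Now apply Theorem~\ref{thm:main} (in the form of Proposition~\ref{prop:quantitative}, since $C(\bT^d)$ is commutative) to obtain a finite-dimensional Hilbert space $K \supset H$ and a $*$-representation $\pi: C(\bT^d) \to B(K)$ with $\varphi(f) = P_H \pi(f)\big|_H$ for all $f \in S$, where $\dim(K) \le \dim(H)^3(\dim(S)+1) \le 2\dim(H)^3(|Z|+1)$. Set $U_i = \pi(z_i)$ for $1 \le i \le d$. Since $\pi$ is a unital $*$-homomorphism and the $z_i$ are commuting unitaries in $C(\bT^d)$, the tuple $U = (U_1,\ldots,U_d)$ consists of commuting unitaries on $K$, and $\pi(z^n) = U^n$ for all $n \in \bZ^d$. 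Therefore, for $n \in Z$,
\begin{equation*}
  T(n) = \varphi(z^n) = P_H \pi(z^n)\big|_H = P_H U^n\big|_H,
\end{equation*}
as required. The only mild subtlety is bookkeeping: one must include both $Z$ and $-Z$ (together with $0$) in $S$ to make $S$ self-adjoint, and I expect this to be the single place that needs care; everything else is a direct transcription of the earlier argument, so there is no genuine obstacle.
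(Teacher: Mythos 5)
Your proposal is correct and follows essentially the same route as the paper: same algebra $C(\bT^d)$, same operator system (your $\spa\{z^n : n \in Z \cup (-Z) \cup \{0\}\}$ is exactly the paper's $\spa\{1, z^n, \ol{z}^n : n \in Z\}$ since $\ol{z^n} = z^{-n}$ on the torus), and the same invocation of Theorem~\ref{thm:main} and Proposition~\ref{prop:quantitative}. The only cosmetic difference is that you define $\varphi$ directly as a compression of the representation induced by the regular dilation and then verify $\varphi(z^n) = T(n)$, whereas the paper defines $\varphi$ by that formula and uses the regular dilation to justify complete positivity; these are the same argument.
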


\begin{proof}
  We apply Theorem \ref{thm:main} to the $C^*$-algebra $A = C(\bT^d)$, the operator system
  \begin{equation*}
    S = \spa \{ 1, z^n , \ol{z}^n: n \in Z \} \subset C(\bT^d)
  \end{equation*}
  and the unital map $\varphi: S \to B(H)$ defined by
  $\varphi(z^n) = T(n)$ for $n \in Z \cup -Z \cup \{0\}$, and extended linearly.
  The assumption that $T$ admits a regular unitary dilation shows that $\varphi$ dilates
  to a representation of $C(\bT^d)$, and hence is completely positive.
  Theorem \ref{thm:main} yields a finite-dimensional Hilbert space $K \supset H$ and
  a dilation $\pi: A \to B(K)$ of $\varphi$, so defining $U_i = \pi(z_i)$ for $1 \le i \le d$
  as before and appealing to Proposition \ref{prop:quantitative} for the dimension bound finishes the proof.
\end{proof}

Corollary \ref{cor:ms_dil} was extended by Cohen \cite{Cohen15} in the following way.
Let $X \subset \bC^d$ be a compact set and let $T = (T_1,\ldots,T_d)$ be a tuple of commuting
operators on $H$. A polynomial normal $\partial X$-dilation of $T$ is a $d$-tuple
of commuting normal operators $N = (N_1,\ldots,N_d)$ on a Hilbert space $K \supset H$
with $\sigma(N) \subset \partial X$ such that
\begin{equation*}
  p(T) = P_H p(N) \big|_H
\end{equation*}
for all $p \in \bC[z_1,\ldots,z_d]$.
(Here, the spectrum is computed in the unital commutative $C^*$-algebra generated by $N_1,\ldots,N_d$.)
Our abstract dilation result also implies Cohen's finite-dimensional
dilation theorem.

\begin{cor}[Cohen]
  \label{cor:Cohen}
  Let $T = (T_1,\ldots,T_d)$ be a tuple of commuting operators on a finite-dimensional Hilbert space $H$
  that admits a polynomial normal $\partial X$-dilation. Let $\cP \subset \bC[z_1,\ldots,z_d]$ be a finite
  dimensional subspace. Then there exist a finite-dimensional Hilbert space $K \supset H$ and a tuple
  $N=(N_1,\ldots,N_d)$ of commuting normal operators on $K$ with $\sigma(N) \subset \partial X$ such that
\begin{equation*}
  p(T) = P_H p(N) \big|_H
\end{equation*}
for all $p \in \cP$.
  We may achieve that $\dim(K) \le  2 \dim(H)^3 (\dim(\mathcal{P})+1)$.
\end{cor}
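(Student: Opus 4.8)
The plan is to follow the template of Corollary~\ref{cor:ms_dil} and the subsequent corollaries, applying Theorem~\ref{thm:main} to the commutative $C^*$-algebra $A = C(\partial X)$. Since $X \subset \bC^d$ is compact, $\partial X$ is a compact subset of $\bC^d$, so $C(\partial X)$ is a unital commutative --- hence liminal, hence FDI --- $C^*$-algebra. Write $w_i \in C(\partial X)$ for the restriction of the $i$-th coordinate function to $\partial X$, and set
\begin{equation*}
  S = \spa\{ 1, f, \ol{f} : f \in \cP \} \subset C(\partial X),
\end{equation*}
which is a unital self-adjoint subspace with $\dim(S) \le 2\dim(\cP) + 1$, i.e.\ a finite-dimensional operator system.

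First I would manufacture the u.c.p.\ map on $S$ from the hypothesised dilation. Let $M = (M_1,\ldots,M_d)$ be a polynomial normal $\partial X$-dilation of $T$, acting on a Hilbert space $L \supset H$, so that the $M_i$ are commuting normal operators with joint spectrum $\sigma(M) \subset \partial X$. The joint functional calculus of $M$ (integration against its joint spectral measure), composed with the restriction homomorphism $C(\partial X) \to C(\sigma(M))$, yields a unital $*$-representation
\begin{equation*}
  \rho : C(\partial X) \to B(L), \qquad \rho(w_i) = M_i,
\end{equation*}
and hence $\rho(p) = p(M)$ for every $p \in \bC[z_1,\ldots,z_d]$. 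Then
\begin{equation*}
  \varphi : S \to B(H), \qquad \varphi(g) = P_H\, \rho(g)\big|_H,
\end{equation*}
is u.c.p., and the dilation property of $M$ gives $\varphi(p) = P_H\, p(M)\big|_H = p(T)$ for all $p \in \cP$.

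Next I would invoke Theorem~\ref{thm:main} with $A = C(\partial X)$, the operator system $S$ and the map $\varphi$: since $A$ is unital FDI, $S$ is finite-dimensional and $\dim(H) < \infty$, there exist a finite-dimensional Hilbert space $K \supset H$ and a unital $*$-representation $\pi : C(\partial X) \to B(K)$ with $\varphi(g) = P_H\, \pi(g)\big|_H$ for all $g \in S$. Put $N_i = \pi(w_i)$. As $\pi$ is a $*$-homomorphism, $N = (N_1,\ldots,N_d)$ is a tuple of commuting normal operators; since $\dim(K) < \infty$, $\pi$ is a finite direct sum of characters of $C(\partial X)$, i.e.\ of point evaluations at points of $\partial X$, so $N$ is simultaneously diagonalisable with joint eigenvalues lying in $\partial X$, whence $\sigma(N) \subset \partial X$. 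For $p \in \cP$ one has $p(N) = \pi(p)$, so $P_H\, p(N)\big|_H = \varphi(p) = p(T)$, as required. Finally, Proposition~\ref{prop:quantitative}, applied to the commutative algebra $C(\partial X)$ and combined with $\dim(S) + 1 \le 2\dim(\cP) + 2$, yields $\dim(K) \le 2\dim(H)^3(\dim(\cP)+1)$.

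The argument is routine once Theorem~\ref{thm:main} is available: it is essentially the bookkeeping already carried out for Egerv\'{a}ry's and \mcc--Shalit's theorems, specialised to $C(\partial X)$. The only points requiring a little care --- and the closest thing to an obstacle --- are (i) that it is precisely the inclusion $\sigma(M) \subset \partial X$ that makes the restriction homomorphism $C(\partial X) \to C(\sigma(M))$ available, so that $M$ induces a representation of $C(\partial X)$ rather than merely of $C(\sigma(M))$, and (ii) the standard fact that a $*$-representation of a commutative $C^*$-algebra on a finite-dimensional space is a finite direct sum of characters, which is exactly what confines the joint spectrum of $N$ to $\partial X$. I do not expect any genuine difficulty beyond these.
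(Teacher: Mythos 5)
Your proof is correct and follows essentially the same route as the paper: apply Theorem~\ref{thm:main} (and Proposition~\ref{prop:quantitative}) to $A = C(\partial X)$, the operator system $S = \spa\{1, p, \ol{p} : p \in \cP\}$, and the u.c.p.\ map induced on $S$ by the given $\partial X$-dilation. The paper's version is terser, leaving the construction of $\varphi$ and the verification of $\sigma(N) \subset \partial X$ to the reader, but the underlying argument is identical.
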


\begin{proof}
  We apply Theorem \ref{thm:main} and Proposition \ref{prop:quantitative} to the $C^*$-algebra $A = C(\partial X)$, the
  operator system
  \begin{equation*}
    S = \spa \{1 , p ,\ol{p} : p \in \cP \} \subset C(\partial X) 
  \end{equation*}
  and the unique u.c.p.\ map $\varphi: S \to B(H)$ satisfying $\varphi(p) = p(T)$
  for all $p \in \cP$.
\end{proof}

We also obtain the following result of Davidson, Dor-On, Shalit and Solel \cite[Theorem 7.1]{DDS+16}
as a consequence.

\begin{cor}[Davidson--Dor-On--Shalit--Solel]
  Let $X = (X_1,\ldots,X_d)$ be a tuple of (not necessarily commuting) operators on a finite-dimensional
  Hilbert space $H$ for which there exist a Hilbert space $L \supset H$ and a tuple $N = (N_1,\ldots,N_d)$
  of commuting normal operators on $L$ such that $X_i = P_H N_i \big|_H$ for $1 \le i \le d$.
  Then there exist a finite-dimensional Hilbert space $K \supset H$ and a tuple $Y = (Y_1,\ldots,Y_d)$
  of commuting normal operators on $K$ with $\sigma(Y) \subset \sigma(N)$ such that
  $X_i = P_H Y_i \big|_H$ for $1 \le i \le d$. We may achieve that $\dim(K) \le 2 \dim(H)^3 (d+1)$.
\end{cor}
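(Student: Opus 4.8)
The plan is to follow the same template used for Corollaries \ref{cor:ms_dil} and \ref{cor:Cohen}, but with the commutative $C^*$-algebra chosen so that its representations encode precisely tuples of commuting normal operators with spectrum inside $\sigma(N)$. Concretely, since the $N_i$ are commuting normal operators on $L$, the joint spectrum $\sigma(N) \subset \bC^d$ is a compact set, and the continuous functional calculus gives a unital $*$-representation $\sigma: C(\sigma(N)) \to B(L)$ with $\sigma(z_i) = N_i$, where $z_i$ denotes the $i$-th coordinate function. Thus we set $A = C(\sigma(N))$.

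First I would define the operator system $S = \spa\{1, z_1, \ldots, z_d, \ol{z_1}, \ldots, \ol{z_d}\} \subset C(\sigma(N))$, which is finite-dimensional with $\dim(S) \le 2d+1$. Then the compression map $\varphi: S \to B(H)$, $\varphi(f) = P_H \sigma(f)\big|_H$, is u.c.p.\ (being a compression of a $*$-representation), and it satisfies $\varphi(z_i) = P_H N_i \big|_H = X_i$ for $1 \le i \le d$. Applying Theorem \ref{thm:main} to the commutative (hence FDI) $C^*$-algebra $A = C(\sigma(N))$ yields a finite-dimensional Hilbert space $K \supset H$ and a unital $*$-representation $\pi: C(\sigma(N)) \to B(K)$ with $\varphi(f) = P_H \pi(f)\big|_H$ for all $f \in S$. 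Setting $Y_i = \pi(z_i)$ gives a tuple of commuting normal operators on $K$ (they commute and are normal because $\pi$ is a unital $*$-homomorphism of a commutative $C^*$-algebra), with $\sigma(Y) \subset \sigma(N)$ since $\pi$ factors through $C(\sigma(N))$, and $X_i = P_H Y_i\big|_H$ for each $i$. The dimension bound $\dim(K) \le 2\dim(H)^3(d+1)$ follows from Proposition \ref{prop:quantitative}, using $\dim(S) \le 2d+1$, i.e.\ $\dim(S)+1 \le 2(d+1)$.

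I expect no serious obstacle here: the entire content is the translation into the u.c.p.\ framework plus an invocation of Theorem \ref{thm:main}. The one point that requires a moment's care is verifying that $\sigma(Y) \subset \sigma(N)$: this holds because $\pi$ is a unital $*$-homomorphism defined on $C(\sigma(N))$, so the image $C^*$-algebra $\pi(C(\sigma(N)))$ is a quotient of $C(\sigma(N))$, whence the joint spectrum of $(Y_1, \ldots, Y_d) = (\pi(z_1), \ldots, \pi(z_d))$ — computed in this commutative $C^*$-algebra — is a closed subset of $\sigma(N)$. The normality and commutativity of the $Y_i$ are likewise automatic from $\pi$ being a $*$-homomorphism of a commutative algebra, so there is really nothing delicate beyond bookkeeping.
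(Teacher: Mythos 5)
Your proof is correct and follows essentially the same route as the paper: same choice $A = C(\sigma(N))$, same operator system $S = \spa\{1, z_i, \overline{z_i}\}$, and the same appeal to Theorem \ref{thm:main} and Proposition \ref{prop:quantitative}. The only cosmetic difference is that you define $\varphi$ as the compression $f \mapsto P_H \sigma(f)|_H$ whereas the paper specifies $\varphi$ on the generators and extends linearly; these are the same map.
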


\begin{proof}
  Let $X = \sigma(N)$.
  We apply Theorem \ref{thm:main} and Proposition \ref{prop:quantitative} to the $C^*$-algebra $A = C(X)$, the operator system
  \begin{equation*}
    S = \spa \{1, z_i, \ol{z_i}: 1 \le i \le d \} \subset C(X)
  \end{equation*}
  and the u.c.p.\ map $\varphi: S \to B(H)$ defined by $\varphi(1) = 1$, $\varphi(z_i) = X_i$
  and $\varphi(\ol{z_i}) = X_i^*$, extended linearly.
\end{proof}

\subsection{Rational dilation}

To illustrate how Theorem \ref{thm:FDI} can be used to prove new finite-dimensional
dilation results, we establish a finite-dimensional version of Agler's theorem \cite{Agler85a}.
For $0 < r < 1$, let
\begin{equation*}
  A_r = \{ z \in \bC: r \le |z| \le 1 \}
\end{equation*}
and let $\Rat(A_r)$ denote the vector space of all rational functions with poles off $A_r$.
If $T$ is a bounded operator on $H$ with $\sigma(T) \subset A_r$,
we say that $A_r$ is a \emph{spectral set for $T$} if $\|f(T)\| \le
\sup_{z \in A_r} |f(z)|$ for all $f \in \Rat(A_r)$.

\begin{cor}
  \label{cor:agler}
  Let $T$ be an operator on a finite-dimensional Hilbert space $H$ such that $A_r$ is a spectral set for $T$.
  Let $\cR \subset \Rat(A_r)$ be a finite-dimensional subspace.
  Then there exist a finite-dimensional Hilbert space $K \supset H$ and a normal
  operator $N$ on $K$ with $\sigma(N) \subset \partial A_r$ such that
  \begin{equation*}
    f(T) = P_H f(N) \big|_H
  \end{equation*}
  for all $f \in \cR$. We may achieve that $\dim(K) \le 2 \dim(H)^3(\dim(\mathcal{R}) + 1)$.
\end{cor}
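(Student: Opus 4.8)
The plan is to follow the same template as the preceding corollaries, with Agler's rational dilation theorem on the annulus \cite{Agler85a} playing the role that Sz.-Nagy's theorem plays for Egerv\'ary's theorem. First I would recall Agler's theorem: if $A_r$ is a spectral set for $T$, then $T$ admits a normal $\partial A_r$-dilation, i.e.\ there exist a (generally infinite-dimensional) Hilbert space $L \supset H$ and a normal operator $M$ on $L$ with $\sigma(M) \subset \partial A_r$ such that $f(T) = P_H f(M)\big|_H$ for all $f \in \Rat(A_r)$. This is the infinite-dimensional input; everything else is the uniform finite-dimensionalization provided by Theorem \ref{thm:FDI}.

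Next I would move to the $C^*$-algebraic picture. Since the poles of functions in $\Rat(A_r)$ lie off $A_r \supset \partial A_r$, restriction embeds $\Rat(A_r)$ into $C(\partial A_r)$, and the continuous functional calculus for the normal operator $M$ yields a unital $*$-representation $\sigma \colon C(\partial A_r) \to B(L)$ with $\sigma(f|_{\partial A_r}) = f(M)$ for every $f \in \Rat(A_r)$ (the continuous functional calculus of a normal operator restricts to the holomorphic functional calculus on $\sigma(M)$). Put
\begin{equation*}
  S = \spa\{1, f, \ol{f} : f \in \cR\} \subset C(\partial A_r),
\end{equation*}
a finite-dimensional operator system with $\dim(S) \le 2\dim(\cR)+1$, and define $\varphi \colon S \to B(H)$ by $\varphi(g) = P_H \sigma(g)\big|_H$. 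As a compression of a unital $*$-homomorphism, $\varphi$ is u.c.p., and $\varphi(f) = P_H f(M)\big|_H = f(T)$ for all $f \in \cR$ (and $\varphi(\ol f) = f(T)^*$).

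Now I would apply Theorem \ref{thm:FDI} to the commutative (hence FDI) $C^*$-algebra $A = C(\partial A_r)$: it produces a finite-dimensional Hilbert space $K \supset H$ and a unital $*$-representation $\pi \colon C(\partial A_r) \to B(K)$ with $\varphi(g) = P_H \pi(g)\big|_H$ for all $g \in S$. Set $N = \pi(z)$. Then $N$ is normal since $z$ is normal in $C(\partial A_r)$, and $\sigma(N) \subset \sigma(z) = \partial A_r$ because $\pi$ is a unital $*$-homomorphism; moreover $\pi(f|_{\partial A_r}) = f(N)$ for every $f \in \Rat(A_r)$ (again by uniqueness of the functional calculus of the normal operator $N$, whose spectrum is contained in $\partial A_r$). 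Hence $f(T) = \varphi(f) = P_H \pi(f)\big|_H = P_H f(N)\big|_H$ for all $f \in \cR$. Finally, the dimension bound is read off from the commutative case of Proposition \ref{prop:quantitative}: $\dim(K) \le \dim(H)^3(\dim(S)+1) \le 2\dim(H)^3(\dim(\cR)+1)$.

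As for the main obstacle: essentially all the analytic content is hidden in Agler's theorem, and the reduction to finite dimensions is entirely standard once Theorem \ref{thm:FDI} is available. The only point requiring a little care is the compatibility of functional calculi — that the image under $\pi$ (or $\sigma$) of the continuous functional calculus on $C(\partial A_r)$ agrees with the rational functional calculus of the normal operator $N$ (resp.\ $M$) on $\Rat(A_r)$; this follows because $f \mapsto \pi(f|_{\partial A_r})$ is a unital algebra homomorphism on $\Rat(A_r)$ sending $z$ to $N$ and $(\zeta - z)^{-1}$ to $(\zeta - N)^{-1}$ for every pole $\zeta \notin A_r$, which characterizes the rational functional calculus.
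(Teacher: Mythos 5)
Your proof is correct and follows essentially the same route as the paper: apply Agler's theorem to obtain an infinite-dimensional normal $\partial A_r$-dilation, pass to the induced u.c.p.\ map on the operator system $S = \spa\{1, f, \ol{f}: f \in \cR\} \subset C(\partial A_r)$, invoke Theorem \ref{thm:FDI} and set $N = \pi(z)$, with the dimension bound coming from Proposition \ref{prop:quantitative}. The extra care you take with the compatibility of the continuous and rational functional calculi is a welcome elaboration of a point the paper treats more tersely (``since $\pi$ is a homomorphism'').
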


\begin{proof}
  We apply Theorem \ref{thm:main} with $A = C(\partial A_r)$ and
  \begin{equation*}
    S = \spa \{ 1, f , \ol{f}: f \in \cR \} \subset C(\partial A_r).
  \end{equation*}
  By Agler's theorem \cite{Agler85a}, there exist a Hilbert space $L \supset H$ and a normal operator $B$
  on $L$ with $\sigma(B) \subset \partial A_r$ so that $f(T) = P_H f(B) \big|_H$ for all $f \in \Rat(A_r)$.
  Since $B$ induces a representation of $C(\partial A_r)$, there exists a u.c.p.\ map
  $\varphi: S \to B(H)$ with $\varphi(f) = f(T)$ for all $f \in \cR$. By Theorem \ref{thm:main},
  $\varphi$ dilates to a finite-dimensional representation $\pi$ of $C(\partial A_r)$,
  so
  \begin{equation*}
    f(T) = \varphi(f) = P_H \pi(f) \big|_H
  \end{equation*}
  for $f \in \cR$.
  If we define
  $N = \pi(z)$, then $f(N) = \pi(f)$ for all $f \in \Rat(A_r)$ since $\pi$ is a homomorphism,
  so $N$ has the required properties. The dimension bound is once again a consequence of Proposition \ref{prop:quantitative}.
\end{proof}

We can in particular apply Corollary \ref{cor:agler} for each $k \in \bN$ to the space
$\cR = \spa \{ z^n: -k \le n \le k \}$
to obtain a normal operator $N$ on a finite-dimensional space with
\begin{equation*}
  T^n = P_H N^n \big|_H
\end{equation*}
for all $-k \le n \le k$.

For compact subsets $X \subset \bC$ with more than one hole, it is in general no longer true
that every operator for which $X$ is a spectral set dilates to a normal operator with spectrum
in $\partial X$; see \cite{AHR08,DM05a}.
In fact, there are typically finite-dimensional counterexamples; see \cite[Section 7]{DM05a}.

For general compact subsets $X$ of $\mathbb{C}$ or of $\mathbb{C}^d$,
Theorem \ref{thm:main} implies a version of Corollary \ref{cor:Cohen}
for rational dilation.
The authors are grateful to Michael Dritschel and to an anonymous referee
for asking questions that led to the inclusion of this result.

Let $X \subset \mathbb{C}^d$ be compact and let
\begin{equation*}
  \Rat(X) = \Big\{ \frac{p}{q} : p, q \in \mathbb{C}[z_1,\ldots,z_d] \text{ and } q(z) \neq 0 \text{ for all } z \in X \Big\}.
\end{equation*}
Let $T = (T_1,\ldots,T_d)$ be a tuple of commuting operators on $H$
whose Taylor spectrum $\sigma_{T}(T)$ is contained in $X$ (see \cite[Chapter IV]{Mueller07} for background
on the Taylor spectrum; in finite dimensions, the Taylor spectrum agrees with various other notions of spectrum).
A rational normal $\partial X$-dilation of $T$ is a $d$-tuple of commuting normal operators
$N = (N_1,\ldots,N_d)$ on a Hilbert space $K \supset H$ with $\sigma(N) \subset \partial X$
such that
\begin{equation*}
  f(T) = P_H f(N) \big|_H
\end{equation*}
for all $f \in \Rat(X)$. (Here, $f(T)$ can be defined by using that $q(T)$ is invertible
if $q$ is a polynomial that does not vanish on $X$, which follows from the spectral
mapping property of the Taylor spectrum; see \cite[Corollary 30.11]{Mueller07}.)

\begin{cor}
  \label{cor:rational_general}
  Let $X \subset \bC^d$ be a compact set and let $T$ be an operator on a finite-dimensional
  Hilbert space $H$
  with $\sigma_T(T) \subset X$ that admits a rational normal $\partial X$-dilation.
  Let $\cR \subset \Rat(X)$ be a finite-dimensional subspace. Then there exist
  a finite-dimensional Hilbert space $K \supset H$ and a tuple $N = (N_1,\ldots,N_d)$
  of commuting normal operators on $K$
  with $\sigma(N) \subset \partial X$ such that
  \begin{equation*}
    f(T) = P_H f(N) \big|_H
  \end{equation*}
  for all $f \in \cR$. We may achieve that $\dim(K) \le 2 \dim(H)^3 (\dim(\mathcal{R}) + 1)$.
\end{cor}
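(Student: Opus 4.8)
The plan is to apply Theorem~\ref{thm:main} exactly as in the proofs of Corollaries~\ref{cor:Cohen} and~\ref{cor:agler}, this time with the commutative---and hence FDI---$C^*$-algebra $A = C(\partial X)$. Since every $f \in \Rat(X)$ has its poles off $X$, and therefore off the compact set $\partial X$, each such $f$ restricts to a continuous function on $\partial X$; thus $\cR$ may be viewed as a finite-dimensional subspace of $C(\partial X)$, and
\begin{equation*}
  S = \spa\{1, f, \ol{f} : f \in \cR\} \subset C(\partial X)
\end{equation*}
is a finite-dimensional operator system with $\dim(S) \le 2\dim(\cR) + 1$.

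Next I would build the relevant u.c.p.\ map. Let $B = (B_1,\ldots,B_d)$ be a rational normal $\partial X$-dilation of $T$ on some $L \supset H$, which exists by hypothesis. As $B$ is a tuple of commuting normal operators with joint spectrum contained in $\partial X$, the spectral theorem for commuting normal tuples yields a unital $*$-representation $\sigma \colon C(\partial X) \to B(L)$ with $\sigma(z_i) = B_i$ for $1 \le i \le d$; multiplicativity of $\sigma$, together with the fact that $q(B)$ is invertible whenever $q$ does not vanish on $X$, gives $\sigma(f) = f(B)$ for every $f \in \Rat(X)$. Hence
\begin{equation*}
  \varphi \colon S \to B(H), \qquad g \mapsto P_H \sigma(g)\big|_H,
\end{equation*}
is u.c.p., and $\varphi(f) = P_H f(B)\big|_H = f(T)$ for all $f \in \cR$ by the dilation property.

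Then I would apply Theorem~\ref{thm:main} to obtain a finite-dimensional $K \supset H$ and a unital $*$-representation $\pi \colon C(\partial X) \to B(K)$ with $\varphi(g) = P_H \pi(g)\big|_H$ for $g \in S$, and set $N_i = \pi(z_i)$. Being images of commuting normal elements of a commutative $C^*$-algebra under a $*$-homomorphism, $N_1,\ldots,N_d$ are commuting normals; since $\pi$ factors through $C(Y)$ for some closed $Y \subseteq \partial X$, the joint spectrum $\sigma(N)$ equals $Y \subseteq \partial X$. As before, multiplicativity gives $\pi(f) = f(N)$ for all $f \in \Rat(X)$ (note $\sigma(N) \subseteq X$, so $q(N)$ is invertible whenever $q$ does not vanish on $X$), whence $f(T) = \varphi(f) = P_H \pi(f)\big|_H = P_H f(N)\big|_H$ for $f \in \cR$. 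The dimension bound follows from Proposition~\ref{prop:quantitative} with $r = 1$, since $\dim(K) \le \dim(H)^3(\dim(S)+1) \le 2\dim(H)^3(\dim(\cR)+1)$.

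The only step needing genuine care is the functional-calculus bookkeeping: one must confirm that the given infinite-dimensional normal dilation really does induce a representation of $C(\partial X)$ that is compatible with the rational functional calculus of $T$, and that the re-dilated tuple $N = (\pi(z_i))$ has joint spectrum inside $\partial X$, so that $f(N)$ is meaningful and equals $\pi(f)$. Both are standard consequences of the spectral theorem for commuting normal tuples and the spectral mapping property of the Taylor spectrum (see \cite[Corollary 30.11]{Mueller07}); everything else is a direct, by now routine, application of Theorem~\ref{thm:main} and Proposition~\ref{prop:quantitative}.
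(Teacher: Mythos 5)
Your proof is correct and follows essentially the same route as the paper: apply Theorem~\ref{thm:main} with $A = C(\partial X)$ and $S = \spa\{1, f, \overline{f}: f \in \cR\}$, define $N_i = \pi(z_i)$, and invoke Proposition~\ref{prop:quantitative} for the dimension bound. You simply spell out in more detail the functional-calculus bookkeeping (constructing $\varphi$ explicitly from the given dilation and checking $\sigma(N) \subset \partial X$), which the paper leaves terse.
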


\begin{proof}
  We apply Theorem \ref{thm:main} with $A = C(\partial X)$,
  \begin{equation*}
    S = \spa \{1, f, \overline{f}: f \in \mathcal{R} \} \subset C(\partial X)
  \end{equation*}
  and the unique u.c.p.\ map $\varphi: S \to B(H)$ satisfying $\varphi(f) = f(T)$
  for all $f \in \mathcal{R}$. Thus, we obtain a finite-dimensional
  representation $\pi$ of $C(\partial A_r)$ with
  \begin{equation*}
    f(T) = \varphi(f) = P_H \pi(f) \big|_H
  \end{equation*}
  for all $f \in \mathcal{R}$. Defining $N_i = \pi(z_i)$ for $1 \le i \le d$, we
  obtain a tuple $N$ of commuting normal operators with $\sigma(N) \subset \partial X$.
  Since $\pi$ is a homomorphism, $\pi(f) = f(N)$ for all $f \in \Rat(X)$, so $N$
  has all desired properties. The dimension bound once again follows from Proposition \ref{prop:quantitative}.
\end{proof}

\subsection{Unitary \texorpdfstring{$\rho$}{rho}-dilations}

Let $T \in B(H)$ and $\rho > 0$. A \emph{unitary $\rho$-dilation} of $T$ is a unitary operator $U$ on a Hilbert space
$K \supset H$ such that
\begin{equation*}
  T^n = \rho P_H U^n \big|_H \quad \text{ for all } n \ge 1.
\end{equation*}
The class of operators $C_\rho$ that admit a unitary $\rho$-dilation can be characterized
intrinsically, see \cite[Theorem 11.1]{SFB+10}. In particular, $C_1$ consists of all contractions,
and $C_2$ consists of all operators whose numerical radius is at most $1$.
We can also establish the existence of finite-dimensional $\rho$-dilations.
The authors are grateful to John M\textsuperscript{c}Carthy for asking a question that led
to this observation.

\begin{cor}
  \label{cor:rho_dilation}
  Let $T$ be an operator on a finite-dimensional Hilbert space $H$ and let $\rho > 0$.
  Suppose that $T$ admits a unitary $\rho$-dilation and let $N \in \bN$. Then there exist
  a finite-dimensional Hilbert space $K \supset H$ and a unitary operator $U$ on $K$ such that
  \begin{equation*}
    T^n = \rho P_H U^n \big|_H \quad \text{ for } 1 \le n \le N.
  \end{equation*}
  We may achieve that $\dim(K) \le 2 \dim(H)^3 (N+1)$.
\end{cor}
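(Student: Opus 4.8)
The plan is to mimic exactly the pattern used for Egerv\'ary's theorem and the McCarthy--Shalit results: encode the $\rho$-dilation relations into a u.c.p.\ map on a finite-dimensional operator system inside $C(\bT)$, and then invoke Theorem \ref{thm:main} together with Proposition \ref{prop:quantitative}. First I would take the $C^*$-algebra $A = C(\bT)$ and the operator system
\begin{equation*}
  S = \spa \{ 1, z^n, \ol{z}^n : 1 \le n \le N \} \subset C(\bT),
\end{equation*}
which is finite-dimensional with $\dim(S) \le 2N + 1$.

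Next I would build the relevant u.c.p.\ map. Since $T$ admits a unitary $\rho$-dilation, there is a Hilbert space $L \supset H$ and a unitary $V$ on $L$ with $T^n = \rho P_H V^n\big|_H$ for all $n \ge 1$. The continuous functional calculus for $V$ gives a unital $*$-representation $\sigma : C(\bT) \to B(L)$ with $\sigma(z^n) = V^n$. Define $\varphi : S \to B(H)$ by $\varphi(f) = P_H \sigma(f)\big|_H$; this is u.c.p.\ as a compression of a representation. The key point is then to check that $\varphi$ already encodes the $\rho$-dilation: for $1 \le n \le N$ we have $\varphi(z^n) = P_H V^n\big|_H = \rho^{-1} T^n$, and $\varphi(1) = I_H$, so $\varphi(\rho^{-1} z^n \oplus \cdots) $ recovers the powers of $T$ up to the factor $\rho$. (One must be a little careful that the relation is built into $\varphi$ automatically rather than imposed; here it is automatic since $\varphi$ is defined as a compression of the functional calculus of a genuine $\rho$-dilation.)

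Then I would apply Theorem \ref{thm:main} to the commutative (hence FDI) $C^*$-algebra $C(\bT)$, obtaining a finite-dimensional Hilbert space $K \supset H$ and a unital $*$-representation $\pi : C(\bT) \to B(K)$ with $\varphi(f) = P_H \pi(f)\big|_H$ for all $f \in S$. Setting $U = \pi(z)$, which is unitary since $z$ is a unitary in $C(\bT)$ and $\pi$ is a unital $*$-homomorphism, we get for $1 \le n \le N$
\begin{equation*}
  \rho P_H U^n \big|_H = \rho P_H \pi(z^n)\big|_H = \rho\, \varphi(z^n) = T^n,
\end{equation*}
which is exactly the desired finite-dimensional $\rho$-dilation relation. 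Finally, the dimension bound comes straight from Proposition \ref{prop:quantitative} applied to the commutative case: $\dim(K) \le \dim(H)^3(\dim(S) + 1) \le \dim(H)^3(2N + 2) = 2\dim(H)^3(N+1)$.

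I do not expect a genuine obstacle here — the argument is a routine instance of the ``$S$ encodes the relations'' principle already illustrated for Egerv\'ary's theorem. The only mildly delicate point is bookkeeping the normalization factor $\rho$ consistently (it sits outside the compression, not inside the functional calculus, so it never interferes with complete positivity of $\varphi$), and confirming $\dim(S) \le 2N+1$ so that the stated constant $2\dim(H)^3(N+1)$ comes out correctly from Proposition \ref{prop:quantitative}.
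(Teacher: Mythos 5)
Your argument is exactly the paper's: encode the relations in the operator system $S = \spa\{1, z^n, \ol{z}^n : 1 \le n \le N\} \subset C(\bT)$, compress the functional calculus of the unitary $\rho$-dilation to get the u.c.p.\ map $\varphi$ with $\varphi(z^n) = \rho^{-1} T^n$, and apply Theorem \ref{thm:main} and Proposition \ref{prop:quantitative}. The bookkeeping of $\rho$ and the bound $\dim(S) \le 2N+1$ are handled correctly, so there is nothing to add.
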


\begin{proof}
  As in the proof of Eger\'ary's theorem, we apply Theorem \ref{thm:main} to
  $A = C(\bT)$ and the operator system
  \begin{equation*}
    S = \spa \{ 1 , z^n, \ol{z}^n: 1 \le n \le N \} \subset C(\bT),
  \end{equation*}
  but to a different u.c.p.\ map. Let $V$ be a unitary $\rho$-dilation on a Hilbert
  space $K \supset H$, let $\sigma: C(\bT) \to B(L)$ be the corresponding
  representation satisfying $\sigma(p) = p(V)$ for all $p \in \bC[z]$ and let
  \begin{equation*}
    \varphi: S \to B(H), \quad f \mapsto P_H \sigma(f) \big|_H.
  \end{equation*}
  Then $\varphi$ is u.c.p.\ and satisfies $\varphi(z^n) = \rho^{-1} T^n$ for $1 \le n \le N$.
  By Theorem \ref{thm:main}, there exist a finite-dimensional Hilbert space $K \supset H$
  and a representation $\pi: C(\bT) \to B(K)$ that dilates $\varphi$. Let $U = \pi(z)$. Then $U$
  is unitary and $T^n = \rho \varphi(z^n) = \rho P_H U^n \big|_H$ for all $1 \le n \le N$.
  The dimension bound follows from Proposition \ref{prop:quantitative}.
\end{proof}

In particular, setting $\rho = 2$, we obtain the following finite-dimensional version of Berger's dilation
theorem \cite{Berger65}.

\begin{cor}
  \label{cor:berger}
  Let $T$ be an operator on a finite-dimensional Hilbert space $H$ with numerical radius
  at most $1$. Let $N \in \bN$.
   Then there exist
  a finite-dimensional Hilbert space $K \supset H$ and a unitary operator $U$ on $K$ such that
  \begin{equation*}
    \pushQED{\qed}
    T^n = 2 P_H U^n \big|_H \quad \text{ for } 1 \le n \le N. \qedhere
    \popQED
  \end{equation*}
\end{cor}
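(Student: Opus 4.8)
The plan is to obtain this as the special case $\rho = 2$ of Corollary \ref{cor:rho_dilation}. The only thing that needs verification is that the hypothesis of that corollary---that $T$ admit a unitary $2$-dilation---is satisfied. This is precisely the assertion that $T$ lies in the class $C_2$, and by the intrinsic characterization of the classes $C_\rho$ recalled above (see \cite[Theorem 11.1]{SFB+10}), membership in $C_2$ is equivalent to the numerical radius of $T$ being at most $1$, which is exactly our assumption.

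Thus the argument would run as follows. First I would observe that $T \in C_2$, so $T$ admits a unitary $2$-dilation on some (a priori infinite-dimensional) Hilbert space by Berger's theorem \cite{Berger65}. Then I would invoke Corollary \ref{cor:rho_dilation} with $\rho = 2$ and the given $N \in \bN$ to produce a finite-dimensional Hilbert space $K \supset H$ and a unitary operator $U$ on $K$ with $T^n = 2 P_H U^n \big|_H$ for $1 \le n \le N$. The proof there passes through the commutative $C^*$-algebra $C(\bT)$ and the finite-dimensional operator system $S = \spa\{1, z^n, \overline{z}^n : 1 \le n \le N\}$, applying Theorem \ref{thm:main}; appealing in addition to Proposition \ref{prop:quantitative}, the same argument records the dimension estimate $\dim(K) \le 2\dim(H)^3(N+1)$, which could be stated here as well.

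There is no genuine obstacle at this level: all of the dilation-theoretic work---reducing the infinite-dimensional Berger dilation to a u.c.p.\ map on a finite-dimensional operator system and then compressing to a finite-dimensional representation via Theorem \ref{thm:main} and the matrix Carath\'{e}odory bound of Theorem \ref{thm:cara}---has already been carried out in the proof of Corollary \ref{cor:rho_dilation}. The only substantive input specific to the present statement is the classical identification of $C_2$ with the operators of numerical radius at most $1$, together with the conceptual point that the finitely many relations $T^n = 2 P_H U^n \big|_H$, $1 \le n \le N$, are exactly what is encoded by the finite-dimensional operator system $S$, so that finite-dimensionality of the dilation space is preserved.
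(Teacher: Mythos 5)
Your proposal is correct and matches the paper's argument exactly: the paper obtains Corollary~\ref{cor:berger} as the specialization $\rho = 2$ of Corollary~\ref{cor:rho_dilation}, using the classical fact (recalled just before the statement) that $C_2$ consists precisely of the operators of numerical radius at most $1$, so the hypothesis of admitting a unitary $2$-dilation is automatic by Berger's theorem.
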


\subsection{Numerical range dilations}

Next, we establish Theorem \ref{thm:PS_intro} regarding dilations of operators with prescribed numerical range.
As mentioned in the introduction, this is a finite-dimensional version of a theorem of Putinar and Sandberg;
see Theorem 2 and the discussion following it in \cite{PS05}.
It generalizes Corollary \ref{cor:berger}, which corresponds to the case where the set
$\Omega$ below is the unit disc.
For the reader's convenience, we restate the result.

\begin{cor}
  \label{cor:PS}
  Let $\Omega \subset \mathbb{C}$ be a bounded open convex set with smooth boundary $\partial \Omega$.
  Let $T$ be an operator on a finite-dimensional Hilbert space $H$ with $W(T) \subset \Omega$
  and let $\mathcal{A} \subset A(\Omega)$ be a finite-dimensional subspace.
  Then there exist a finite-dimensional Hilbert space $K \supset H$ and a normal operator $N$
  on $K$ with $\sigma(N) \subset \partial \Omega$ such that
  \begin{equation*}
    f(T) + (C\overline{f})(T)^* = 2 P_H f(N) \big|_H
  \end{equation*}
  for all $f \in \mathcal{A}$. We may achieve that $\dim(K) \le 2 \dim(H)^3 (\dim(\mathcal{A}) + 1)$.
\end{cor}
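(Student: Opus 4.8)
The plan is to deduce this exactly as the known finite-dimensional dilation theorems were deduced above: feed the infinite-dimensional Putinar--Sandberg dilation into Theorem~\ref{thm:main}, with the commutative $C^*$-algebra $A = C(\partial\Omega)$ as the ambient algebra. Since $H$ is finite-dimensional, $\overline{W(T)}$ is compact and contained in the open set $\Omega$, so $\sigma(T) \subset \overline{W(T)} \subset \Omega$; hence $f(T)$ and $(C\overline{f})(T)$ are defined for $f \in A(\Omega)$ via the holomorphic functional calculus. By the theorem of Putinar and Sandberg \cite{PS05}, there exist a Hilbert space $L \supset H$ and a normal operator $B$ on $L$ with $\sigma(B) \subset \partial\Omega$ such that
\begin{equation*}
  f(T) + (C\overline{f})(T)^* = 2 P_H f(B)\big|_H \qquad (f \in A(\Omega)).
\end{equation*}

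Next I would package this into the hypotheses of Theorem~\ref{thm:main}. Since $\sigma(B) \subset \partial\Omega$, the continuous functional calculus of $B$ gives a unital $*$-representation $\sigma \colon C(\partial\Omega) \to B(L)$ with $\sigma(z) = B$, where $z$ is the coordinate function on $\partial\Omega$. Writing $f \mapsto f|_{\partial\Omega}$ for the restriction to $\partial\Omega$ of the continuous extension of $f$, this map $A(\Omega) \to C(\partial\Omega)$ is injective by the maximum modulus principle. Set
\begin{equation*}
  S = \spa\{1, f|_{\partial\Omega}, \overline{f|_{\partial\Omega}} : f \in \mathcal{A}\} \subset C(\partial\Omega),
\end{equation*}
a finite-dimensional operator system with $\dim(S) \le 2\dim(\mathcal{A}) + 1$, and $\varphi \colon S \to B(H)$, $\varphi(g) = P_H \sigma(g)\big|_H$. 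Being a compression of a $*$-representation, $\varphi$ is u.c.p., and the displayed Putinar--Sandberg identity gives $\varphi(f|_{\partial\Omega}) = \tfrac12\bigl(f(T) + (C\overline{f})(T)^*\bigr)$ for $f \in \mathcal{A}$.

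Applying Theorem~\ref{thm:main} (and Proposition~\ref{prop:quantitative} for the bound, since $C(\partial\Omega)$ is commutative) then yields a finite-dimensional $K \supset H$ and a unital $*$-representation $\pi \colon C(\partial\Omega) \to B(K)$ with $\varphi(g) = P_H \pi(g)\big|_H$ on $S$ and $\dim(K) \le \dim(H)^3(\dim(S)+1) \le 2\dim(H)^3(\dim(\mathcal{A})+1)$. Take $N = \pi(z)$: it is normal, being the image of the normal element $z$ under a $*$-homomorphism, and $\sigma(N) \subseteq \sigma(z) = \partial\Omega$. Since $z$ generates $C(\partial\Omega)$ as a unital $C^*$-algebra, $\pi$ coincides with the continuous functional calculus of $N$ (composed with the restriction $C(\partial\Omega) \to C(\sigma(N))$), so $\pi(f|_{\partial\Omega}) = f(N)$ for $f \in \mathcal{A}$. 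Combining the last few facts gives, for every $f \in \mathcal{A}$,
\begin{equation*}
  \tfrac12\bigl(f(T) + (C\overline{f})(T)^*\bigr) = \varphi(f|_{\partial\Omega}) = P_H \pi(f|_{\partial\Omega})\big|_H = P_H f(N)\big|_H,
\end{equation*}
which is the assertion after multiplying by $2$.

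There is no genuinely hard step here; the real content is packed into Theorem~\ref{thm:main} and the Putinar--Sandberg theorem. The two points that need a little care are: checking that compressing the infinite-dimensional dilation really produces a u.c.p.\ map on $S$ implementing the ``twisted'' functional $f \mapsto \tfrac12(f(T) + (C\overline{f})(T)^*)$ rather than simply $f \mapsto f(T)$ as in the polynomial examples; and observing that although the coordinate function $z$ need not lie in $S$, it may still be fed into the representation $\pi$ supplied by Theorem~\ref{thm:main} to manufacture the dilating normal operator with spectrum on $\partial\Omega$.
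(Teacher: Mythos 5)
Your proposal is correct, but the route you take to exhibit the u.c.p.\ map $\varphi$ differs from the paper's. You invoke the Putinar--Sandberg dilation theorem itself: you take the infinite-dimensional normal dilation $B$ on $L \supset H$ that it provides, pass to the representation $\sigma\colon C(\partial\Omega)\to B(L)$ induced by the continuous functional calculus of $B$, and obtain $\varphi$ by compression, so complete positivity is automatic. The paper instead bypasses the explicit infinite-dimensional dilation and constructs $\varphi$ directly as integration against the operator-valued measure
\begin{equation*}
d\mu_T(\zeta) = \Re\Bigl(\tfrac{1}{2\pi i}(\zeta - T)^{-1}\,d\zeta\Bigr),
\end{equation*}
using the Riesz--Dunford calculus to write $f(T) + (C\overline{f})(T)^* = 2\int_{\partial\Omega} f\,d\mu_T$, and citing $W(T)\subset\Omega$ to conclude that $\mu_T$ is positive (hence $\varphi$ is u.c.p.\ by the standard characterization of completely positive maps on $C(X)$). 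From that point on---applying Theorem~\ref{thm:main} to $A=C(\partial\Omega)$ and the same operator system $S$, setting $N = \pi(z)$, and invoking Proposition~\ref{prop:quantitative} for the dimension bound---the two arguments coincide. Your route is the one that most closely parallels the paper's treatment of the Egerv\'ary, \mcc--Shalit, Cohen, and Agler corollaries, where the u.c.p.\ map is always obtained by compressing a known infinite-dimensional dilation. The paper's direct construction makes visible the precise analytic input (positivity of $\mu_T$), which the authors remark is the same ingredient driving current approaches to Crouzeix's conjecture; your version hides that inside the Putinar--Sandberg black box.
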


\begin{proof}
  We apply Theorem \ref{thm:main} with $A = C(\partial \Omega)$ and
  \begin{equation*}
    S = \spa \{ 1, f, \overline{f}: f \in \mathcal{A} \}  \subset C(\partial \Omega).
  \end{equation*}
  If $f \in A(\Omega)$, then by the Riesz--Dunford functional calculus,
  \begin{align*}
    f(T) + (C \overline{f})(T)^* &=
    \frac{1}{2 \pi i} \int_{\partial \Omega} f(\zeta) (\zeta - T)^{-1} d \zeta
    + \Big( \frac{1}{2 \pi i} \int_{\partial \Omega} \overline{f(\zeta)}
    (\zeta - T)^{-1} d \zeta \Big)^* \\
    &= 2 \int_{\partial \Omega} f(\zeta) d \mu_T(\zeta),
  \end{align*}
  where $\mu_T$ is the operator-valued measure on $\partial \Omega$ given by
  \begin{equation*}
    d \mu_T(\zeta) = \Re \Big( \frac{1}{2 \pi i} (\zeta - T)^{-1} d \zeta \Big).
  \end{equation*}
  The fact that $W(T) \subset \Omega$ implies that $\mu_T$ is a positive measure;
  see \cite[Section 3]{PS05} or \cite[Section 2]{CP17}. Thus, the map
  \begin{equation*}
    \varphi: S \to B(H), \quad f \mapsto \int_{\partial \Omega} f(\zeta) d \mu_T(\zeta),
  \end{equation*}
  is u.c.p.\ (for instance by \cite[Theorem 3.11]{Paulsen02}) and satisfies $2 \varphi(f) = f(T) + (C \overline{f})(T)^*$ for all
  $f \in \mathcal{A}$. By Theorem \ref{thm:main}, there exists a finite-dimensional
  Hilbert space $K \supset H$ and a representation $\pi: C(\partial \Omega) \to B(K)$
  dilating $\varphi$. Let $N = \pi(z)$.
  Then $N$ is a normal operator with $\sigma(N) \subset \partial \Omega$ and
  \begin{equation*}
    f(T) + (C \overline{f})(T)^*
    = 2 P_H \pi(f) \big|_H = 2 P_H f(N) \big|_H
  \end{equation*}
  for all $f \in \mathcal{A}$. Proposition \ref{prop:quantitative} yields the dimension bound.
\end{proof}

It seems worth remarking that positivity of the operator-valued measure $\mu_T$ in the above
proof plays a crucial role in most of the current approaches to Crouzeix's conjecture; see \cite{RS18}
for a very clear explanation. The arguments of Putinar and Sandberg \cite[Section 3]{PS05}
and the above proof show that this is actually closely related to a dilation result by the Arveson--Stinespring
dilation theorem.

\subsection{\texorpdfstring{$q$}{q}-commuting contractions}

We finish this section with an application in which the $C^*$-algebra $A$ in Theorem \ref{thm:main}
is non-commutative. Let $q$ be a complex number of modulus one.
Two operators $T_1,T_2$ on $H$ are said to be \emph{$q$-commuting} if
\begin{equation*}
  T_2 T_1 = q T_1 T_2.
\end{equation*}
In particular, if $q=-1$, then $T_1$ and $T_2$ anti-commute. It was shown by Keshari and Mallick \cite{KM19},
extending previous work of Sebesty\'en \cite{Sebestyen94}, that any pair of $q$-commuting contractions
dilates to a pair of $q$-commuting unitaries. We can also establish a finite-dimensional
version of their dilation theorem.

\begin{cor}
  \label{cor:q_commuting}
  Let $q = \exp(2 \pi i a/b)$, where $a \in \mathbb{Z}$ and $b \in \mathbb{N} \setminus \{0\}$.
  Let $T_1,T_2$ be $q$-commuting contractions on a finite-dimensional Hilbert space $H$.
  Let $N \in \bN$. Then there exist a finite-dimensional Hilbert space $K \supset H$ and
  $q$-commuting unitaries $U_1,U_2$ on $K$ so that
  \begin{equation*}
    T_1^n T_2^m = P_H U_1^n U_2^m \big|_{H} \quad \text{ for all } 0 \le m,n \le N.
  \end{equation*}
  We may achieve that $\dim(K) \le 2 b^2 \dim({H})^3 (N+1)^2$.
\end{cor}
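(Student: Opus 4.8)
The plan is to repeat the template used for the commutative corollaries above, this time taking $A$ to be the rational rotation algebra $A_q$, that is, the universal unital $C^*$-algebra generated by two unitaries $u_1, u_2$ subject to $u_2 u_1 = q\, u_1 u_2$, and using the dilation theorem of Keshari and Mallick \cite{KM19} as the infinite-dimensional input.

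First I would record the one structural fact that really uses the hypothesis that $q$ is a root of unity: the algebra $A_q$ is $b$-subhomogeneous. This is classical. For $q = \exp(2\pi i a/b)$ the algebra $A_q$ coincides with the rational rotation algebra $A_{a/b}$, a homogeneous $C^*$-algebra whose irreducible representations, up to unitary equivalence, are parametrized by pairs $(\lambda,\mu) \in \bT^2$ and are implemented on $\bC^b$ by suitably scaled ``clock-and-shift'' matrices; when $\gcd(a,b) > 1$ the irreducible dimension is even smaller, but in any case it is at most $b$. In particular $A_q$ is unital and FDI, so both Theorem \ref{thm:main} and the quantitative bound of Proposition \ref{prop:quantitative} apply with $r = b$.

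Next, given $q$-commuting contractions $T_1, T_2$ on $H$, I would invoke \cite{KM19} to obtain $q$-commuting unitaries $V_1, V_2$ on a (generally infinite-dimensional) Hilbert space $L \supset H$ with $T_1^n T_2^m = P_H V_1^n V_2^m \big|_H$ for all $n, m \ge 0$. The universal property of $A_q$ then gives a unital $*$-representation $\sigma : A_q \to B(L)$ with $\sigma(u_i) = V_i$. Set
\begin{equation*}
  S = \spa\{ 1,\ u_1^n u_2^m,\ (u_1^n u_2^m)^* : 0 \le n, m \le N \} \subset A_q,
\end{equation*}
which is a finite-dimensional operator system with $\dim(S) \le 2(N+1)^2 - 1$ (the adjoint of $u_1^n u_2^m$ is a scalar multiple of the monomial $u_1^{-n} u_2^{-m}$, and these overlap the monomials with non-negative exponents only in the unit), and define $\varphi : S \to B(H)$ by $\varphi(s) = P_H \sigma(s) \big|_H$. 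Then $\varphi$ is u.c.p.\ and $\varphi(u_1^n u_2^m) = T_1^n T_2^m$ for $0 \le n, m \le N$.

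Finally, Theorem \ref{thm:main} yields a finite-dimensional Hilbert space $K \supset H$ and a representation $\pi : A_q \to B(K)$ dilating $\varphi$; putting $U_i = \pi(u_i)$ gives a pair of $q$-commuting unitaries on $K$ with $T_1^n T_2^m = \varphi(u_1^n u_2^m) = P_H U_1^n U_2^m \big|_H$ for $0 \le n, m \le N$. Since $A_q$ is $b$-subhomogeneous and $\dim(S) + 1 \le 2(N+1)^2$, Proposition \ref{prop:quantitative} gives $\dim(K) \le 2 b^2 \dim(H)^3 (N+1)^2$. Apart from the subhomogeneity of $A_q$, every step is a direct transcription of the arguments already carried out in the commutative cases; I expect the subhomogeneity input to be the only non-formal point, and it is precisely there that one sees why the statement must fail for irrational $q$, where $A_q$ is simple and infinite-dimensional, hence not FDI.
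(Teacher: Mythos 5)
Your proof is correct and follows essentially the same route as the paper: it identifies $A_q$ with the rational rotation algebra $\mathcal{A}_{a/b}$, uses its $b$-subhomogeneity (hence FDI) together with the Keshari--Mallick dilation to build the u.c.p.\ map on the operator system $S = \operatorname{span}\{u_1^n u_2^m, (u_1^n u_2^m)^* : 0 \le n,m \le N\}$, and then applies Theorem \ref{thm:main} and Proposition \ref{prop:quantitative}. The only cosmetic difference is that the paper cites a specific reference (De Brabanter) for the subhomogeneity fact rather than sketching the irreducible representation theory of $\mathcal{A}_{a/b}$.
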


\begin{proof}
  By \cite[Theorem 2.3]{KM19}, there exist a Hilbert space $L \supset H$ and $q$-commuting unitaries
  $V_1,V_2$ on $L$ so that
  \begin{equation*}
    T_1^n T_2^m = P_H V_1^n V_2^m \big|_H \quad \text{ for all } n,m \in \bN.
  \end{equation*}
  Let $\cA_{a/b}$ be the rational rotation algebra, that is, the universal $C^*$-algebra
  generated by two $q$-commuting unitaries $u_1,u_2$. By \cite[Proposition 1]{DeBrabanter84}, $\cA_{a/b}$
  is $b$-subhomogeneous, and in particular FDI. The universal property of $\cA_{a/b}$
  yields a representation $\sigma: \cA_{a/b} \to B(L)$ with
  $\sigma(u_1) = V_1$ and $\sigma(u_2) = V_2$. Let
  \begin{equation*}
    S = \spa \{ u_1^n u_2^m, u_2^{-m} u_1^{-n} : 0 \le n,m \le N \} \subset \cA_{a/b}
  \end{equation*}
  and let
  \begin{equation*}
    \varphi: S \to B(H), \quad a \mapsto P_H \sigma(a) \big|_H.
  \end{equation*}
  Then $\varphi$ is u.c.p.\ and
  \begin{equation*}
    \varphi(u_1^n u_2^m) = P_H V_1^n V_2^m \big|_H = T_1^n T_2^m \quad \text{ for all } 0 \le n,m \le N.
  \end{equation*}
  By Theorem \ref{thm:main}, the u.c.p.\ map $\varphi$ dilates to a finite-dimensional
  representation $\pi: \cA_{a/b} \to B(K)$. Let $U_1 = \pi(u_1)$ and $U_2 = \pi(u_2)$. Then
  $U_1,U_2$ are $q$-commuting unitaries on a finite-dimensional Hilbert space and
  \begin{equation*}
    T_1^n T_2^m = \varphi(u_1^n u_2^m) = P_H \pi(u_1^n u_2^m) \big|_H
    = P_H U_1^n U_2^m \big|_H
  \end{equation*}
  for all $0 \le n,m \le N$. The dimension bound follows from Proposition \ref{prop:quantitative},
  as $\dim(S) \le 2 (N+1)^2 - 1$.
\end{proof}

\begin{rem}
  The rationality assumption in Corollary \ref{cor:q_commuting} is essential. Indeed, if $q = \exp(2 \pi i \theta)$
  with $\theta$ irrational, then there are no $q$-commuting unitaries on a finite-dimensional Hilbert space,
  because the irrational rotation algebra $\cA_\theta$ is simple and infinite-dimensional, see
  \cite[Theorem VI.1.4]{Davidson96}.
  On the other hand, it is easy to construct $q$-commuting contractions on a finite-dimensional
  Hilbert space, for instance
  \begin{equation*}
    T_1 =
    \begin{bmatrix}
      1 & 0 \\ 0 & q
    \end{bmatrix},
    T_2 =
    \begin{bmatrix}
      0 & 1 \\
      0 & 0
    \end{bmatrix}.
  \end{equation*}
  Thus, Corollary \ref{cor:q_commuting} fails without the rationality assumption.

  In other words, the dilation theorem for $q$-commuting contractions has a finite
  dimensional version if and only if $q = \exp(2 \pi i \theta)$ and $\theta$ is rational.
  This fact becomes very transparent on the level of $C^*$-algebras. Rational rotation algebras
  are subhomogeneous, whereas irrational rotation algebras are simple and infinite-dimensional and hence have
  no finite-dimensional representations.
\end{rem}

A similar phenomenon occurs in \cite[Theorem 6.1]{GS19}, where $q$-commuting unitaries
are dilated to $q'$-commuting unitaries.

\bibliographystyle{amsplain}
\bibliography{literature}

\providecommand{\bysame}{\leavevmode\hbox to3em{\hrulefill}\thinspace}
\providecommand{\MR}{\relax\ifhmode\unskip\space\fi MR }
% \MRhref is called by the amsart/book/proc definition of \MR.
\providecommand{\MRhref}[2]{%
  \href{http://www.ams.org/mathscinet-getitem?mr=#1}{#2}
}
\providecommand{\href}[2]{#2}
\begin{thebibliography}{10}

\bibitem{Agler85a}
Jim Agler, \emph{Rational dilation on an annulus}, Ann. of Math. (2)
  \textbf{121} (1985), no.~3, 537--563. \MR{794373}

\bibitem{AHR08}
Jim Agler, John Harland, and Benjamin~J. Raphael, \emph{Classical function
  theory, operator dilation theory, and machine computation on
  multiply-connected domains}, Mem. Amer. Math. Soc. \textbf{191} (2008),
  no.~892, viii+159. \MR{2375060}

\bibitem{ANT19}
Vadim Alekseev, Tim Netzer, and Andreas Thom, \emph{Quadratic modules,
  {$C^*$}-algebras, and free convexity}, Trans. Amer. Math. Soc. \textbf{372}
  (2019), no.~11, 7525--7539. \MR{4029672}

\bibitem{Ando63}
T.~And{\^o}, \emph{On a pair of commutative contractions}, Acta Sci. Math.
  (Szeged) \textbf{24} (1963), 88--90. \MR{0155193 (27 \#5132)}

\bibitem{Arveson69}
William Arveson, \emph{Subalgebras of {$C\sp{\ast} $}-algebras}, Acta Math.
  \textbf{123} (1969), 141--224. \MR{0253059 (40 \#6274)}

\bibitem{Arveson72}
\bysame, \emph{Subalgebras of {$C^{\ast} $}-algebras. {II}}, Acta Math.
  \textbf{128} (1972), no.~3-4, 271--308. \MR{0394232 (52 \#15035)}

\bibitem{Berger65}
C.~A. Berger, \emph{A strange dilation theorem}, Notices Amer. Math. Soc
  \textbf{12} (1965), 590, Abstract 625-152.

\bibitem{BGG+20}
Kelly Bickel, Pamela Gorkin, Anne Greenbaum, Thomas Ransford, Felix
  Schwenninger, and Elias Wegert, \emph{Crouzeix's conjecture and related
  problems}.

\bibitem{Blackadar06}
B.~Blackadar, \emph{Operator algebras}, Encyclopaedia of Mathematical Sciences,
  vol. 122, Springer-Verlag, Berlin, 2006, Theory of $C{\sp{*}}$-algebras and
  von Neumann algebras, Operator Algebras and Non-commutative Geometry, III.
  \MR{2188261 (2006k:46082)}

\bibitem{CGL17}
Trevor Caldwell, Anne Greenbaum, and Kenan Li, \emph{Some extensions of the
  {C}rouzeix-{P}alencia result}, SIAM J. Matrix Anal. Appl. \textbf{39} (2018),
  no.~2, 769--780. \MR{3800230}

\bibitem{Choi75}
Man~Duen Choi, \emph{Completely positive linear maps on complex matrices},
  Linear Algebra and Appl. \textbf{10} (1975), 285--290. \MR{0376726}

\bibitem{Choi80}
\bysame, \emph{The full {$C^{\ast} $}-algebra of the free group on two
  generators}, Pacific J. Math. \textbf{87} (1980), no.~1, 41--48. \MR{590864}

\bibitem{Cohen15}
David Cohen, \emph{Dilations of matricies}, arXiv:1503.07334 (2015).

\bibitem{CS17}
Kristin Courtney and Tatiana Shulman, \emph{Elements of {$C^*$}-algebras
  attaining their norm in a finite-dimensional representation}, Canad. J. Math.
  \textbf{71} (2019), no.~1, 93--111.

\bibitem{CP17}
M.~Crouzeix and C.~Palencia, \emph{The numerical range is a
  {$(1+\sqrt{2})$}-spectral set}, SIAM J. Matrix Anal. Appl. \textbf{38}
  (2017), no.~2, 649--655. \MR{3666309}

\bibitem{Crouzeix07}
Michel Crouzeix, \emph{Numerical range and functional calculus in {H}ilbert
  space}, J. Funct. Anal. \textbf{244} (2007), no.~2, 668--690. \MR{2297040}

\bibitem{Davidson96}
Kenneth~R. Davidson, \emph{{$C\sp *$}-algebras by example}, Fields Institute
  Monographs, vol.~6, American Mathematical Society, Providence, RI, 1996.
  \MR{1402012 (97i:46095)}

\bibitem{DD10}
Kenneth~R. Davidson and Allan~P. Donsig, \emph{Real analysis and applications},
  Undergraduate Texts in Mathematics, Springer, New York, 2010, Theory in
  practice. \MR{2568574 (2010i:26002)}

\bibitem{DDS+16}
Kenneth~R. Davidson, Adam Dor-On, Orr Shalit, and Baruch Solel,
  \emph{Dilations, inclusions of matrix convex sets, and completely positive
  maps}, Int. Math. Res. Not. IMRN (2017), no.~13, 4069--4130.

\bibitem{DK15}
Kenneth~R. Davidson and Matthew Kennedy, \emph{The {C}hoquet boundary of an
  operator system}, Duke Math. J. \textbf{164} (2015), no.~15, 2989--3004.
  \MR{3430455}

\bibitem{DeBrabanter84}
Marc De~Brabanter, \emph{The classification of rational rotation {$C^{\ast}
  $}-algebras}, Arch. Math. (Basel) \textbf{43} (1984), no.~1, 79--83.
  \MR{758343}

\bibitem{DM05a}
Michael~A. Dritschel and Scott McCullough, \emph{The failure of rational
  dilation on a triply connected domain}, J. Amer. Math. Soc. \textbf{18}
  (2005), no.~4, 873--918. \MR{2163865 (2008i:47024)}

\bibitem{Effros1997}
Edward~G. Effros and Soren Winkler, \emph{Matrix convexity: operator analogues
  of the bipolar and {H}ahn-{B}anach theorems}, J. Funct. Anal. \textbf{144}
  (1997), no.~1, 117--152. \MR{1430718}

\bibitem{Egervary54}
E.~Egerv\'ary, \emph{On the contractive linear transformations of
  {$n$}-dimensional vector space}, Acta Sci. Math. Szeged \textbf{15} (1954),
  178--182. \MR{0064737}

\bibitem{EH19}
Eric Evert and J.~William Helton, \emph{Arveson extreme points span free
  spectrahedra}, Mathematische Annalen \textbf{375} (2019), no.~1-2, 629--653.

\bibitem{EHK+18}
Eric Evert, J.~William Helton, Igor Klep, and Scott McCullough, \emph{Extreme
  points of matrix convex sets, free spectrahedra, and dilation theory}, J.
  Geom. Anal. \textbf{28} (2018), no.~2, 1373--1408. \MR{3790504}

\bibitem{Farenick92}
D.~R. Farenick, \emph{{$C^*$}-convexity and matricial ranges}, Canad. J. Math.
  \textbf{44} (1992), no.~2, 280--297. \MR{1162344}

\bibitem{Farenick00}
Douglas~R. Farenick, \emph{Extremal matrix states on operator systems}, J.
  London Math. Soc. (2) \textbf{61} (2000), no.~3, 885--892. \MR{1766112}

\bibitem{FHL16}
Adam~H. Fuller, Michael Hartz, and Martino Lupini, \emph{Boundary
  representations of operator spaces, and compact rectangular matrix convex
  sets}, J. Operator Theory \textbf{79} (2018), no.~1, 139--172.

\bibitem{GS19}
Malte Gerhold and Orr Shalit, \emph{Dilations of $q$-commuting unitaries},
  arXiv:1902.10362 (2019).

\bibitem{HKM16}
J.~William Helton, Igor Klep, and Scott McCullough, \emph{Matrix convex hulls
  of free semialgebraic sets}, Trans. Amer. Math. Soc. \textbf{368} (2016),
  no.~5, 3105--3139. \MR{3451871}

\bibitem{HM12}
J.~William Helton and Scott McCullough, \emph{Every convex free basic
  semi-algebraic set has an {LMI} representation}, Ann. of Math. (2)
  \textbf{176} (2012), no.~2, 979--1013. \MR{2950768}

\bibitem{KM19}
Dinesh~Kumar Keshari and Nirupama Mallick, \emph{{$q$}-commuting dilation},
  Proc. Amer. Math. Soc. \textbf{147} (2019), no.~2, 655--669. \MR{3894905}

\bibitem{Kriel18}
Tom-Lukas Kriel, \emph{An introduction to matrix convex sets and free
  spectrahedra}, Complex Anal. Oper. Theory \textbf{13} (2019), no.~7,
  3251--3335.

\bibitem{LM18}
Jeremy Levick and Robert T.~W. Martin, \emph{Matrix n-dilations of quantum
  channels}, Oper. Matrices \textbf{12} (2018), no.~4, 977--995.

\bibitem{LS14}
Eliahu Levy and Orr~Moshe Shalit, \emph{Dilation theory in finite dimensions:
  the possible, the impossible and the unknown}, Rocky Mountain J. Math.
  \textbf{44} (2014), no.~1, 203--221. \MR{3216017}

\bibitem{MS13}
John~E. McCarthy and Orr~Moshe Shalit, \emph{Unitary {$N$}-dilations for tuples
  of commuting matrices}, Proc. Amer. Math. Soc. \textbf{141} (2013), no.~2,
  563--571. \MR{2996961}

\bibitem{Morenz94}
Phillip~B. Morenz, \emph{The structure of {$C^\ast$}-convex sets}, Canad. J.
  Math. \textbf{46} (1994), no.~5, 1007--1026. \MR{1295129}

\bibitem{Mueller07}
Vladimir M{\"u}ller, \emph{Spectral theory of linear operators and spectral
  systems in {B}anach algebras}, second ed., Operator Theory: Advances and
  Applications, vol. 139, Birkh\"auser Verlag, Basel, 2007. \MR{2355630
  (2008g:47013)}

\bibitem{OA75}
Kazuyoshi Okubo and Tsuyoshi Ando, \emph{Constants related to operators of
  class {$C_{\rho }$}}, Manuscripta Math. \textbf{16} (1975), no.~4, 385--394.
  \MR{0377562}

\bibitem{Paulsen02}
Vern Paulsen, \emph{Completely bounded maps and operator algebras}, Cambridge
  Studies in Advanced Mathematics, vol.~78, Cambridge University Press,
  Cambridge, 2002. \MR{1976867 (2004c:46118)}

\bibitem{PS05}
Mihai Putinar and Sebastian Sandberg, \emph{A skew normal dilation on the
  numerical range of an operator}, Math. Ann. \textbf{331} (2005), no.~2,
  345--357. \MR{2115459}

\bibitem{RS18}
Thomas Ransford and Felix~L. Schwenninger, \emph{Remarks on the
  {C}rouzeix-{P}alencia proof that the numerical range is a
  {$(1+\sqrt2)$}-spectral set}, SIAM J. Matrix Anal. Appl. \textbf{39} (2018),
  no.~1, 342--345. \MR{3769702}

\bibitem{Sebestyen94}
Zolt\'an Sebesty\'en, \emph{Anticommutant lifting and anticommuting dilation},
  Proc. Amer. Math. Soc. \textbf{121} (1994), no.~1, 133--136. \MR{1176485}

\bibitem{Sz.-Nagy53}
B{\'e}la Sz.-Nagy, \emph{Sur les contractions de l'espace de {H}ilbert}, Acta
  Sci. Math. Szeged \textbf{15} (1953), 87--92. \MR{0058128 (15,326d)}

\bibitem{SFB+10}
B{\'e}la Sz.-Nagy, Ciprian Foias, Hari Bercovici, and L{\'a}szl{\'o}
  K{\'e}rchy, \emph{Harmonic analysis of operators on {H}ilbert space}, second
  ed., Universitext, Springer, New York, 2010. \MR{2760647 (2012b:47001)}

\bibitem{WW99}
Corran Webster and Soren Winkler, \emph{The {K}rein-{M}ilman theorem in
  operator convexity}, Trans. Amer. Math. Soc. \textbf{351} (1999), no.~1,
  307--322. \MR{1615970}

\bibitem{Wittstock1984}
Gerd Wittstock, \emph{On matrix order and convexity}, Functional analysis:
  surveys and recent results, {III} ({P}aderborn, 1983), North-Holland Math.
  Stud., vol.~90, North-Holland, Amsterdam, 1984, pp.~175--188. \MR{761380}

\end{thebibliography}

\end{document}